\documentclass[11pt,a4paper]{article}
\usepackage{geometry}\geometry{margin=1in}
\usepackage[english]{babel}
\usepackage{amssymb}
\usepackage{enumitem}
\usepackage{latexsym}
\usepackage[all,cmtip]{xy}
\usepackage{amsmath,amsthm}
\usepackage{comment}
\usepackage{tikz}
\usepackage{marvosym}
\usepackage{tabu}
\usepackage{hyperref}
\usepackage{color}


\usepackage{etoolbox}
 
\makeatletter
\patchcmd{\maketitle}{\@fnsymbol}{\@alph}{}{}  
\makeatother
 
\title{Morita equivalence classes of blocks with elementary abelian defect groups of order $32$}
\author{
   Cesare Giulio Ardito\thanks{Department of Mathematics, City University of London, Northampton Square, London, EC1V 0HB, United Kingdom. Email: cesareg.ardito@gmail.com}
}
\date{\today}



\newtheorem{theorem}{Theorem}[section]
\newtheorem*{theorem*}{Theorem}

\newtheorem{method}[theorem]{Method}
\newtheorem{lemma}[theorem]{Lemma}
\newtheorem{corollary}[theorem]{Corollary}
\newtheorem{proposition}[theorem]{Proposition}
\newtheorem{remark}[theorem]{Remark}


\newcommand{\Irr}{\operatorname{Irr}}

\newcommand{\Hom}{\mathop{\rm Hom}\nolimits}

\newcommand{\Aut}{\operatorname{Aut}}
\newcommand{\Out}{\operatorname{Out}}

\newcommand{\Pic}{\mathop{\rm Pic}\nolimits}


\newcommand{\SL}{\operatorname{SL}}

\newcommand{\GL}{\operatorname{GL}}

\newcommand{\PGL}{\rm PGL}
\newcommand{\PSL}{\rm PSL}

\newcommand{\cO} {\mathcal{O}}
\newcommand{\cT} {\mathcal{T}}

\newcommand{\cE} {\mathcal{E}}

\newcommand{\K} {K}

\def\bigcp{\mathop{\mathchoice 
 {\hbox{\sf\Large\lower 0.1\baselineskip\hbox{Y}}}%
 {\hbox{\sf\large\lower 0.1\baselineskip\hbox{Y}}}%
 {\hbox{\sf\normalsize\lower 0.1\baselineskip\hbox{Y}}}%
 {\hbox{\sf\tiny\lower 0.1\baselineskip\hbox{Y}}}%
}}
\def\bigtimes{\mathop{\mathchoice 
 {\hbox{\sf\Large\lower 0.1\baselineskip\hbox{X}}}%
 {\hbox{\sf\large\lower 0.1\baselineskip\hbox{X}}}%
 {\hbox{\sf\normalsize\lower 0.1\baselineskip\hbox{X}}}%
 {\hbox{\sf\tiny\lower 0.1\baselineskip\hbox{X}}}%
}}

\def\Sym(#1){\mathop{\rm Sym}(#1)}
\def\Sym(#1){S_{#1}}
\def\diag(#1){\mathop{\rm diag}(#1)}

\begin{document}\setcounter{MaxMatrixCols}{50}
\maketitle \vspace*{-1em}
\begin{abstract}
We describe a general technique to classify blocks of finite groups, and we apply it to determine Morita equivalence classes of blocks with elementary abelian defect groups of order $32$ with respect to a complete discrete valuation ring with an algebraically closed residue field of characteristic two. As a consequence we verify that a conjecture of Harada holds on these blocks. 

Keywords: Donovan's conjecture; Finite groups; Morita equivalence; Block theory; Modular representation theory.

2010 Mathematics Subject Classification: Primary 20C20; Secondary 16D90, 20C05.
\end{abstract}

\normalsize

\section{Introduction}
Let $\cO$ be a complete discrete valuation ring, with field of fractions $K$ of characteristic $0$ and residue field $k$, an algebraically closed field of characteristic $p$. Note that $K$ and $k$ cannot both be algebraically closed, but we can assume $K$ to be large enough for all finite groups considered in this paper. The triple $(K,\cO, k)$ is usually called a \emph{$p$-modular system}.

We say that two algebras $A$ and $B$ are Morita equivalent if there is an equivalence between the categories of $A$-modules and $B$-modules. Given a finite group $G$, we consider blocks of the group algebras $\cO G$ and $kG$. Given a block $B$ of $\cO G$, there is a unique correspondent block $\overline{B}$ of $kG$ via the canonical map $B \mapsto \overline{B} = B \otimes_\cO k$. Moreover, a Morita equivalence between blocks of group algebras over $\cO$ implies a Morita equivalence between the same blocks of the group algebras over $k$, while the converse is not known to be true. Hence, classifying blocks over $\cO$ is, in general, harder than classifying blocks over $k$. 

For a $p$-block $B$ of $\cO G$ we consider the defect group $D$, a $p$-subgroup of $G$ defined up to conjugation as the unique maximal group among the vertices of the modules in the block $B$. A $B$-subpair is a pair $(Q,B_Q)$ where $Q$ is a subgroup of $D$ and $B_Q$ is a block of $\cO Q C_G(Q)$ with Brauer correspondent $B$. When $Q=D$, the $B$-subpairs $(D,B_D)$ are all $G$-conjugate and we write $e:=B_D$, often called a \textit{root} of $B$. The inertial quotient of $B$ is defined as $E=N_G(D,e)/C_G(D)$, where $N_G(D,e)$ denotes the stabilizer of $e$ in $N_G(D)$. $E$ is always a $p'$-group and, when $D$ is abelian, $B$ is nilpotent if and only if $E=1$. 

We denote the number of irreducible characters of $KG$ in the block $B$ as $k(B)$, and the number of irreducible Brauer characters of $kG$ in the block $B$ as $l(B)$. Moreover, we denote by $\mathcal{F}$ the fusion system of $D$ given by the block $B$ (see \cite[8.1]{lin18}). For a group algebra $\cO G$, we denote as $B_0(\cO G)$ the principal block, i.e. the one that contains the trivial character. 

Donovan's conjecture states that for each isomorphism class of a $p$-group $D$ there is a finite number of Morita equivalence classes of blocks of finite groups with defect group $D$. However, note that defect groups are not known to be invariant under Morita equivalence, and inertial quotients, in general, are not invariant \cite{dade71}. Donovan's conjecture has been proved over $k$ for elementary abelian $2$-groups in \cite{ekks}, later generalized to abelian $2$-groups in \cite{eali18c} over $k$ and in \cite{ealiei18} over $\cO$. However, the proofs do not produce an explicit list of all the classes for each fixed defect group. Our purpose is to describe the Morita equivalence classes of blocks with defect group $(C_2)^5$.

\begin{theorem}
\label{maintheorem}
Let $G$ be a finite group, and let $B$ be a block of $\cO G$ with elementary abelian defect group $D$ of order $32$. Then one of the following two possibilities occurs: \begin{itemize}

\item[$\star$)] $B$ is Morita equivalent to the principal block of precisely one of the following groups: \vskip0.5em
\begin{tabular}{rlll}
(i) & 	$(C_2)^5$ &\qquad \qquad & (inertial quotient $\{1\}$) \\
(ii) & 	$A_4 \times (C_2)^3$  &&  (i.q. $(C_3)_1$) \\
(iii)&	$A_5 \times (C_2)^3$  &&  (i.q. $(C_3)_1$) \\
(iv) & 	$((C_2)^4 \rtimes C_3) \times C_2$  &&  (i.q. $(C_3)_2$) \\
(v) & 	$((C_2)^4 \rtimes C_5) \times C_2$  \\ 
(vi) & 	$((C_2)^3 \rtimes C_7) \times (C_2)^2$  &&  (i.q. $C_7$) \\
(vii) & $SL_2(8) \times (C_2)^2$   &&  (i.q. $C_7$) \\
(viii)  & $A_4 \times A_4 \times C_2$   &&  (i.q. $C_3 \times C_3$) \\
(ix) &	$A_4 \times A_5 \times C_2$    &&  (i.q. $C_3 \times C_3$) \\
(x) &	$A_5 \times A_5 \times C_2$    &&  (i.q. $C_3 \times C_3$) \\
(xi)&	$((C_2)^4 \rtimes C_{15}) \times C_2$  \\  
(xii) & $SL_2(16) \times C_2$  \\  
(xiii) & $((C_2)^3 \rtimes C_7) \times A_4$   &&  (i.q. $C_{21}$)\\
(xiv)&	 $((C_2)^3 \rtimes C_7) \times A_5$   &&  (i.q. $C_{21}$) \\
(xv)&	 $SL_2(8) \times A_4$   &&  (i.q. $C_{21}$) \\
(xvi)&	 $SL_2(8) \times A_5$    &&  (i.q. $C_{21}$) \\
(xvii)&	 $((C_2)^3 \rtimes (C_7 \rtimes C_3)) \times (C_2)^2$   &&  (i.q. $(C_7 \rtimes C_3)_1$) \\
(xviii)& $J_1 \times (C_2)^2$    &&  (i.q. $(C_7 \rtimes C_3)_1$) \\
(xix)&	 $\Aut(SL_2(8)) \times (C_2)^2$   &&  (i.q. $(C_7 \rtimes C_3)_1$) \\
(xx) &	 $(C_2)^5 \rtimes (C_7 \rtimes C_3)$   &&  (i.q. $(C_7 \rtimes C_3)_2$) \\
(xxi)&	 $(SL_2(8) \times (C_2)^2) \rtimes C_3$   &&  (i.q. $(C_7 \rtimes C_3)_2$) \\
(xxii)&	 $(C_2)^5 \rtimes C_{31}$   &&  (i.q. $C_{31}$) \\
(xxiii)& $SL_2(32)$  &&  (i.q. $C_{31}$) \\
(xxiv)& $((C_2)^3 \rtimes (C_7 \rtimes C_3)) \times A_4$   &&  (i.q. $(C_7 \rtimes C_3) \times C_3$) \\
(xxv)& $((C_2)^3 \rtimes (C_7 \rtimes C_3)) \times A_5$   &&  (i.q. $(C_7 \rtimes C_3) \times C_3$) \\
(xxvi) & $J_1 \times A_4$    &&  (i.q. $(C_7 \rtimes C_3) \times C_3$)\\
(xxvii) & $J_1 \times A_5$   &&  (i.q. $(C_7 \rtimes C_3) \times C_3$) \\
(xxviii)& $\Aut(SL_2(8)) \times A_4$   &&  (i.q. $(C_7 \rtimes C_3) \times C_3$) \\
(xxix) & $\Aut(SL_2(8)) \times A_5$   &&  (i.q. $(C_7 \rtimes C_3) \times C_3$) \\
(xxx) & $(C_2)^5 \rtimes (C_{31} \rtimes C_5)$    &&  (i.q. $C_{31} \rtimes C_5$) \\
(xxxi) & $\Aut(SL_2(32))$  &\qquad \qquad&  (i.q. $C_{31} \rtimes C_5$)
\end{tabular} \vskip1em
\noindent 
\item[$\star\star$)] $B$ is Morita equivalent to a nonprincipal block of one of the following groups (there is exactly one such Morita equivalence class for each group): \vskip0.5em
\begin{tabular}{rlll}
\qquad (a) & $((C_2)^4 \rtimes 3_+^{1+2}) \times C_2$  &\qquad \qquad & (i.q. $C_3 \times C_3$) \\
(b) & $(C_2)^5 \rtimes (C_7 \rtimes 3_+^{1+2})$ && (i.q. $(C_7 \rtimes C_3) \times C_3$) \\
(c) & $(SL_2(8) \times (C_2)^2) \rtimes  3_+^{1+2}$  && (i.q. $(C_7 \rtimes C_3) \times C_3$)
\end{tabular} \end{itemize}
\noindent Moreover, if a block $C$ of $\cO H$ for a finite group $H$ is Morita equivalent to $B$, then the defect group of $C$ is isomorphic to $D$.
\end{theorem}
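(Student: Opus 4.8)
The plan is to run, for rank $5$, the programme that has worked for elementary abelian defect groups of rank at most four: reduce to almost simple and closely related groups using the structure theory of blocks with abelian defect, combine this with the classification of finite simple groups and explicit computations of block invariants, and finally lift from $k$ to $\cO$. \textbf{Step 1 (fusion systems and inertial quotients).} Since $D\cong(C_2)^5$ is abelian, every fusion system $\cF$ coming from a block is saturated and is determined by its essential subgroups together with $N_\cF(D)=D\rtimes E$, where the inertial quotient $E$ is an odd-order subgroup of $\Aut(D)\cong\GL_5(2)$ acting on $\mathbb{F}_2^5$. First I would list up to conjugacy the odd-order subgroups of $\GL_5(2)$ (their orders divide $|\GL_5(2)|_{2'}=3^2\cdot 5\cdot 7\cdot 31$, and they are built from $C_3$, $C_5$, $C_7$, $C_{15}$, $C_{21}$, $C_{31}$, $C_7\rtimes C_3$, $C_{31}\rtimes C_5$ and $C_3\times C_3$ as Singer-type cycles and their normalisers), record for each the decomposition of $\mathbb{F}_2^5$ into indecomposable $\mathbb{F}_2E$-summands — this is exactly what later lets trivial direct factors $C_2$ be split off — and then enumerate the saturated fusion systems on $D$ with that inertial quotient; this produces the ``i.q.'' column and, in particular, the split into the two $C_3$-classes and the two $(C_7\rtimes C_3)$-classes.

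\textbf{Step 2 (reduction to products of quasisimple groups).} For each fusion system I would apply the reduction machinery for abelian $2$-blocks. Fong--Reynolds reduction together with the Külshammer--Puig theory of blocks with normal defect group disposes of the nilpotent and inertial cases, yielding the group-algebra entries (i), (ii), (iv), (v), (vi), (xi), (xvii), (xx), (xxii), (xxx) and the twisted entry (a), the last via the non-trivial Külshammer--Puig class on $C_3\times C_3$ realised by $3_+^{1+2}$. The reductions used in \cite{ekks}, \cite{eali18c} and \cite{ealiei18} (due to Eaton, Eaton--Kessar--Külshammer--Sambale and Eaton--Livesey) then reduce the remaining cases to blocks of finite groups $G$ with $O_{2'}(G)=1$ generated by the $G$-conjugates of a defect group; analysing $F^*(G)$ and invoking the classification of finite simple groups whose Sylow $2$-subgroups embed suitably into $(C_2)^5$, these reduce further to principal blocks of products of $A_4$, $A_5$, $\SL_2(8)$, $\SL_2(16)$, $\SL_2(32)$, $J_1$, $\Aut(\SL_2(8))$ and $\Aut(\SL_2(32))$. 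The Morita types of the relevant simple-group blocks I would take from (or compute as in) the existing literature on $2$-blocks of $\SL_2(2^n)$ and of $J_1$; the twisted classes (b), (c) appear as non-principal blocks of the corresponding extensions by $3_+^{1+2}$.

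\textbf{Step 3 (lifting to $\cO$ and non-redundancy).} Since \cite{ekks} already gives finiteness over $k$, it remains to check that each Morita equivalence obtained lifts from $k$ to $\cO$ — they are all of a kind known to lift, being splendid or basic equivalences, equivalences induced by explicit $\cO$-free bimodules, or equivalences forced by uniqueness of the relevant basic algebra (including the finite- and tame-representation-type cases) — and to show the $34$ classes are pairwise distinct. For the latter I would separate first by $\cF$, and then within a fixed $\cF$ by $k(B)$, $l(B)$, the elementary divisors and shape of the Cartan matrix, and the decomposition matrix, using the Külshammer--Puig cohomology class (equivalently the twisted source algebra) to separate (a), (b), (c) from their principal-block counterparts.

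\textbf{Step 4 (the defect-group statement and the main obstacle).} For the closing assertion, if a block $C$ of $\cO H$ is Morita equivalent to $B$ then $|D(C)|=32$, since the Cartan matrix, hence its elementary divisors, is a Morita invariant; moreover the quiver of the basic algebra is a Morita invariant, so in the nilpotent case $\dim_k\operatorname{Ext}^1$ recovers the rank of $D(C)$ and forces $D(C)\cong(C_2)^5$, and in the inertial cases the analogous analysis of the basic algebra (its identification with $\cO_\alpha[Q\rtimes E]$ and the $p$-power map surviving in the Morita-invariant centre) does the same, while for the remaining, simple-group-derived classes $D(C)\cong(C_2)^5$ follows from the invariants just computed together with the known lists of $2$-blocks with defect group of order $32$. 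I expect the main obstacle to be the size and novelty of the rank-$5$ analysis: the inertial quotients $C_{31}$ and $C_{31}\rtimes C_5$ and the groups $\SL_2(32)$ and $\Aut(\SL_2(32))$ occur here for the first time and require genuinely new block-theoretic and lifting input, and keeping the enumeration of fusion systems, their group realisations, the non-redundancy checks and the defect-group argument simultaneously complete and correct is where the real difficulty lies.
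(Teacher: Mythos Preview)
Your overall strategy (enumerate inertial quotients, reduce via $F^*(G)$ to products of quasisimple groups, invoke the classification of quasisimple blocks from \cite{ekks}) is broadly the same as the paper's. But there are two substantive gaps.

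\textbf{The lifting step is the whole point.} You write that the Morita equivalences ``are all of a kind known to lift, being splendid or basic equivalences \ldots''. This is exactly the obstacle, not a triviality. The Koshitani--K\"ulshammer splitting $B\cong b\otimes kQ$ for an index-$2^i$ normal subgroup only holds over $k$, and there is no general mechanism that upgrades this isomorphism to $\cO$. The paper does \emph{not} classify over $k$ and then lift; it works over $\cO$ throughout, and its main technical contribution (Section~3) is precisely an $\cO$-analogue of Koshitani--K\"ulshammer for the inertial quotients that occur here with $C_D(E)\neq 1$. This is done by constructing $(G,B)$-local systems in the sense of Puig--Usami (Proposition~\ref{gblocalnoncyclic}), producing perfect isometries compatible with the Brou\'e--Puig $*$-construction, and then feeding these into a lemma of Watanabe (Lemma~\ref{wat00}) to obtain $B\cong \cO Q\otimes_\cO b$ as $\cO$-algebras (Proposition~\ref{index2}). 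Your proposal contains no replacement for this.

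\textbf{The odd-index step needs crossed products and Picard groups.} After reaching $F^*(G)$ you still have to climb from the block of $F^*(G)$ (or of $G[b^*]$) up to $B$ through a chain of odd-prime-index extensions. Your Step~2 waves at ``the reductions used in \cite{ekks}, \cite{eali18c} and \cite{ealiei18}'', but those papers give finiteness, not an explicit enumeration. The paper's engine here is Method~\ref{clubsuit}: realise $B$ as a crossed product of the basic algebra of $b$ with $G/N$, parametrise the weak equivalence classes of such crossed products by K\"ulshammer's pairs $(\omega,\zeta)$, and control $\omega$ by embedding $G/N$ into $\cT(b)\leq\Pic(b)$. This requires knowing (or bounding) the relevant Picard groups over $\cO$ (Proposition~\ref{taugroups}, using \cite{bkl17}, \cite{eali18}, \cite{liv19}), and then an explicit case analysis (Propositions~\ref{a4ora5}--\ref{mixed}) exhibiting a group realising each admissible $\omega$. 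None of this machinery appears in your plan, and without it you cannot justify that the list is complete.

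Two smaller points: the distinctness of the $34$ classes is handled in the paper simply by Cartan matrices (plus $k(B)$ to separate (i) from (a)); and the closing defect-group assertion is a one-line citation of Linckelmann \cite{lin18p}, not the ad hoc argument via quivers and $p$-power maps you sketch.
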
 

In Section 2, we list preliminary results and some reductions that we use in the proof of the main theorem. In Section 3 we look at perfect isometries between certain blocks, which we use to extend the main theorem of \cite{kk96} over $\cO$. This allows us to examine blocks of groups that cover a block of a normal subgroup with index a power of two. In Section 4 we give background on crossed products and Picard groups, which allow us to examine blocks of groups that cover a block of a normal subgroup with index an odd prime, and apply this method to study some cases that arise when proving our main result. In Section 5 we prove our main theorem, list all the classes and investigate their invariants, and then verify a conjecture of Harada for these blocks.

Detailed information about each class of blocks is available on the Block Library \cite{ea20}.

\section{Reductions and technical lemmas}

Given a normal subgroup $N \lhd G$, $B$ a block of $\cO G$ and $b$ a block of $\cO N$, we say that $B$ \textit{covers} $b$ when $Bb \neq 0$. The structures of $B$ and $b$ are closely related in this case. For instance, as shown in \cite[15.1]{alp151}, a defect group of $b$ is the intersection of a defect group of $B$ with $N$. This relation is the main tool that we use to obtain our classification.

As we mentioned in the introduction, two blocks $B$ and $C$ of finite groups are Morita equivalent if their module categories are equivalent. An alternative, more explicit way to define this equivalence is to say that there is a $(B,C)$-bimodule $M$ and a $(C,B)$-bimodule $N$ such that $M \otimes_C N \cong B$ as $(B,B)$-bimodules and $N \otimes_B M \cong C$ as $(C,C)$-bimodules. We say that the Morita equivalence is \emph{realised} by $M$ and $N$. Two blocks are \emph{basic Morita equivalent} if they are Morita equivalent via an equivalence realised by bimodules with endopermutation source. Two blocks are \emph{source algebra equivalent} (or \emph{Puig equivalent}) if they are Morita equivalent via an equivalence realised by bimodules with trivial source. Note that each equivalence is stronger than the ones above it.

Given a finite group $G$, a block $B$ of $\cO G$ is said to be quasiprimitive if for any normal subgroup $N \lhd G$ there is a unique block $b$ of $\cO N$ covered by $B$. This is equivalent, by \cite[15.1]{alp151}, to the requirement that $b$ is $G$-stable under the action of $G$ by conjugation on $\cO N$. We can reduce to quasiprimitive blocks in many situations using the Fong-Reynolds correspondence:

\begin{theorem}[6.8.3 \cite{lin18}] \label{fong1}
Let $G$ be a finite group, and let $N$ be a normal subgroup of $G$. Let $b$ be a block of $\cO N$ and $B$ be a block of $\cO G$  that covers $b$. Let $H$ be the stabiliser of $b$ in $G$ acting by conjugation. Then there is a one-to-one correspondence between the blocks of $\cO G$ that cover $b$ and the blocks of $\cO H$ that cover $b$, where each block is source algebra equivalent to its correspondent.
\end{theorem}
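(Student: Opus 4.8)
The plan is to follow the classical proof of the Fong--Reynolds correspondence (cf.\ \cite[6.8.3]{lin18}), which in fact yields an explicit trivial source bimodule realising it. Write $e$ for the block idempotent of $b$. Since $N\lhd G$, conjugation permutes the blocks of $\cO N$, and $H$ is by hypothesis the stabiliser of $b$, so $N\le H\le G$ and, $e$ being $H$-fixed and central in $\cO N$, we get $e\in Z(\cO H)$. Fixing a left transversal $T$ of $H$ in $G$ with $1\in T$, I would set $f:=\Tr_H^G(e)=\sum_{t\in T}{}^{t}e$: a sum of pairwise orthogonal block idempotents of $\cO N$, hence an idempotent, which is $G$-fixed and lies in $\cO N$, so $f\in Z(\cO G)$, with $ef=fe=e$. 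By \cite[15.1]{alp151} a block $B$ of $\cO G$ covers $b$ iff $Be\neq 0$ iff (as $B$ is $G$-fixed) $Bf\neq 0$, i.e.\ iff $B$ is one of the block summands of $\cO Gf$; likewise the blocks of $\cO H$ covering $b$ are the block summands of $\cO He$. Throughout I would use the elementary fact that $ege=e({}^{g}e)g=0$ for $g\in G\smallsetminus H$, since ${}^{g}e$ and $e$ are then distinct, hence orthogonal, block idempotents of $\cO N$.

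The core of the argument is to show that $M:=\cO Ge$, regarded as a $(\cO Gf,\cO He)$-bimodule via left and right multiplication (legitimate because $fe=e$ and $e\in Z(\cO H)$), realises a Morita equivalence $\cO Gf\sim\cO He$. As $\cO G$ is free over $\cO H$ on $T$, we have $M=\bigoplus_{t\in T}t\,\cO He\cong(\cO He)^{[G:H]}$ as a right $\cO He$-module, so $M$ is a progenerator there; it then remains to check that left multiplication gives an \emph{isomorphism} $\cO Gf\xrightarrow{\sim}\End_{\cO He}(M)$. Injectivity is easy: an $x\in\cO Gf$ killing $M$ satisfies $x\,{}^{t}e=0$ for every $t\in T$, hence $xf=0$, hence $x=0$. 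Surjectivity: a given $\varphi\in\End_{\cO He}(M)$ is realised by left multiplication by $\sum_{s\in T}\varphi(se)\,es^{-1}\in\cO G$ (check on the generators $te$, $t\in T$, using $e(s^{-1}t)e=\delta_{s,t}\,e$, which is the fact above), and one passes to its image in $\cO Gf$. Granting this, $M$ together with its $\cO He$-dual $N:=\Hom_{\cO He}(M,\cO He)$ — which one identifies with the bimodule $e\cO G$ — realises the equivalence. I expect this endomorphism-ring identification to be the main obstacle; everything else is formal.

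Finally, I would descend to individual blocks and keep track of sources. A Morita equivalence matches primitive central idempotents bijectively, which gives the asserted bijection $B\leftrightarrow B'$ between blocks of $\cO G$ covering $b$ and blocks of $\cO H$ covering $b$; cutting $M$ and $N$ by the matched central idempotents yields Morita equivalences of the matched pairs, realised by $M_B:=e_B\,\cO G\,e_{B'}$ and $N_B:=e_{B'}\,\cO G\,e_B$. To see these have trivial source, view $\cO G$ as an $\cO[G\times H]$-module via $(g,h)\cdot x=gxh^{-1}$: the $G\times H$-set $G$ is transitive with point stabiliser $\Delta H=\{(h,h):h\in H\}$, so $\cO G\cong\Ind_{\Delta H}^{G\times H}\cO$ is a permutation bimodule, hence trivial source. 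Right multiplication by $e\in Z(\cO H)$ commutes with both actions, so it is an idempotent $\cO[G\times H]$-endomorphism of $\cO G$ splitting off $M=\cO Ge$ as a direct summand; symmetrically, left multiplication by $e$ splits off $N=e\cO G$ as an $\cO[H\times G]$-summand. Hence $M$ and $N$ have trivial source, and since $e_B\in Z(\cO G)$ and $e_{B'}\in Z(\cO H)$ also act linearly, so do their restrictions $M_B$ and $N_B$. Thus each matched pair $B,B'$ is Morita equivalent via bimodules with trivial source, i.e.\ source algebra (Puig) equivalent in the sense of Section~2, which is the assertion.
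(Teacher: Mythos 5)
Your proof is correct and complete: it is the standard Fong--Reynolds argument (Morita equivalence $\cO Gf\sim\cO He$ via the progenerator $\cO Ge$, then descent to matched blocks and the $p$-permutation-module observation for trivial sources), which is precisely the proof in the source the paper cites — the paper itself gives no proof of this statement, only the reference to \cite[6.8.3]{lin18}. All the key verifications (orthogonality $ege=0$ for $g\notin H$, surjectivity onto $\End_{\cO He}(\cO Ge)$, and the identification $\cO G\cong\Ind_{\Delta H}^{G\times H}\cO$) check out, and the conclusion matches the paper's working definition of source algebra equivalence in Section~2.
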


We also use the following version of Fong's Second Reduction, used whenever $B$ covers a nilpotent block, and in particular when $G$ has a normal $p'$-subgroup.

\begin{theorem}[\cite{kupu90}] 
Let $G$ be a finite group and $N \lhd G$. Let $B$ be a block of $\cO G$ with defect group $D$ that covers a $G$-stable nilpotent block $b$ of $\cO N$. Then there are finite groups $M \lhd L$ such that $M \cong D \cap N$, $L/M \cong G/N$, there is a subgroup $D_L \leq L$ with $D_L \cong D$ and $M \leq D_L$, and there is a central extension $\widehat{L}$ of $L$ by a $p'$-group, and a block $\widehat{B}$ of $\cO \tilde{L}$ which is Morita equivalent to $B$ and has defect group $\widehat{D} \cong D$. If $B$ is the principal block, then so is $\widehat{B}$.
\end{theorem}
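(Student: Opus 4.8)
The plan is to follow Külshammer and Puig's analysis \cite{kupu90} of the source algebra of $B$, using the nilpotency of $b$ to control $\cO N b$. First I would set $P = D\cap N$; conjugating inside $G$ I may assume $P\le D$ and fix a maximal $b$-Brauer pair $(P,e_P)$. By Puig's structure theorem for nilpotent blocks \cite{lin18} there is an endopermutation $\cO P$-module $V$ with vertex $P$ and a source idempotent $i\in(\cO N b)^P$ with $i\,\cO N b\,i\cong S:=\cO P\otimes_\cO\End_\cO(V)$ as interior $P$-algebras; in particular $\cO N b$ is Morita equivalent to $\cO P$ and $S\cong\mathrm{Mat}_{\dim V}(\cO P)$. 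This replaces the hypothesis ``$\cO N b=\cO P$'' of the classical Fong second reduction.

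The core of the argument is to convert the conjugation action of $G$ on $\cO N b$ (an action because $b$ is $G$-stable) into a genuine finite group acting on $P$. For $g\in G$ the idempotent ${}^{g}i$ is again a source idempotent of $b$; since $N$ acts transitively on the maximal $b$-Brauer pairs, after replacing $g$ by $gn$ for suitable $n\in N$ I may assume $g$ normalises $(P,e_P)$, so ${}^{g}i$ and $i$ are source idempotents for the same Brauer pair and differ by conjugation by a unit of $(\cO N b)^P$. Carrying this out coherently over a transversal gives, for each coset of $G/N$, an automorphism of the interior $P$-algebra $S$, well defined modulo inner automorphisms and $\cO^\times$-scalars. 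On the canonical image of $P$ in $S^\times$ such an automorphism acts, after adjusting by an inner automorphism, through an element of $\Aut(P)$ extending the conjugation action of $N_G(P,e_P)$ on $P$, while on $\End_\cO(V)$ it acts only projectively, because the class $[V]$ in the Dade group of $P$ is an invariant and hence fixed by the relevant stabiliser. Gluing this data yields a finite group $L$ with a normal subgroup $M\cong P$ and $L/M\cong G/N$, and a class $\alpha\in H^2(L,\cO^\times)$ of order prime to $p$ recording the scalar ambiguity; taking $D_L\le L$ to be the preimage of $DN/N\le G/N\cong L/M$ gives $M\le D_L$ and $|D_L|=|P|\cdot|D/P|=|D|$, and one checks the extension $1\to M\to D_L\to D/P\to 1$ matches $1\to P\to D\to D/P\to 1$, so $D_L\cong D$. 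Choosing a central extension $1\to Z\to\widehat L\to L\to 1$ by a $p'$-group $Z$ on which $\alpha$ inflates trivially, the twisted algebra $\cO_\alpha L$ is identified with a block $\widehat b$ of $\cO\widehat L$ covering a faithful character of $Z$, and $\widehat b$ has defect group $\widehat D$ isomorphic via $\widehat L\to L$ to $D_L$, hence to $D$. If $B=B_0(\cO G)$, then $b=B_0(\cO N)$, $V$ is trivial, $\alpha=1$, and one may take $\widehat L=L$, $\widehat b=B_0(\cO L)$.

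It then remains to produce the Morita equivalence. Following \cite{kupu90}, I would show the source algebra of $B$ is isomorphic, as interior $G$-algebra, to $S\otimes_{\cO P}A'$, where $A'$ is the source algebra of $\widehat b$ (a copy of $\cO P$ sitting inside $A'$ via $M$). Since $S\cong\mathrm{Mat}_{\dim V}(\cO P)$ is Morita trivial over $\cO P$, the equivalence $\cO G b\text{-mod}\simeq\cO\widehat L\widehat b\text{-mod}$ is realised by a bimodule built from $\cO G i$ and this factorisation. Tracking vertices through that bimodule shows the block of $\cO\widehat L$ Morita equivalent to $B$ is $\widehat b$, with defect group $\widehat D\cong D$, as claimed; the principal-block case was handled above, and the final assertion of the theorem (that any block Morita equivalent to $B$ again has defect group $\cong D$) is not needed here, being a separate input used elsewhere.

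The hard part will be the middle step: extracting the genuine group $L$, and its $p'$-central extension $\widehat L$, from the merely up-to-inner-and-scalars action of $G$ on $S=\cO P\otimes_\cO\End_\cO(V)$. Essentially all the work sits here, since one must (i) use Puig's invariance of the Dade-group class $[V]$ to split the honest $\Aut(P)$-part off from the projective part coming from $\End_\cO(V)$, (ii) assemble the local choices over a transversal into a single $2$-cocycle $\alpha$ on $L$, and (iii) check $\alpha$ can be chosen with values among roots of unity of order prime to $p$, so that absorbing it costs only a $p'$-extension. Once this book-keeping is in place, the source-algebra factorisation, the Morita equivalence, and the statements about defect groups and the principal block follow formally.
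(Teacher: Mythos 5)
The paper gives no proof of this statement at all---it is imported directly from K\"ulshammer--Puig \cite{kupu90} as a black box---so there is no in-paper argument to compare yours against. Your outline faithfully reproduces the architecture of the proof in that reference (Puig's structure theorem for the nilpotent block $b$, the conversion of the outer $G/N$-action on the source algebra $\cO P\otimes_\cO\End_\cO(V)$ into a genuine group $L$ plus a $p'$-central extension $\widehat{L}$ absorbing the scalar cocycle, and the matrix-algebra factorisation of the source algebra of $B$ yielding the Morita equivalence), and you correctly identify that essentially all of the substance lies in the middle gluing step, which you describe accurately but, as you acknowledge, do not carry out.
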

In particular we can assume that $Z(\widehat{L}) \leq [\widehat{L},\widehat{L}]$, and hence that $\widehat{L}$ is contained in a Schur representation group \cite[\S 11]{isaacs} of $L$, as otherwise we could consider a smaller group that still has a block Morita equivalent to $B$ (see also \cite[6.8.13]{lin18}).
 
\begin{corollary}[\cite{ea17}] \label{kulshammerpuigcorollary}
Let $G$ be a finite group, let $N \lhd G$ with $N \not\leq Z(G)O_p(G)$. Let $B$ be a quasiprimitive $p$-block of $\cO G$ covering a nilpotent block of $\cO N$. Then there is a finite group $H$ with $[H:O_{p'}(Z(H))] < [G:O_{p'}(Z(G))]$ and a block $B_H$ with isomorphic defect group to the one of $B$, such that $B_H$ is Morita equivalent to $B$.
\end{corollary}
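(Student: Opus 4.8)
The plan is to deduce this directly from Fong's second reduction (the K\"ulshammer--Puig theorem stated just above) together with the subsequent remark on Schur representation groups. Since $B$ is quasiprimitive, there is a unique block $b$ of $\cO N$ covered by $B$; by \cite[15.1]{alp151} this forces $b$ to be $G$-stable, and by hypothesis $b$ is nilpotent. Applying the preceding theorem to the pair $N\lhd G$ produces finite groups $M\lhd L$ with $M\cong D\cap N$ and $L/M\cong G/N$, a subgroup $D_L\leq L$ with $D_L\cong D$ and $M\leq D_L$, a central extension $\widehat L$ of $L$ by a $p'$-group $Z$, and a block $\widehat B$ of $\cO\widehat L$ which is Morita equivalent to $B$ and has defect group $\widehat D\cong D$. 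We set $H:=\widehat L$ and $B_H:=\widehat B$; the assertion on defect groups is then immediate, so everything reduces to proving the index inequality $[H:O_{p'}(Z(H))]<[G:O_{p'}(Z(G))]$.

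For that, I would first use the remark to assume $Z(\widehat L)\leq[\widehat L,\widehat L]$, i.e.\ that $\widehat L$ lies inside a Schur representation group of $L$: if this fails, pass to a strictly smaller group still carrying a block Morita equivalent to $B$ and iterate; the descent terminates. Then one counts orders. The kernel $Z$ of $\widehat L\to L$ is a central $p'$-subgroup of $H$, so $Z\leq O_{p'}(Z(H))$ and hence $[H:O_{p'}(Z(H))]\leq|\widehat L|/|Z|=|L|=|D\cap N|\cdot|G/N|$, while $[G:O_{p'}(Z(G))]=|N|\cdot|G/N|/|O_{p'}(Z(G))|$. This crude bound is not by itself enough (it can exceed $[G:O_{p'}(Z(G))]$ when $O_{p'}(Z(G))$ is large), so one must also note that the central $p'$-section $O_{p'}(Z(G))N/N$ of $L/M\cong G/N$ lifts, by a coprime-action argument, into $O_{p'}(Z(L))$ and thence into $O_{p'}(Z(H))$: a $p'$-element of $L$ that is central modulo the $p$-group $M$ and acts trivially on $M$ is central in $L$, since the group of automorphisms of $L$ acting trivially on $M$ and on $L/M$ is a $p$-group. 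Feeding this improved lower bound for $|O_{p'}(Z(H))|$ into the estimate, the inequality to be proved collapses to $|D\cap N|\cdot|O_{p'}(Z(G))\cap N|<|N|$, equivalently that the (commuting, coprime-order) product subgroup $(D\cap N)\,(O_{p'}(Z(G))\cap N)$ is proper in $N$.

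This last point is where the hypothesis $N\not\leq Z(G)O_p(G)$ enters and is the main obstacle: if the product equalled $N$, then $N=(D\cap N)\times(O_{p'}(Z(G))\cap N)$ with $D\cap N$ a normal Sylow $p$-subgroup of $N$, hence $D\cap N=O_p(N)\leq O_p(G)$ and $O_{p'}(Z(G))\cap N\leq Z(G)$, so $N\leq Z(G)O_p(G)$, a contradiction. The remaining work is bookkeeping that requires care rather than new ideas: running the Schur-representation-group minimization without disturbing the defect group, and checking that the central $p'$-subgroup produced by the coprime-action argument genuinely survives the passage from $L$ to $\widehat L$, which amounts to following the cohomological twist in the K\"ulshammer--Puig construction.
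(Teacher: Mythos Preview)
The paper does not give its own proof of this corollary; it is quoted from \cite{ea17}. So there is nothing to compare against, and I can only evaluate your argument on its own.

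Your overall strategy is the right one, and the endgame is clean: once the inequality is reduced to $(D\cap N)\,(O_{p'}(Z(G))\cap N)\subsetneq N$, the contradiction with $N\not\le Z(G)O_p(G)$ is exactly as you describe. The weak point is the passage you label ``bookkeeping'': lifting the central $p'$-section $O_{p'}(Z(G))N/N$ first into $O_{p'}(Z(L))$ and then into $O_{p'}(Z(\widehat L))$. The second of these is \emph{not} automatic for an arbitrary central $p'$-extension. If $1\to Z\to \widehat L\to L\to 1$ is central and $y\in\widehat L$ maps to a central element of $L$, then $h\mapsto[h,y]$ is a homomorphism $\widehat L\to Z$, and there is no general reason for it to be trivial; the assumption $Z(\widehat L)\le[\widehat L,\widehat L]$ does not help here. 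So calling this step ``following the cohomological twist'' undersells a genuine obstruction: you would need specific information about the K\"ulshammer--Puig extension to push it through, and that is more than bookkeeping. The first lift (into $Z(L)$) also tacitly uses that the conjugation action of $L/M$ on $M$ matches the action of $G/N$ on $D\cap N$, which is plausible but again requires opening up the construction.

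Both difficulties disappear if you insert one preliminary reduction: replace $N$ by $N':=N\,O_{p'}(Z(G))$. Since $O_{p'}(Z(G))$ is a central $p'$-subgroup, Lemma~\ref{centralproducts} gives that the unique block of $\cO N'$ covered by $B$ is still nilpotent, and $N'\not\le Z(G)O_p(G)$ because $N\le N'$. Now $O_{p'}(Z(G))\le N'$, so $O_{p'}(Z(G))N'/N'=1$ and the crude bound $[H:O_{p'}(Z(H))]\le |L|=|D\cap N'|\cdot|G|/|N'|$ already yields the desired strict inequality via your final paragraph, with no lifting required. With this one-line fix your argument is complete.
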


Given a block $B$ of $\cO G$, a normal subgroup $N \lhd G$ and a block $b$ of $\cO N$ covered by $B$, and given a chain of normal subgroups $N = N_0 \lhd N_1 \lhd \dots \lhd N_t = G$ we define a \textit{block chain} to be any sequence of blocks $b_i$ of $\cO N_i$ such that $b_{i+1}$ covers $b_i$, $b_0=b$ and $b_t=B$. 
\begin{lemma} \label{pcore}
Let $G$ be a finite group and $B$ a quasiprimitive block of $\cO G$ with a defect group $D$ of order $p^n$. Let $N$ be a normal subgroup of $G$ and let $b$ be a block of $\cO N$ covered by $N$. If $G/N$ is solvable, then $DN/N$ is a Sylow $p$-subgroup of $G/N$.
\end{lemma}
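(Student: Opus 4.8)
The plan is to choose a chief series of $G$ passing through $N$, pass to the block chain lying over it, and show that the defect group of $B$ grows by a full $p$-part along the ``$p$-steps'' of the series and not at all along the others, so that in the end $DN/N$ has order $|G/N|_p$.

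Since $G/N$ is solvable, I would first refine $N \lhd G$ to a chief series $N = N_0 \lhd N_1 \lhd \cdots \lhd N_t = G$ with every $N_i \lhd G$; then each factor $N_{i+1}/N_i$ is an elementary abelian $q_i$-group for some prime $q_i$. As $B$ is quasiprimitive, it covers a unique block $b_i$ of $\cO N_i$ for each $i$, so the $b_i$ form a block chain $b = b_0, b_1, \dots, b_t = B$ in the sense defined above, and every $b_i$ is $G$-stable. Iterating \cite[15.1]{alp151} (each $b_{i+1}$ being $N_{i+1}$-stable), I would fix a single defect group $D$ of $B$ with $D \cap N_i$ a defect group of $b_i$ for all $i$: begin with a defect group of $b_0$ inside $N_0$, and at step $i$ replace the chosen defect group of $b_i$ (contained in $N_i$) by a defect group of $b_{i+1}$ meeting $N_i$ in it.

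Next I would evaluate $|DN/N| = |D : D\cap N| = \prod_{i=0}^{t-1} |D\cap N_{i+1} : D\cap N_i|$ step by step. If $q_i\neq p$, then $(D\cap N_{i+1})N_i/N_i$ is simultaneously a $p$-group and a subgroup of the $q_i$-group $N_{i+1}/N_i$, hence trivial, so $D\cap N_{i+1}=D\cap N_i$ and the $i$-th factor is $1=|N_{i+1}/N_i|_p$. If $q_i=p$, then $N_{i+1}/N_i$ is a $p$-group and $b_{i+1}$ is a block of $\cO N_{i+1}$ covering the $N_{i+1}$-stable block $b_i$, so the defect numbers add, $d(b_{i+1})=d(b_i)+\log_p|N_{i+1}:N_i|$, whence the factor is $|N_{i+1}:N_i|=|N_{i+1}/N_i|_p$. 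Multiplying gives $|DN/N|=\prod_i |N_{i+1}/N_i|_p=|G/N|_p$, and since $DN/N$ is in any case a $p$-subgroup of $G/N$, it must be a Sylow $p$-subgroup.

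The only non-formal ingredient, and the step I expect to be the main obstacle, is the additivity of defects along a $p$-group extension of a $G$-stable block. The inequality $d(b_{i+1}) \le d(b_i)+\log_p|N_{i+1}:N_i|$ is Knörr's bound on $|D:D\cap N|$; the reverse inequality is exactly where stability (hence quasiprimitivity) is used — one lifts a height-zero ordinary character, or a maximal Brauer pair, of $b_i$ along the $p$-group $N_{i+1}/N_i$, noting that the relevant twisted group algebra of $N_{i+1}/N_i$ over $\cO$ has full defect because modulo its radical it is the group algebra over $k$ of a $p$-group. This is standard, but it genuinely needs quasiprimitivity: already the non-principal $2$-block of $\cO S_3$, which is not quasiprimitive, has trivial defect group while the Sylow $2$-subgroup of the solvable group $S_3$ is non-trivial.
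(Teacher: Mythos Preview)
Your proof is correct and follows essentially the same route as the paper: refine $N\lhd G$ to a series with each factor either a $p$-group or a $p'$-group, run the unique block chain afforded by quasiprimitivity, observe that defect groups are unchanged along $p'$-steps and grow by the full index along $p$-steps, and multiply. The only differences are cosmetic: the paper uses the upper $p$-series of $G/N$ rather than a chief series, and for the $p$-step it simply invokes that a stable block under a normal $p$-extension has a unique covering block \cite[5.3.5]{feit} and then \cite[15.1]{alp151} to conclude $[D_{i+1}:D_i]=[N_{i+1}:N_i]$, bypassing the height-zero/twisted-group-algebra argument you sketch.
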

\begin{proof}
Since $G/N$ is solvable, we can consider the upper $p$-series of $G/N$ \cite[\S 6.3]{gor07}:
$$1 \lhd O_p(G/N) \lhd O_{p,p'}(G/N) \lhd O_{p,p',p}(G/N) \lhd \dots \lhd G/N$$
Every group in the chain is a characteristic subgroup of $G/N$, and each index is either a power of $p$ or prime to $p$. We can take a preimage of this series under $\pi:G \to G/N$ and obtain:
$$N_0 = N \lhd N_1 \lhd N_2 \lhd N_3 \lhd \dots \lhd N_t = G$$
Consider the corresponding block chain given by the unique blocks $b_i$ of $\cO N_i$ covered by $B$, and let $D_i$ be the defect group of $b_i$. We distinguish two cases: \begin{itemize}
\item $[N_{i+1}:N_i]$ is prime to $p$. Then $b_i$ and $b_{i+1}$ share a defect group, so $D_i = D_{i+1}$.
\item $[N_{i+1}:N_i]$ is a power of $p$. Then $b_i$ is $N_{i+1}$ stable because it is $G$-stable, and $b_{i+1}$ is the unique block of $\cO N_{i+1}$ covering $b_i$ \cite[5.3.5]{feit}. Then by \cite[15.1]{alp151} we have that $[D_{i+1} : D_i] = [N_{i+1}:N_i]$.
\end{itemize}
Then:
$$[D:D_0] = [D:D_{t-1}]\dots[D_1:D_0] = [G:N_{t-1}]_p\dots[N_1:N_0]_p = |G/N|_p$$
Since from \cite[15.1]{alp151} $D_0 = D \cap N$, we are done. 
\end{proof}

A block $B$ of $\cO G$ is \textit{nilpotent covered} if there exists a group $\tilde{G} \rhd G$ and a nilpotent block $\tilde{B}$ of $\cO \tilde{G}$ such that $\tilde{B}$ covers $B$. We say that $B$ is \textit{inertial} if it is basic Morita equivalent to its Brauer correspondent. The following lemma relates these two concepts.
\begin{lemma}[\cite{pu11}, \cite{zhou16}]  \label{nilpotentcovered}
Let $G$ be a finite group and let $N \lhd G$. Let $b$ be a $p$-block of $\cO N$ covered by a block $B$ of $\cO G$. Then: 
\begin{enumerate}[label=(\arabic*)] \setlength\itemsep{0em}
\item If $B$ is inertial, then $b$ is inertial.
\item If $b$ is nilpotent covered, then $b$ is inertial.
\item If $p$ does not divide $[G:N]$ and $b$ is inertial, then $B$ is inertial.
\item If $b$ is nilpotent covered, then it has abelian inertial quotient.
\end{enumerate}
\end{lemma}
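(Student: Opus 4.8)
All four parts are consequences of the structure theory of blocks covered by a nilpotent block, developed by Puig \cite{pu11} and sharpened by Zhou \cite{zhou16}; the plan is to isolate the two inputs that do the work --- a structure theorem for nilpotent-covered blocks on the one hand, and the compatibility of basic Morita equivalences with the covering relation on the other --- and to read off (1)--(4) from them. Write $D$ for a defect group of $B$ and $Q = D \cap N$ for the induced defect group of $b$, and recall that a block is inertial precisely when it is basic Morita equivalent to its Brauer correspondent in the normaliser of a defect group, which by K\"ulshammer's theorem has a source algebra of the twisted-group-algebra shape $\cO_\gamma(Q \rtimes E)$, with $E$ its inertial quotient.

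The central statement is (2). Assume $\tilde N \rhd N$ carries a nilpotent block $\tilde b$ of $\cO\tilde N$ covering $b$. By Puig's theorem on nilpotent blocks the source algebra of $\tilde b$ is $\End_\cO(V) \otimes_\cO \cO\tilde Q$ for an endopermutation module $V$ over a defect group $\tilde Q$ of $\tilde b$, with $\tilde Q \cap N = Q$. Running the K\"ulshammer--Puig analysis of how a block behaves along the normal inclusion $N \lhd \tilde N$ then yields a source algebra of $b$ of the form $\End_\cO(W) \otimes_\cO \cO_\gamma \hat L$, where $\hat L$ is a central $p'$-extension of $Q \rtimes E$ (with $E$ the inertial quotient of $b$), $\gamma$ is a $2$-cocycle read off from the local data, and $W$ is again a module with endopermutation source. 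Comparing this with the source algebra of the Brauer correspondent of $b$ supplied by K\"ulshammer's theorem --- which has the same shape --- and checking that the two cocycles and the two endopermutation-source classes coincide (both being determined by the fusion system of $b$), one concludes that $b$ is basic Morita equivalent to its Brauer correspondent, i.e.\ that $b$ is inertial. Part (4) is extracted from the same description: the group $\hat L$ arising above is an extension of $Q$ by $E$ in which $E$ is realised through the action of $\tilde N/N$ on the local structure, and the nilpotency of $\tilde b$ --- which forces the $\tilde N$-fusion on $\tilde Q$ to be that of $\tilde Q$ itself --- constrains this copy of $E$ to be abelian.

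For (1), suppose $B$ is inertial. The compatibility of basic Morita equivalences with covering by normal subgroups, which is the technical core of \cite{zhou16}, shows that restricting the bimodule realising a basic Morita equivalence between $B$ and its Brauer correspondent produces, again with endopermutation source, a basic Morita equivalence between $b$ and the block covered by that Brauer correspondent; K\"ulshammer's theorem identifies the latter as being of inertial type for $b$, so $b$ is inertial. For (3), assume $p \nmid [G:N]$, so $D \leq N$ is a common defect group; after a Fong--Reynolds reduction we may take $b$ to be $G$-stable, and then induction from $N$ to $G$ --- which preserves endopermutation-source bimodules precisely because the index is prime to $p$, and carries the relevant local data along with it --- transports the basic Morita equivalence witnessing the inertiality of $b$ onto one witnessing that of $B$. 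The main obstacle is (2): it rests on the full K\"ulshammer--Puig structure theorem for nilpotent extensions together with the bookkeeping of the $2$-cocycle and of the endopermutation-source class needed to match the source algebra of $b$ with that of its Brauer correspondent; once (2) is established, (4) drops out of its proof, and (1) and (3) are transport-of-structure arguments along Clifford theory and $p'$-index induction respectively.
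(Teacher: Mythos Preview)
The paper does not actually prove this lemma: it simply records that (1), (2) and (4) are Theorem~3.13 and Corollary~4.3 of \cite{pu11}, and that (3) is the main theorem of \cite{zhou16}. Your proposal is therefore attempting something strictly more ambitious, namely sketching the content of those cited results. Your identification of (2) as the central statement, to be obtained from the K\"ulshammer--Puig structure theorem for nilpotent extensions, and of (3) as Zhou's theorem, is correct and matches the paper's attributions.

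There are, however, two places where your sketch does not hold up. First, your argument for (1) is misattributed and has a genuine gap. The paper draws (1) from Puig \cite{pu11}, not from Zhou, and the mechanism you describe does not work as stated: the Brauer correspondent of $B$ lives in $\cO N_G(D)$, whereas the Brauer correspondent of $b$ lives in $\cO N_N(Q)$ with $Q = D \cap N$. Simply ``restricting the bimodule'' realising $B \sim B'$ to $N$ lands you at a block covered by $B'$ in $\cO(N \cap N_G(D))$, which is not the Brauer correspondent of $b$; bridging this requires further Clifford-theoretic work that you have not supplied. Puig's actual argument proceeds via the source-algebra embedding of $b$ into $B$ and tracks the endopermutation-source condition through that embedding, rather than by naive restriction of a bimodule.

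Second, your explanation of (4) is too vague to count as an argument. Saying that nilpotency of $\tilde b$ ``constrains this copy of $E$ to be abelian'' through the fusion on $\tilde Q$ does not explain the mechanism: the inertial quotient $E$ of $b$ is a $p'$-group acting on $Q$, and trivial fusion on $\tilde Q$ does not by itself force a $p'$-automorphism group of a subgroup to be abelian. Puig's Corollary~4.3 extracts abelianness of $E$ from a specific structural identification (the inertial quotient is realised inside an abelian quotient arising in the K\"ulshammer--Puig description), and that step is missing from your sketch.
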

\begin{proof}(1), (2) and (4) are respectively Theorem 3.13 and Corollary 4.3 in \cite{pu11}. (3) is the main theorem of \cite{zhou16}.
\end{proof}

Given two finite groups $N \lhd G$ and a block $b$ of $\cO N$, we define $G[b]$ as the group of elements of $G$ acting as inner algebra automorphisms on $b \otimes_\cO k$. We use the following result, extracted from \cite{kekoli}, when dealing with automorphisms of products of quasisimple groups. 
\begin{lemma}[\cite{kekoli}, \cite{ea14}] \label{gbstarlemma}
Let $B$ be a block of $\cO G$ with defect group $D$, and let $N \lhd G$ such that $D \leq N$, and $b$ a $G$-stable block of $\cO N$ covered by $B$. Let $\hat{b}$ be a block of $G[b]$ covered by $B$. Then $b$ is source algebra equivalent to $\hat{b}$, and $B$ is the unique block of $\cO G$ that covers $\hat{b}$. In particular, if $[G:N]=\ell$ for a prime $\ell \neq p$, either $B$ is the unique block that covers $b$ or $B$ is source algebra equivalent to $b$.
\end{lemma}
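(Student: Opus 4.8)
The plan is to establish the two stated properties of $\hat b$ in turn and then read off the ``in particular'' clause by splitting the prime index.

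\emph{Setting up $G[b]$ and pinning down the defect groups.} First record the elementary facts about $G[b]$. Each $n\in N$ induces on $\bar b=b\otimes_\cO k$ conjugation by the image of $n$, which is inner, so $N\leq G[b]$; since $b$ is $G$-stable, conjugation by any $g\in G$ restricts to an automorphism of $\bar b$, and $G[b]$ is the kernel of the resulting homomorphism $G\to\Out(\bar b)$, so $N\leq G[b]\lhd G$ and in particular $D\leq N\leq G[b]$. Next, the blocks of $\cO N$ covered by $B$ form a single $G$-orbit, and since $b$ is $G$-stable it is the \emph{only} block of $\cO N$ covered by $B$; by \cite[15.1]{alp151} its defect group is $D\cap N=D$, so $D$ is a defect group of $b$. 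Applying the same with $N\lhd G[b]$ and the block $\hat b$: a defect group of $\hat b$ meets $N$ in a defect group of $b$ (hence contains a $G$-conjugate of $D$), while, $\hat b$ being covered by $B$, it is contained up to $G$-conjugacy in the defect group $D$ of $B$; comparing orders it is a $G$-conjugate of $D$. So $b$, $\hat b$ and $B$ all have defect group $D\leq N$, and a fixed maximal $B$-Brauer pair $(D,e_D)$ may simultaneously be taken as a maximal Brauer pair of $b$ and of $\hat b$.

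\emph{Source algebra equivalence and uniqueness.} The defining property of $G[b]$ is that $G[b]/N$ acts on $\bar b$ by inner automorphisms; by the extension theory of Külshammer--Puig \cite{kupu90} this makes $\cO G[b]\,\hat b$, as an interior $D$-algebra, obtainable from $\cO N b$ by adjoining units normalising the image of $D$, and, crucially, since $D\leq N$, a source idempotent $i$ of $\hat b$ at $(D,e_D)$ can be chosen inside $\cO N b$, where it is also a source idempotent of $b$, with $i\,\cO N b\,i=i\,\cO G[b]\,\hat b\,i$. Hence $b$ and $\hat b$ share a source algebra, i.e.\ are source algebra (Puig) equivalent. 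That $B$ is the unique block of $\cO G$ lying over $\hat b$ comes from the same analysis: $G[b]$ is constructed precisely so as to absorb the ramification of the extension $N\lhd G$, so no further splitting occurs above $G[b]$. Both statements are exactly what is proved in \cite{kekoli} (see also \cite{ea14}); in the write-up I would simply quote them, having checked that the hypotheses there --- $b$ a $G$-stable block, $D\leq N$, and the above interpretation of $G[b]$ --- match our situation.

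\emph{The ``in particular'' clause.} Now suppose $[G:N]=\ell$ for a prime $\ell\neq p$. Since $N\leq G[b]\leq G$ and $[G:N]=\ell$ is prime, either $G[b]=N$ or $G[b]=G$. If $G[b]=N$, then $\hat b$ is a block of $\cO N$ covered by $B$, hence $\hat b=b$ (the unique such block), and the uniqueness statement above says $B$ is the only block of $\cO G$ covering $b$. If $G[b]=G$, then $\hat b$ is a block of $\cO G$ with $B\hat b\neq 0$, so $\hat b=B$, and the source algebra equivalence above says $b$ is source algebra equivalent to $B$. This is the claimed dichotomy.

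\emph{Main obstacle.} Everything outside the second paragraph is formal; the real content is the importation of \cite{kekoli}/\cite{ea14}, and within that the genuinely non-formal point is the identification of a source algebra of $\hat b$ with one of $b$ through the ``inner'' extension $N\lhd G[b]$. This rests on the Külshammer--Puig extension theorem and on the survival of a source idempotent inside $\cO N b$, which is exactly where the hypothesis $D\leq N$ is used; the remaining care is in matching the conventions for $G[b]$ and for ``covered by $B$'' between the two sources and the present statement.
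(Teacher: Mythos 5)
Your overall structure is right --- this is a citation lemma, and your handling of the ``in particular'' clause (splitting on $G[b]=N$ versus $G[b]=G$ when the index is prime) matches the intended reading exactly. But there are two concrete problems in the middle paragraph.

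First, the $k$ versus $\cO$ issue. The group $G[b]$ is defined by the action on $b\otimes_\cO k$, and Proposition 2.2 of \cite{kekoli} gives a source algebra equivalence between $\hat b\otimes_\cO k$ and $b\otimes_\cO k$, i.e.\ over the residue field only. The lemma as stated asserts a source algebra equivalence between $\hat b$ and $b$ over $\cO$, and this does not follow by ``simply quoting'' \cite{kekoli}: one needs the additional lifting step, which the paper supplies by invoking Proposition 7.8 of \cite{pu88b} to lift the equivalence from $k$ to $\cO$. Your sketch of the source-idempotent argument is phrased over $\cO$, but the only input you have (elements of $G[b]$ acting by inner automorphisms) is a statement over $k$, so the sketch as written does not go through without that lifting result. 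Since the entire point of this paper is to upgrade the known classification over $k$ to one over $\cO$, this is not a dismissible technicality.

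Second, the uniqueness of $B$ over $\hat b$. Your justification --- that $G[b]$ ``absorbs the ramification'' so ``no further splitting occurs'' --- is a heuristic, not an argument, and your claim that both statements are ``exactly what is proved in \cite{kekoli}'' is inaccurate on this point: the paper derives uniqueness from Theorem 3.5 of \cite{murai12}, a separate result. You should either cite that theorem or give an actual proof; as it stands this step is unsupported.
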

\begin{proof}
Proposition 2.2 in \cite{kekoli} gives the source algebra equivalence between $\hat{b} \otimes_\cO k$ and $b \otimes_\cO k$. By Proposition 7.8 in \cite{pu88b}, this equivalence lifts to $\hat{b}$ and $b$. The uniqueness of $B$ follows from Theorem 3.5 in \cite{murai12}.
\end{proof}

Given two groups $G_1$ and $G_2$, every block of $\cO(G_1 \times G_2)$ is of the form $b_1 \otimes b_2$, where $b_i$ is a block of $\cO G_i$. Ordinary representation theory of central products is very similar to that of direct products, a fact that still holds for $p$-blocks as long as the shared center is a $p'$-group.
\begin{lemma}[7.5 \cite{sam14}, 1.5 \cite{duvel04}] \label{centralproducts}
Let $G=G_1 * G_2$ be a central product of finite groups $G_1$ and $G_2$, and let $B$ be a $p$-block of $\cO G$ with defect group $D$, and $B_i$ the unique block of $\cO G_i$ covered by $B$, with defect group $D_i$. Then:
\begin{enumerate}[label=(\arabic*)]\setlength\itemsep{0em}
\item $D=D_1D_2 = D_1*D_2$ is a defect group of $B$.
\item If $G_1 \cap G_2$ has $p'$ order, then $B$ is isomorphic to $B_1 \otimes B_2$.
\item $B$ is nilpotent if and only if $B_i$ is nilpotent for each $i$.
\end{enumerate}
\end{lemma}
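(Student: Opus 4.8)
The plan is to realise $G$ as a quotient of the honest direct product $H:=G_1\times G_2$ and to transport everything we already know about blocks of a direct product across that quotient map. Define $\rho\colon H\to G$ by $(g_1,g_2)\mapsto g_1g_2$; since $[G_1,G_2]=1$ this is a surjective homomorphism, and because $G_1\cap G_2$ is central its kernel $Z=\{(z,z^{-1}):z\in G_1\cap G_2\}\cong G_1\cap G_2$ is central in $H$. The restrictions of $\rho$ to the two factors $G_i\times 1$ are injective, so we identify $G_i$ as a subgroup of $G$ with the $i$-th direct factor of $H$; in particular $B_i$ and $D_i$ are literally a block and a defect group of a factor of $H$. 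Extending $\rho$ $\cO$-linearly gives a surjection $\bar\rho\colon\cO H\twoheadrightarrow\cO G$, and this is the bridge we shall use.

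Next I would recall the direct-product facts to be pushed forward: every block of $\cO H$ is $b_1\otimes b_2$ with $b_i\in\Blk(\cO G_i)$; a defect group of $b_1\otimes b_2$ is $\delta_1\times\delta_2$ for $\delta_i$ a defect group of $b_i$; and $b_1\otimes b_2$ is nilpotent exactly when both $b_i$ are (see \cite{lin18}). Moreover every block of $\cO G_i$ is $G$-stable, because conjugation by $G=G_1G_2$ acts on $\cO G_i$ only through $G_i$ (elements of $G_{3-i}$ commute with $G_i$); hence $B$ covers a unique block $B_i$ of $\cO G_i$ and is the image under $\bar\rho$ of a single block $b_1\otimes b_2$ of $\cO H$, with $B_i=b_i$.

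Now I would split $Z=Z_{p'}\times Z_p$ and treat the two parts separately. For the $p'$-part, $|Z_{p'}|$ is invertible in $\cO$, so the central idempotents $e_\mu=|Z_{p'}|^{-1}\sum_{z\in Z_{p'}}\mu(z)^{-1}z$, $\mu\in\Irr(Z_{p'})$, decompose $\cO H$; the quotient $\cO H\to\cO[H/Z_{p'}]$ has kernel $(1-e_1)\cO H$ and so restricts to an algebra isomorphism $e_1\cO H\xrightarrow{\sim}\cO[H/Z_{p'}]$, identifying the blocks of $\cO[H/Z_{p'}]$ with those $b_1\otimes b_2$ on which $Z_{p'}$ acts trivially and preserving source algebras. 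In particular, when $G_1\cap G_2$ is a $p'$-group we have $Z=Z_{p'}$, so $\bar\rho$ restricts to an isomorphism $b_1\otimes b_2\xrightarrow{\sim}B$; hence $B\cong B_1\otimes B_2$ with defect group $\cong D_1\times D_2$, and since $|D_1\cap D_2|$ divides $|G_1\cap G_2|$ it is trivial, so inside $G$ a defect group of $B$ is $D_1D_2=D_1*D_2$, and $B$ is nilpotent iff both $B_i$ are. This proves (2), and (1) and (3) in that case. For the $p$-part, $\cO[H/Z_p]$ is a genuine quotient of $\cO H$; a central $p$-subgroup is contained in every defect group, so a block of $\cO H$ with defect group $\tilde\delta$ maps onto a block of $\cO[H/Z_p]$ with defect group $\tilde\delta/Z_p$, with nilpotency preserved in both directions. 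Composing the two reductions ($\cO H\to\cO[H/Z_{p'}]\to\cO G$), the block $B=\bar\rho(b_1\otimes b_2)$ has defect group the image $\rho(\delta_1\times\delta_2)=D_1D_2$, which inside $G$ is the central product $D_1*D_2$ over the image of $Z_p=Z\cap(\delta_1\times\delta_2)$, and $B$ is nilpotent iff $b_1\otimes b_2$ is iff both $B_i$ are. This gives (1) and (3) in general.

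The step requiring the most care will be the behaviour of defect groups under the quotient by the $p$-part $Z_p$: one must verify that passing to $\cO[H/Z_p]$ shrinks the defect group by exactly $Z_p$ and no further, and that the image block is again a single block, both of which rest on the compatibility of the Brauer homomorphism and block induction with the quotient map along a central $p$-subgroup. This is standard but is where the real content lies, and it is carried out in full in \cite{sam14} and \cite{duvel04}, to which I would refer for the details.
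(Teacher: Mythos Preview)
The paper does not give its own proof of this lemma; it is stated with references to \cite[7.5]{sam14} and \cite[1.5]{duvel04} and left at that. Your argument---realising $G$ as the quotient of $G_1\times G_2$ by the central diagonal $Z\cong G_1\cap G_2$, transporting the block theory of the direct product across $\bar\rho$, and handling the $p'$-part of $Z$ via the central idempotent $e_1$ (giving an algebra isomorphism onto the quotient) and the $p$-part via the standard domination/defect-group correspondence for central $p$-quotients---is exactly the route taken in those references, and it is correct. Your identification of the quotient by $Z_p$ as the only genuinely nontrivial step is accurate, and deferring the details to \cite{sam14} and \cite{duvel04} is precisely what the paper does.
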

\noindent Note that this lemma can be generalized to a central product of any number of groups.\\

From now on, $p=2$ and $D\cong (C_2)^5$. Blocks with cyclic defect group $C_2$ are classified in \cite{alp151}. The classification for Klein four defect group appears, for instance, in \cite{erd82}, \cite{lin94}, \cite{cra11}. The classifications for $(C_2)^3$ and $(C_2)^4$ are the main theorems of \cite{ea14} and \cite{ea17} respectively.

Numerical invariants for blocks with defect group $(C_2)^5$ have not been completely determined before this work. Nevertheless, using Brauer's second main theorem and a result from \cite{sam14} we can obtain a list of possibilities for these values, that we use to deal with certain situations in the proof of the main theorem. At the end of this paper we will know exactly what cases occur (see Proposition \ref{truesambalecalcs}).
\begin{proposition} \label{sambalecalcs}
Let $B$ be a block of $\cO G$ where $G$ is a finite group, with defect group $D=(C_2)^5$ and inertial quotient $E$ . Then one of the following holds: 
\begin{enumerate}[label=(\arabic*)]\itemsep0em
\item $E= \{1\}$ and $k(B)=32$, $l(B)=1$.
\item $E \cong C_3$, $|C_D(E)|=8$ and $k(B)=32$, $l(B)=3$.
\item $E \cong C_3$, $|C_D(E)|=2$ and $k(B)=16$, $l(B)=3$.
\item $E \cong C_5$, and $k(B)=16$, $l(B)=5$.
\item $E \cong C_7$, and $k(B)=32$, $l(B)=7$.
\item $E \cong C_3 \times C_3$, and $k(B)=32$, $l(B)=9$ or $k(B)=16$, $l(B)=1$.
\item $E \cong C_{15}$, and $k(B)=32$, $l(B)=15$.
\item $E \cong C_7 \rtimes C_3$, $|C_D(E)|=4$, and $k(B)=32$, $l(B)=5$.
\item $E \cong C_{21}$ or $C_7 \rtimes C_3$, $|C_D(E)|=1$, and $k(B)=32$, $l(B)=21$ or $k(B)=24$, $l(B)=13$ or $k(B)=16$, $l(B)=5$.
\item $E \cong C_{31}$, and $k(B)=32$, $l(B)=31$ or $k(B)=24$, $l(B)=23$ or $k(B)=16$, $l(B)=15$ or $k(B)=8$, $l(B)=7$.
\item $E \cong (C_7 \rtimes C_3) \times C_3$, and $k(B) \leq 32$ and $k(B)-l(B)=17$ or $k(B)-l(B)=9$.
\item $E \cong C_{31} \rtimes C_5$, $k(B) \leq 32$ and $k(B)-l(B)=5$.
\end{enumerate}
\end{proposition}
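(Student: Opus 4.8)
The inertial quotient $E$ embeds as a $2'$-subgroup of $\Aut(D)\cong\GL_5(\mathbb{F}_2)$, a group of order $2^{10}\cdot 3^2\cdot 5\cdot 7\cdot 31$ with abelian Sylow $3$-subgroups and no element of order $9$. The plan is first to enumerate, up to conjugacy, all such $E$ together with the isomorphism type of $D$ as an $\mathbb{F}_2 E$-module, using the coprime decomposition $D=C_D(E)\oplus[D,E]$ with $[D,E]$ a faithful $\mathbb{F}_2 E$-module of dimension at most $5$: running through the faithful $\mathbb{F}_2$-modules of $2'$-groups in dimension $\le 5$ produces exactly the groups listed, together with the value of $|C_D(E)|$ in each case. (One finds in particular that $C_3$, and also the Frobenius group $C_7\rtimes C_3$, occurs for two inequivalent module structures, which accounts for the $|C_D(E)|$-dichotomies and for the fact that one group $E$ can give rise to blocks with different invariants.) Since $D$ is abelian, no proper subgroup of $D$ is $\mathcal{F}$-centric, so $\mathcal{F}$ has no essential subgroups and $\mathcal{F}=\mathcal{F}_{D\rtimes E}(D\rtimes E)$; hence $\mathcal{F}$ is determined by $(E,\text{action})$, and the $\mathcal{F}$-conjugacy classes of $B$-subsections $(u,b_u)$ (all of which have $u\in D$) correspond bijectively to the $E$-orbits on $D$.

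Next I would compute $l(B)$ wherever possible. When $E$ is cyclic — and, more generally, in every case to which the inductive perfect-isometry results of Usami and Puig, as collected in \cite{sam14}, apply — the block $B$ is perfectly isometric to $B_0(\cO(D\rtimes E))$, so $k(B)$ and $l(B)$ equal the numbers of irreducible ordinary, respectively irreducible $2$-Brauer, characters of $D\rtimes E$; the latter is $l(B_0(kE))$, i.e. the number of $2'$-classes of $E$, hence $|E|$ when $E$ is abelian. A direct character count in $D\rtimes E$ then gives the exact values in cases (1)–(5) and (7), and also yields $l(b_u)=|C_E(u)|$ whenever $C_E(u)$ is cyclic.

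For the remaining $E$ I would invoke Brauer's second main theorem in the form $k(B)=\sum_{(u,b_u)}l(b_u)$, the sum over representatives of the $\mathcal{F}$-classes of $B$-subsections. For $1\ne u\in D$ the block $b_u$ of $\cO C_G(u)$ has defect group $D$ (as $D$ is abelian) and inertial quotient $C_E(u)$, with $\langle u\rangle\le C_D(C_E(u))$; reading off the pair $(C_E(u),|C_D(C_E(u))|)$ from each $E$-orbit identifies $b_u$ with a case treated earlier in the list, and only the self-referential orbit with $u\in C_D(E)$ (present exactly when $E$ is non-cyclic) produces a block of the same type as $B$. Feeding the resulting values — or, in cases (6), (8), (9), the two-element sets of values — of $l(b_u)$ into the sum gives the exact value of $k(B)-l(B)$; combining this with the bound $k(B)=k_0(B)\le|D|=32$ (all irreducible characters of $B$ have height zero because $D$ is abelian \cite{sam14}) and with the constraints on $l(B)$ available from \cite{sam14} — notably those arising from the generalized decomposition numbers attached to a nilpotent major subsection, which for $E=C_{31}$ force $8\mid k(B)$ — leaves exactly the finite lists stated.

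The main obstacle is the group of cases — (6), (9), (10), (11), (12) — in which $l(B)$ itself is not pinned down: there neither the Usami–Puig machinery nor Broué's abelian defect conjecture is available, so the best one can do is combine the exact value of $k(B)-l(B)$ (which by this stage involves only elementary, or already-classified, subsection blocks) with the two numerical constraints above and accept the resulting short list of pairs. A secondary difficulty is purely organisational: one must carefully check the subgroup-and-module bookkeeping inside $\GL_5(\mathbb{F}_2)$, and correctly match each subsection block $b_u$ to its case via the pair $(C_E(u),|C_D(C_E(u))|)$ — in particular for the two inequivalent faithful actions of $C_7\rtimes C_3$.
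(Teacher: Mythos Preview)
Your overall strategy --- enumerate the possible $(E,\text{action})$ inside $\GL_5(2)$, then combine Brauer's subsection formula $k(B)=\sum_{(u,b_u)} l(b_u)$ with the bound $k(B)\le 32$ --- is exactly what the paper does. However, two steps are genuinely incomplete.

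First, for the noncyclic $E$ with $C_D(E)\ne 1$ (cases (6) and (8)) the subsection method alone does not close. For each $u\in C_D(E)\setminus\{1\}$ one has $C_E(u)=E$, so $b_u$ lands back in the \emph{same} case and your sum becomes self-referential: you acknowledge this, but ``feeding in a two-element set of values'' is not available, because at this point you have no independent information about $l(b_u)$ at all. The paper avoids this by quoting Proposition~16 of \cite{sam17} for every case with $C_D(E)\ne 1$; the underlying mechanism is that $b_u$ dominates a block of $C_G(u)/\langle u\rangle$ with defect $D/\langle u\rangle\cong(C_2)^4$ and the same inertial quotient, whose invariants are already known from \cite{ea17}, and one then uses $l(b_u)=l(\bar b_u)$ together with Watanabe's $k(B)=k(b_u)$ \cite{wa91}. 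Some inductive input of this kind is essential; without it case~(8) is not determined (you only get $k(B)=l(B)+27$ and $k(B)\le 32$) and case~(6) is not reduced to the two stated possibilities.

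Second, in case~(9) you correctly obtain $k(B)-l(B)=3+7+1=11$, but pairing this only with $k(B)\le 32$ leaves many more pairs than the three listed. The constraint you invoke for $E=C_{31}$ is needed here too: since one nontrivial subsection has $l(b_u)=1$, the argument of \cite[2.1]{kusam13} shows that $|D|=32$ is a sum of $k(B)$ squares of odd integers, forcing $k(B)\in\{8,16,24,32\}$, and hence $(k(B),l(B))\in\{(16,5),(24,13),(32,21)\}$. The paper applies this divisibility argument uniformly to cases (9) and (10), not just to $C_{31}$.
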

\begin{proof}
Since $D$ is abelian, the inertial quotient $E=N_G(D,e)/C_G(D)$ is a subgroup of $\Aut(D)=\GL_5(2)$. First, we give a classification of the subgroups of odd order of $\GL_5(2)$, which gives us all the possibilities for the isomorphism class of $E$ and its action on $D$.

An explicit computation (using Magma \cite{magma}) gives the following diagram. We write $P \to Q$ if there is a subgroup $R \lhd Q$ such that $P \cong R$, $P$ acts on $D$ in the same way as $R$ does, and $|Q|/|R|$ is a prime. We write $P \dashrightarrow Q$ if there is a subgroup $R \leq Q$ as above, except that it is not normal.

{\small \begin{displaymath}
    \xymatrix@R=36pt@C=3pt{
\quad	 		& \quad				& (C_7 \rtimes C_3) \times C_3	&  				& \quad 				&\quad 						& & \quad			& \quad		\\
 C_{21} \ar[urr]	&(C_7 \rtimes C_3)_1 \ar[ur]	&(C_7 \rtimes C_3)_2 \ar[u]	& 				& C_3 \times C_3 \ar@{-->}[ull] 		&\quad							& C_{15} 	& \quad 			&  C_{31} \rtimes C_5\\
\quad  		& C_7	\ar[u] \ar[ul] \ar[ur]	& \quad 				& (C_3)_1 \ar@{-->}[ull] \ar[ur] \ar[ulll]	& \quad 				& (C_3)_2	 \ar@{-->}[ulll] \ar[ul] \ar[ur]			&  	& C_5	 \ar[ul] \ar@{-->}[ur]	& 	C_{31} \ar[u]	\\
}
\end{displaymath}}

\normalsize \noindent We distinguish the two different actions of $C_3$ and $C_7 \rtimes C_3$ (corresponding to two pairs of distinct conjugacy classes in $\GL_5(2)$) as follows: we denote by $(C_3)_1$ the action on $D$ such that $C_D(C_3)\cong (C_2)^3$, and we denote by $(C_3)_2$ the action such that $C_D(C_3)=C_2$: the generator of this subgroup is the $5$th power of the Singer cycle $C_{15}$ in $\GL_4(2)$. Similarly, we denote by $(C_7 \rtimes C_3)_1$ the action on $D$ such that $C_D(C_7 \rtimes C_3) = (C_2)^2$, or equivalently the one such that the subgroup $C_3 \leq C_7 \rtimes C_3$ acts as $(C_3)_1$. We denote by $(C_7 \rtimes C_3)_2$ the other one, where $C_D(E)=1$ and $C_3 \leq C_7 \rtimes C_3$ acts as $(C_3)_2$. We have shown that $E$ can only be as specified in the statement of the theorem. 

Whenever $C_D(E) \neq 1$, we can use Proposition 16 in \cite{sam17} and immediately obtain our claims. This proves cases (1)-(8).

From Proposition 21 in \cite{sam17}, we have that $k(B) \leq 32$. Now we use the same argument as in \cite[2.1]{kusam13}. A subsection is defined as $(u, b_u)$ where $u \in D$ and $b_u$ is a block of $C_G(u)$ such that $(\langle u \rangle, b_u)$ is a $B$-subpair. Whenever there is a nontrivial subsection $(u,b_u)$ such that $l(b_u)=1$ then $|D|=32$ is a sum of $k(B)$ odd squares of integers, which implies that $k(B) \in \{8, 16, 24, 32\}$. In particular, such a subsection always exists when $E$ is abelian \cite[1.2]{rob92}. Since $D$ is abelian, $B$ is a controlled block, meaning that the fusion system $\mathcal{F}(B) \cong \mathcal{F}_D(D \rtimes E)$. So to compute subsections it is enough to consider a set of representatives $\mathcal{R}$ of the orbits of $D$ under the action of $E$ in the group $D \rtimes E$. 
Recall that from Brauer's second main theorem $k(B) = \sum_{(u,b_u), u \in \mathcal{R}} l(b_u)$, so in particular $k(B)-l(B)=\sum_{(u,b_u), u \in \mathcal{R}, u \neq 1} l(b_u)$. To determine $l(b_u)$ for various subsections, we use cases (1)-(8). We proceed:
\begin{enumerate}
\item[(9)] If $E$ is abelian, then there are four subsections $(1, B)$, $(u_1, b_1)$, $(u_2, b_2)$, $(u_3, b_3)$ with $l(b_1)=3$, $l(b_2)=7$, $l(b_3)=1$. So $k(B)-l(B)=11$ and our claim is proved. \\
If $E$ is not abelian, there are four subsections $(1, B)$, $(u_1, b_1)$, $(u_2, b_2)$, $(u_3, b_3)$ with $l(b_1)=3$, $l(b_2)=7$, $l(b_3)=1$. In particular, there is a subsection of length $1$ so $k(B) \in \{8, 16, 24, 32\}$. Now $k(B)-l(B)=11$ and we are done.
\item[(10)] In this case $E$ is abelian and there are only two subsections, $(1,B)$ and $(u, b_u)$, with $l(b_u)=1$. So $k(B)-l(B)=1$ and we are done.
\item[(11)] In this case there are four subsections $(1, B)$, $(u_1, b_1)$, $(u_2, b_2)$, $(u_3, b_3)$ with   $l(b_1)=9$ or $l(b_1)=1$ (we are in case (6) here), $l(b_2)=5$, $l(b_3)=3$, hence $k(B)-l(B)=17$ or $k(B)-l(B)=9$ and we are done.
\item[(12)] In this case there are only two subsections, $(1,B)$ and $(u,b_u)$, with $l(b_u)=5$. So $k(B)-l(B)=5$ and we are done. \qedhere
\end{enumerate} 
\end{proof}

The proof of our main theorem is based on studying blocks of chains of normal subgroups, and as the starting case we have blocks of quasisimple groups with an (elementary) abelian defect group, which have been completely classified in \cite{ekks}. For the reader's convenience, we extract the quasisimple groups relevant for our case from the main result.

\begin{proposition}[\cite{ekks}] \label{quasisimpleblocks}
Let $G$ be a quasisimple group, and let $B$ be a block of $\cO G$ with defect group $D \neq \{1\}$ contained in $(C_2)^5$. Then at least one of the following occurs: \begin{enumerate}[label=(\arabic*)] \setlength\itemsep{0em}
\item $B$ is the principal block, $G$ is simple and $G \cong \SL_2(8)$, $\SL_2(16)$, $\SL_2(32)$, $J_1$ or ${}^2G_2(q)$ with $q=3^{2m+1}, m \geq 1$. 
\item $B$ is the unique nonprincipal block of $G\cong \operatorname{Co}_3$ with defect group $(C_2)^3$.
\item $B$ is Morita equivalent to a block $C$ with an isomorphic defect group $D$ of $\cO M$ where $M= M_0 \times M_1$ is a subgroup of $G$ such that $M_0$ is abelian and the block of $M_1$ covered by $C$ has defect group $C_2 \times C_2$. In this case, $G$ is of type $D_n(q)$ or $E_7(q)$.
\item $O_2(Z(G))\leq (C_2)^3$ and $D/O_2(Z(G))$ has defect group $C_2 \times C_2$.
\item $B$ is nilpotent covered. In this case, if $B$ is not nilpotent then $G/Z(G)$ is of type $A_n(q)$ or $E_6(q)$ where $q$ is a power of an odd prime.
\end{enumerate}
\end{proposition}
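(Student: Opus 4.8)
The plan is to obtain this statement purely by extracting and reorganising the relevant entries from the classification of $2$-blocks of quasisimple groups with abelian defect groups established in \cite{ekks}; no genuinely new argument is needed, only careful bookkeeping.

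First I would recall the shape of that classification: it treats separately the quasisimple groups $G$ with $G/Z(G)$ an alternating group, a sporadic group, a group of Lie type in defining characteristic $2$, or a group of Lie type in odd defining characteristic, and in each case it pins down the Morita equivalence class of a block $B$ of $\cO G$ with abelian defect group $D$, stating when $B$ is principal, nilpotent, nilpotent covered, or Morita equivalent to a block of a smaller group of the form $M_0 \times M_1$ with $M_1$ of Klein-four defect. The task here is to intersect that output with the hypothesis $\{1\} \neq D \leq (C_2)^5$, i.e.\ $D$ elementary abelian of rank at most $5$; note this is a condition on the defect group, which in non-defining characteristic is generally much smaller than a Sylow $2$-subgroup.

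I would then run through the families. In defining characteristic $2$ the defect group is a full Sylow $2$-subgroup, which is abelian only for $\SL_2(2^f)$ --- elementary abelian of order $2^f$, giving $\SL_2(8)$, $\SL_2(16)$ and $\SL_2(32)$ once $\SL_2(2)$ and $\SL_2(4)\cong A_5$ are discarded as too small --- and for the Ree groups ${}^2G_2(3^{2m+1})$, whose Sylow $2$-subgroup is $(C_2)^3$; the Suzuki groups are excluded, having nonabelian Sylow $2$-subgroups. Together with $J_1$, whose Sylow $2$-subgroup is $(C_2)^3$, these give case (1). For the sporadic quasisimple groups and their covers I would consult the known block data: besides $J_1$, the only one furnishing a nontrivial abelian $2$-block of rank at most $5$ is $\operatorname{Co}_3$, whose unique nonprincipal $2$-block has defect $(C_2)^3$, yielding case (2). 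For the alternating groups and their Schur covers, and for Lie type in odd characteristic, \cite{ekks} gives that $B$ is either nilpotent covered --- and if not nilpotent then $G/Z(G)$ is of type $A_n(q)$ or $E_6(q)$ --- or it reduces, Morita over $\cO$, to a block of some $M_0 \times M_1$ with $M_0$ abelian and the covered block of $M_1$ of defect $C_2 \times C_2$ (types $D_n(q)$ and $E_7(q)$), or else $O_2(Z(G))$ is nontrivial with $D/O_2(Z(G))$ Klein four; here the hypothesis forces $O_2(Z(G)) \leq (C_2)^3$, since $|D| = 4\,|O_2(Z(G))| \leq 32$. These are cases (3)--(5).

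The main obstacle will be purely organisational: ensuring that no quasisimple group is overlooked --- in particular the exceptional Schur covers such as $2.A_n$, $3.A_6$, $6.A_6$ or $4_1.\operatorname{PSL}_3(4)$ --- and that each surviving group is filed under the correct case with the right descriptor (principal, nilpotent, nilpotent covered, or reducible as in (3)). All of this is already done in \cite{ekks}, so the work here amounts to transcribing its conclusions into the five cases of the statement.
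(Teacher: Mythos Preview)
Your approach is essentially the same as the paper's: both extract the statement from the classification in \cite{ekks}. The paper is simply more economical, citing Theorem~6.1 and Proposition~5.3 of \cite{ekks} directly (together with Lemma~4.2 and Proposition~5.4 for the second clause of (5)) rather than re-narrating the case split by family of simple groups, and it adds the observation that the Schur multipliers of the groups in (1) and (2) are trivial, which is why $G$ is genuinely simple there.

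One factual slip in your write-up: the Ree groups ${}^2G_2(3^{2m+1})$ are defined over fields of characteristic $3$, so they belong to the non-defining-characteristic case, not the defining-characteristic-$2$ case. Their Sylow $2$-subgroup is indeed $(C_2)^3$, so the principal $2$-block still lands in case (1), but the justification is not ``defect group equals Sylow because we are in defining characteristic''; rather it is that the Sylow $2$-subgroup happens to be elementary abelian of order $8$ and one checks (as in \cite{ekks}) that only the principal block arises. This does not affect the final list, but you should correct the filing before submitting.
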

\begin{proof}With the exception of the second claim of (5), the result follows from Theorem 6.1 and Proposition 5.3 of \cite{ekks} (see also 4.1 in \cite{eali18c}) and the fact that all the Schur multipliers of the groups listed in (1) and (2) are trivial, and so in those cases $G$ is actually simple. The second claim of (5) is implied by Lemma 4.2 and Proposition 5.4 in \cite{ekks}, since in the proof of Theorem 6.1 the only case in which $B$ is nilpotent covered but not nilpotent is when the hypotheses of Proposition 5.4 are satisfied.\end{proof}

\section{$(G,B)$-local systems}
When classifying Morita equivalence classes of blocks over $k$, the main theorem of \cite{kk96} is a crucial result. We state it for the reader's benefit:
\begin{theorem} \label{koshkul3}
Let $N \lhd G$ such that $[G:N]$ is a power of $p$. Let $b$ be a block of $k N$ and let $B$ be the unique block of $kG$ that covers $b$. If $B$ has an abelian defect group $D$ that admits a decomposition $D= (D \cap N) \times Q$, then $B \cong b \otimes kQ$. In particular, $B$ is Morita equivalent to the block $b \otimes kQ$ of $k (N \times Q)$. 
\end{theorem}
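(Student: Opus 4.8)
The plan is to reduce to a split extension $G = N\rtimes Q$ with $Q$ a $p$-group, realise $B$ as a skew group algebra $(b\,kN)\ast Q$, and then prove that the action of $Q$ untwists, i.e.\ is realised by a copy of $Q$ sitting inside $(b\,kN)^{\times}$. I first reduce to the case that $b$ is $G$-stable -- the essential case, and the one to which the Morita statement reduces by Fong--Reynolds (Theorem~\ref{fong1}) applied to $b\lhd N\lhd G$. Then $B = b\,kG$, and since $b$ is $G$-stable and $[G:N]$ is a power of $p$, \cite[15.1]{alp151} gives $[D:D\cap N] = [G:N]$, hence $DN = G$. Writing $R := D\cap N$, the hypothesis $D = R\times Q$ forces $Q\cap N\subseteq Q\cap R = 1$ and $QN = DN = G$, so $G = N\rtimes Q$ with $Q\cong G/N$ a $p$-group.

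As $b$ is a $G$-stable central idempotent, $B = b\,kG = \bigoplus_{x\in Q}(b\,kN)\,x$ is the skew group algebra $(b\,kN)\ast Q$ for the conjugation action of $Q$ on the block algebra $b\,kN$. The crucial assertion is that this action is \emph{strongly inner}: there is a subgroup $\widetilde Q\le(b\,kN)^{\times}$ and an isomorphism $x\mapsto u_x$ from $Q$ onto $\widetilde Q$ such that conjugation by $x\in Q\subseteq G$ agrees on $b\,kN$ with conjugation by $u_x$. Granting this, the elements $\widetilde x := u_x^{-1}x\in B^{\times}$ centralise $b\,kN$ and satisfy $\widetilde x\,\widetilde y = \widetilde{xy}$, so $\{\widetilde x : x\in Q\}$ is a copy of $Q$ in $B^{\times}$; since it is also a basis of $B$ as a free left $b\,kN$-module, the two commuting subalgebras $b\,kN$ and $k\widetilde Q$ generate $B$, and comparing dimensions shows that the canonical surjection $b\,kN\otimes_{k}k\widetilde Q\twoheadrightarrow B$ is an isomorphism. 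Hence $B\cong(b\,kN)\otimes_{k}kQ$, which is exactly the block $b\otimes kQ$ of $k(N\times Q) = kN\otimes_{k}kQ$ (the algebra $kQ$ having a unique block), giving both the asserted isomorphism and the Morita equivalence.

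It remains to prove that the $Q$-action on $b\,kN$ is strongly inner, and this is the step I expect to be the main obstacle. I would pass to a source algebra $A = i\,kN\,i$ of $b$, with $i\in(kN)^{R}$ a source idempotent chosen compatibly with the $Q$-action -- possible because $Q$ centralises $R$ (as $D = R\times Q$ is abelian) and stabilises the maximal $B$-Brauer pair $(D,e_{D})$, hence the $b$-Brauer pair and local point beneath it. Since $Q$ centralises $R$, the induced action of $Q$ on $A$ fixes the distinguished copy of $kR$ pointwise; by the rigidity of source algebras such an action of a $p$-group is inner, and lifting it to a homomorphism $Q\to A^{\times}$ costs only an obstruction class in $H^{2}(Q,Z(b)^{\times})$. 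That class vanishes: its $k^{\times}$-part is zero because $k^{\times}$ is uniquely $p$-divisible (Frobenius being a bijection of $k$), and the remaining part, valued in the pro-$p$ group $1+J(Z(b))$, is killed using the nilpotent block $e_{D}$ of $k\,C_{G}(D)$, whose central defect group $D = R\times Q$ supplies commuting copies of $R$ and $Q$ implementing the action. Transporting $\widetilde Q$ from $A$ back to $b\,kN$ along the source algebra equivalence finishes the argument; the rest is routine bookkeeping with crossed products and defect groups.
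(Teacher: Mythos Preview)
First, note that the paper does not actually prove Theorem~\ref{koshkul3}: it is quoted from \cite{kk96} for the reader's benefit, and the subsequent discussion explains precisely that the obstacle for the paper is lifting this result from $k$ to~$\cO$. So there is no ``paper's proof'' to compare against; I will assess your attempt on its own merits.

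Your reduction to $G=N\rtimes Q$ and the crossed-product description $B=(b\,kN)\ast Q$ are correct, as is the endgame: once you have a group homomorphism $Q\to(b\,kN)^\times$ implementing the conjugation action, the tensor decomposition follows immediately. The genuine gap is in your argument that such a homomorphism exists.

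You correctly argue that each $q\in Q$ acts by an inner automorphism of the source algebra $A$: since $Q$ centralises $R=D\cap N$, the image of $Q$ lands in $\Out_R(A)$, which embeds in $\Hom(E,k^\times)$ (a $p'$-group) by \cite{bkl17}, so the image of the $p$-group $Q$ is trivial. You also correctly dispose of the $k^\times$-part of the lifting obstruction. But your treatment of the remaining obstruction in $H^2(Q,1+J(Z(b)))$ is not a proof. The group $1+J(Z(b))$ is a nontrivial abelian $p$-group on which $Q$ acts trivially, and for a $p$-group $Q$ this cohomology group is typically \emph{nonzero}. Your sentence about ``the nilpotent block $e_D$ of $kC_G(D)$, whose central defect group $D=R\times Q$ supplies commuting copies of $R$ and $Q$ implementing the action'' does not explain how units in $e_D\,kC_G(D)$ --- which indeed contains a clean copy of $Q$, since $C_G(D)=C_N(D)\times Q$ --- can be transported to units in $b\,kN$ implementing the correct action; there is no algebra map in that direction, only a Brauer correspondence. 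This is exactly the step where the content of the Koshitani--K\"ulshammer argument lies, and it is the reason (as the paper itself remarks immediately after the statement) that the theorem does not lift naively to~$\cO$: whatever kills this cocycle must be specific to working over~$k$.

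In short: the architecture of your proof is the natural one, but the crucial cohomological vanishing is asserted rather than proved, and the hint you give does not point to a mechanism that would prove it. You would need to consult \cite{kk96} for the actual argument at this step.
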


One of the main obstacles that we face is that this theorem does not immediately generalise to blocks defined over $\cO$. To circumvent this issue, we use a result of Watanabe. We denote the Brou\'e-Puig construction \cite{brpu80} by $*$:
\begin{lemma}[\cite{wa00}] \label{wat00}
Let $N \lhd G$ such that $[G:N]$ is a power of $p$. Let $b$ be a block of $\cO N$ that is covered by a block $B$ of $\cO G$ and also $G$-stable. Suppose that $D$, a defect group of $B$, decomposes as $D=Q\times (D \cap N)$, and let $C=C_G(Q)$. Let $e$ be a root of $B$ in $\cO C_G(D)$, and let $e_Q:=e^{C}$. Suppose that there exists a perfect isometry $I$ between $\mathcal{L}_K(C, e_Q)$ and $\mathcal{L}_K(G, B)$, with the following property:
$$I(\lambda * \chi) = \lambda * I(\chi) \quad (\forall \lambda \in \operatorname{Irr}(Q), \; \forall \chi \in \operatorname{Irr}(e_Q)).$$
Then $B \cong \cO Q \otimes_{\cO} b \text{\quad (as $\cO$-algebras)}$.
\end{lemma}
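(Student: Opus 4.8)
The plan is to deduce the statement over $\cO$ from the corresponding, already known, statement over $k$, and to use the perfect isometry $I$ precisely to control the passage from $k$ to $\cO$. Since $b$ is $G$-stable and $[G:N]$ is a power of $p$, $B$ is the unique block of $\cO G$ covering $b$, so Theorem~\ref{koshkul3} applies to $\overline{B} := B \otimes_\cO k$ and produces a $k$-algebra isomorphism $\overline{B} \cong \overline{b} \otimes_k kQ$; equivalently, the $\cO$-orders $B$ and $\cO Q \otimes_\cO b$ have isomorphic reductions modulo $J(\cO)$. It remains to show that this reduction isomorphism lifts to an isomorphism of $\cO$-algebras $B \cong \cO Q \otimes_\cO b$.

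I would first unwind the right-hand side of the hypothesised isometry. As $C = C_G(Q)$, the subgroup $Q$ is central in $C$; and since $b$ is $G$-stable, a defect group of $B$ meets $N$ in a defect group of $b$ with index $[G:N]$, whence $QN = G$ and, using $Q \cap N = 1$ together with $C_N(Q) = C \cap N \lhd C$, one gets $C = Q \times C_N(Q)$. Therefore $e_Q = \cO Q \otimes_\cO b''$ for a block $b''$ of $\cO C_N(Q)$ with defect group $D \cap N$, and under the induced identification $\mathcal{L}_K(C,e_Q) \cong \mathcal{L}_K(Q) \otimes_\Z \mathcal{L}_K(C_N(Q),b'')$ the Brou\'e--Puig action of $\Irr(Q)$ becomes simply $\lambda * (\mu \otimes \psi) = (\lambda\mu) \otimes \psi$. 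This makes the compatibility condition on $I$ natural: transported through $I$, it equips $\mathcal{L}_K(G,B)$ with an action of $\Irr(Q)$ realising the $*$-action on $\Irr(B)$ and matching the transparent one on the $e_Q$-side.

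The lift would then be organised as follows. A perfect isometry preserves $\cO$-lattices and is compatible with the (generalised) decomposition maps, so $I$ matches the decomposition data of $B$ with that of $\cO Q \otimes b''$ up to signs, and the hypothesis $I(\lambda * \chi) = \lambda * I(\chi)$ upgrades this to a matching that is equivariant for the $\Irr(Q)$-labelling. Feeding this into the $k$-isomorphism $\overline{B} \cong \overline{b} \otimes kQ$ of Theorem~\ref{koshkul3}, one lifts the bimodule realising it to an $\cO$-bimodule by the lifting results of \cite{pu88b}, and then checks --- using the character-theoretic constraints furnished by $I$ --- that this $\cO$-bimodule realises a Morita equivalence over $\cO$ whose reduction is the isomorphism of Theorem~\ref{koshkul3}; this forces the desired $\cO$-algebra isomorphism.

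The hard part is exactly this last step. As recalled in the introduction, even an isomorphism (or Morita equivalence) of blocks over $k$ is not known to lift to $\cO$ in general, so Theorem~\ref{koshkul3} on its own is not enough; what makes the lift possible here is the \emph{perfectness} of $I$, which encodes both the preservation of $\cO$-forms and compatibility with the local (subpair) structure. I would expect the technical core of the argument to be the verification that $I$, together with the $*$-compatibility, amounts to an isotypy between $(G,B)$ and $(C,e_Q)$, and that this isotypy, combined with the $k$-level splitting of Theorem~\ref{koshkul3}, integrates to an isomorphism $B \cong \cO Q \otimes_\cO b$ of $\cO$-algebras.
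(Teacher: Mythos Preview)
The paper does not give a proof of this lemma: it is stated with the attribution \cite{wa00} and no proof environment follows. The result is quoted from Watanabe's paper as a black box, and the surrounding text makes this explicit (``we use a result of Watanabe''). So there is no in-paper argument to compare your proposal against; a genuine comparison would require consulting \cite{wa00} directly.

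That said, your outline is not really a proof but a strategy with the decisive step left open. You correctly reduce to lifting the $k$-isomorphism of Theorem~\ref{koshkul3} to $\cO$, and you correctly identify that the perfect isometry compatible with the $*$-construction is what should make this possible. But the passage ``one lifts the bimodule \ldots\ and then checks --- using the character-theoretic constraints furnished by $I$ --- that this $\cO$-bimodule realises a Morita equivalence over $\cO$'' is exactly the content of the lemma, and you have not indicated any mechanism by which the character-theoretic data of a perfect isometry forces an \emph{algebra isomorphism} (as opposed to, say, an isomorphism of centres or a derived equivalence). The lifting results of \cite{pu88b} you invoke concern source algebras and trivial-source bimodules, not arbitrary $k$-isomorphisms, so it is not clear they apply here. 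Your structural observation $C = Q \times C_N(Q)$ is correct and useful, but the argument as written does not close the gap between ``perfect isometry respecting $*$'' and ``$\cO$-algebra isomorphism''.
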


To ascertain the existence of this perfect isometry and replicate Theorem \ref{koshkul3} for blocks defined over $\cO$, we use the theory of $(G,B)$-local systems, developed in \cite{puigusami93}  (see also \cite{usami96}).

Let $B$ be a block of $\cO G$ with defect group $D$ and inertial quotient $E$. Let $b$ be its Brauer correspondent in $kN_G(D)$: by the main result of \cite{kul85}, $b$ is Morita equivalent to a twisted group algebra of $D \rtimes E$. Following \cite[1.2]{usami96}, we can construct a specific $k^\times$-central extension $\hat{E}$ of $E$ (defined explicitly in \cite[2.4]{puigusami93}), put $\hat{L}:=D \rtimes \hat{E}$ and get a twisted group algebra $k_* \hat{L}$ that is in the same Morita equivalence class of $b$. By \cite[1.2]{usami96}, there are a finite subgroup $L' \leq \hat{L}$ and a block idempotent $e_{b'}$ of $kL'$ such that $k_* \hat{L} \cong k L'e_{b'}$, so we can consider $k_*\hat{L}$ as a block algebra. This twisted group algebra can also be defined over $\cO$ (and hence over $\K$) \cite[5.12]{pu88}.

Some notation: $\mathcal{L}_K(\hat{L})$ denotes the generalised character group of $K_* \hat{L}$ (note the asterisk: every time subgroups of $\hat{L}$ appear, we are considering characters in the twisted group algebra, considered as a block). $\mathcal{L}^0_\K (\hat{L})$ denotes the kernel of the restriction map $d_{\hat{L}}: \mathcal{L}_K (\hat{L}) \to \mathcal{L}_k (\hat{L})$. We use analogous definitions for $KGB$, writing $(G,B)$ instead of $KGB$.

The precise definition of a $(G,B)$-local system \cite{puigusami93} is very technical, and we omit it for brevity. We are interested in the fact that, given an upwardly closed $E$-stable set $X$ of subgroups of $D$, the existence of a $(G,B)$-local system implies the existence of a collection of perfect isometries for each $Y \in X$:
$$\Delta_Y: \mathcal{L}_{\K}(C_{\hat{L}}(Y)) \to \mathcal{L}_\K(C_G(Y),e^{C_G(Y)})$$
that satisfy $\Delta_Y(\lambda * \eta) = \lambda * \Delta_Y(\eta)$ for any $\lambda \in \mathcal{L}_\K (D)^{C_E(Y)}$, $\eta \in \mathcal{L}_\K (C_{\hat{L}}(Y))$. Note that in particular, if $\{1\} \in X$ (and hence $X$ contains all subgroups of $D$), $\Delta_{\{1\}}$ is a perfect isometry between $\cO_* \hat{L}$ and $\cO GB$ that respects the Brou\'e-Puig construction. Then we can apply the same argument to $\cO C_G(Q) e_Q$ in the context of Lemma \ref{wat00}, which determines the same twisted group algebra $\cO_* \hat{L}$. Composing the isometries allows to satisfy the hypothesis of Lemma \ref{wat00}.

In \cite{puigusami93}, Puig and Usami  define an inductive process that enables us to verify the existence of a $(G,B)$-local system defined over the set of all subgroups of $D$. We describe it briefly. Let $X$ be an upwardly closed $E$-stable set of subgroups of $D$ on which a $(G,B)$-local system exists, and consider a maximal subgroup $Q$ that is not contained in $X$. Note that when $X=\{ D \}$ there is always a $(G,B)$-local system by \cite[3.4.2]{puigusami93}. For any $H$ such that $Q \lhd H$, let $\overline{H}$ denote the quotient $H/Q$, and for a block $e$ of $H$ let $\overline{e}$ denote the corresponding block of $\overline{H}$. 

From \cite[3.7]{puigusami93}, there is a bijective isometry induced by the $(G,B)$-local system on $X$: $$\overline{\Delta}^0_Q : \mathcal{L}^0_K (\overline{C_{\hat{L}}(Q)}) \to \mathcal{L}^0_K (\overline{C_G(Q)},  \overline{e}^{C_G(Q)})$$

We have the following lemma:

\begin{lemma}[3.11, \cite{puigusami93}] \label{extend}
With the notations above, a $(G,B)$-local system on $X$ can be extended to a $(G,B)$-local system on $X \cup \{Q^e\}_{e \in E}$ if and only if $\overline{\Delta}^0_Q$ can be extended to an $N_E(Q)$-stable bijective isometry: $$\overline{\Delta}_Q : \mathcal{L}_K (\overline{C_{\hat{L}}(Q)}) \longrightarrow \mathcal{L}_K (\overline{C_G(Q)}, \overline{e}^{C_G(Q)})$$ 
\end{lemma}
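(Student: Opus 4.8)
The full definition of a $(G,B)$-local system on an upwardly closed $E$-stable set $X$ is technical, so the plan is to work only with its formal consequences: such a system amounts to a family of perfect isometries $\Delta_Y\colon \mathcal{L}_K(C_{\hat L}(Y)) \to \mathcal{L}_K(C_G(Y),e^{C_G(Y)})$, one for each $Y\in X$, that is $E$-equivariant (so $\Delta_{Y^e}$ is the transport of $\Delta_Y$ by $e$, and each $\Delta_Y$ is $N_E(Y)$-stable), is compatible with the Brou\'e-Puig action of $\Irr(D)^{C_E(Y)}$ via $*$, and is compatible with the deflation maps $\mathcal{L}_K(C_G(Y)) \to \mathcal{L}_K(\overline{C_G(Y)})$ and with restriction to the centralizers of larger subgroups. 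The first step is a purely combinatorial reduction: since $Q$ is maximal among subgroups of $D$ not in $X$ and $X$ is upwardly closed and $E$-stable, every proper overgroup of $Q$, and of each conjugate $Q^e$, already lies in $X$. Hence enlarging the system to $X\cup\{Q^e\}_{e\in E}$ imposes no new compatibility between the various $Q^e$ themselves (any condition relating two incomparable subgroups factors through their common overgroups, which are in $X$), and by $E$-equivariance it is enough to produce a single perfect isometry $\Delta_Q$ that is $N_E(Q)$-stable and compatible with the already-fixed $\Delta_H$ for $H\in X$, $H\supsetneq Q$.

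The second step is the dictionary between $\Delta_Q$ and $\overline{\Delta}_Q$. Because $Q$ is abelian it is a central $2$-subgroup of $C_G(Q)$, and likewise of $C_{\hat L}(Q)$; the Brou\'e-Puig $*$-action of $\Irr(Q)$ on $\mathcal{L}_K(C_G(Q),e^{C_G(Q)})$ (and its twisted analogue on the $\hat L$-side) organizes how the characters of $C_G(Q)$ sit over those of the quotient $\overline{C_G(Q)}=C_G(Q)/Q$, and it transports the kernel $\mathcal{L}^0_K$ of the decomposition map in a controlled way. By the analysis of \cite[\S3]{puigusami93}, a $*$-compatible perfect isometry $\Delta_Q$ is then equivalent to the datum of a bijective isometry $\overline{\Delta}_Q\colon \mathcal{L}_K(\overline{C_{\hat L}(Q)}) \to \mathcal{L}_K(\overline{C_G(Q)},\overline{e}^{C_G(Q)})$ of the quotient character groups; moreover, gluing the $\Delta_H$ with $H\supsetneq Q$ already pins down, via \cite[3.7]{puigusami93}, the restriction of any such $\overline{\Delta}_Q$ to the kernel $\mathcal{L}^0_K(\overline{C_{\hat L}(Q)})$, namely $\overline{\Delta}^0_Q$. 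Thus ``$\Delta_Q$ exists with the required compatibilities'' is equivalent to ``$\overline{\Delta}^0_Q$ extends to an $N_E(Q)$-stable bijective isometry $\overline{\Delta}_Q$'', which is exactly the asserted equivalence, so both implications are now close to formal.

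For the ``only if'' direction one takes the component $\Delta_Q$ of a given $(G,B)$-local system on $X\cup\{Q^e\}_{e\in E}$ and deflates it modulo $Q$ to obtain $\overline{\Delta}_Q$: its $N_E(Q)$-stability is inherited from the $E$-equivariance of the system, and the compatibility of $\Delta_Q$ with the higher $\Delta_H$, together with the deflation axioms, forces $\overline{\Delta}_Q$ to restrict to $\overline{\Delta}^0_Q$ on $\mathcal{L}^0_K$ by the very definition of the latter in \cite[3.7]{puigusami93}. For the ``if'' direction one starts from an $N_E(Q)$-stable $\overline{\Delta}_Q$ extending $\overline{\Delta}^0_Q$, inflates it through the $\Irr(Q)$-dictionary of the previous paragraph to a map $\Delta_Q$ (using $\overline{\Delta}^0_Q$ on the $\mathcal{L}^0_K$-part so that $\Delta_Q$ automatically matches there what the system on $X$ demands), and then checks that $\Delta_Q$ is a perfect isometry, is $N_E(Q)$-stable (immediate), is $*$-compatible for all $\lambda\in\Irr(D)^{C_E(Q)}$ (which reduces to the $*$-compatibility built into ``bijective isometry'' for $\overline{\Delta}_Q$), and that the enlarged family $(\Delta_Y)_{Y\in X\cup\{Q^e\}_{e\in E}}$ satisfies the deflation- and restriction-axioms of a $(G,B)$-local system — the restriction axiom to $C_G(H)$ for $H\supsetneq Q$ holding precisely because $\overline{\Delta}_Q$ extends $\overline{\Delta}^0_Q$.

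The one genuinely technical point — and where I expect the real work to be — is verifying that the inflated $\Delta_Q$ is a \emph{perfect} isometry and not merely a bijective isometry. Here one writes an arbitrary element of $\mathcal{L}_K(C_{\hat L}(Q))$ as the sum of its $\mathcal{L}^0_K$-part and its ``$Q$-inflated'' complement, applies $\overline{\Delta}^0_Q$ to the former and $\overline{\Delta}_Q$ to the latter, and must show that the integrality and separation conditions defining a perfect isometry survive; this works because both constituents come from genuine perfect, resp. bijective, isometries and the $\Irr(Q)$-action is orthogonal, so the mixed terms in the perfectness conditions cancel — but this is exactly the step that uses the central-$2$-subgroup structure of $Q$ in $C_G(Q)$ and carries the content of the lemma. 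Everything else — the reduction to a single $Q$, the $N_E(Q)$-equivariance bookkeeping, and the deflation compatibilities — is formal.
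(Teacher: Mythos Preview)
The paper does not prove this lemma: it is quoted as Lemma~3.11 of \cite{puigusami93} and used as a black box, with no argument supplied. So there is no paper-proof to compare your proposal against.

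As a sketch of how Puig and Usami's original argument goes, your outline has the right shape: the reduction to a single $Q$ by $E$-equivariance and maximality, and the passage between $\Delta_Q$ and its deflation $\overline{\Delta}_Q$ modulo the central $p$-subgroup $Q$, are indeed the organising ideas. But what you have written is a proof \emph{plan}, not a proof. The step you yourself flag --- that the map inflated from $\overline{\Delta}_Q$ is a \emph{perfect} isometry and not merely a bijective one --- is where all the content lies, and your sentence ``the mixed terms in the perfectness conditions cancel'' is an expectation, not an argument. In \cite{puigusami93} this is handled by an explicit analysis via the generalised decomposition map and the $*$-structure (their \S3), and it requires the full technical apparatus of their definition of a $(G,B)$-local system, which you have deliberately elided. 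A smaller point: you write that $*$-compatibility is ``built into `bijective isometry' for $\overline{\Delta}_Q$'', but it is not --- $*$-compatibility is an additional condition, and checking it for the inflated map is part of the work.

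For the purposes of the present paper none of this matters: the lemma is cited, not proved, and your task would simply be to invoke \cite[3.11]{puigusami93}.
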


To prove Theorem \ref{maintheorem}, we need to show the existence of $(G,B)$-local systems for blocks of $\cO G$ with defect group $D=(C_2)^5$ and inertial quotient whose action on $D$ has at least a fixed point. From Proposition \ref{sambalecalcs}, this means that we are interested in blocks with inertial quotient that is cyclic, or $C_3 \times C_3$, or $(C_7 \rtimes C_3)_1$. The cyclic case has been done in \cite{wa05}. The PhD thesis from which this paper is extracted contains detailed computations of each case, but here we include a shorter proof that uses GAP \cite{gap}, a method originally described in \cite{arsa19}.

\begin{proposition} \label{gblocalnoncyclic}
Let $G$ be a finite group and $B$ be a block of $\cO G$ with defect group $D \cong (C_2)^5$ and inertial quotient $E \in\{C_3 \times C_3, (C_7 \rtimes C_3)_1\}$ (as denoted in Proposition \ref{sambalecalcs}). Then there is a $(G,B)$-local system on the set of all subgroups of $D$.
\end{proposition}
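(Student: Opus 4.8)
The plan is to apply the inductive criterion of Puig and Usami recalled in Lemma \ref{extend}: starting from the base case $X = \{D\}$ (where a $(G,B)$-local system always exists by \cite[3.4.2]{puigusami93}), we proceed down through the upwardly closed $E$-stable sets of subgroups of $D$, and at each step, for a maximal subgroup $Q$ not yet in $X$, we must extend the isometry $\overline{\Delta}^0_Q$ on $\mathcal{L}^0_K(\overline{C_{\hat{L}}(Q)})$ to an $N_E(Q)$-stable bijective isometry $\overline{\Delta}_Q : \mathcal{L}_K(\overline{C_{\hat{L}}(Q)}) \to \mathcal{L}_K(\overline{C_G(Q)}, \overline{e}^{C_G(Q)})$. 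Since $D$ is abelian and $B$ is therefore a controlled block, $C_G(Q)$ has the block $e^{C_G(Q)}$ with defect group $D$ and inertial quotient $C_E(Q)$, so the quotient blocks $\overline{e}^{C_G(Q)}$ and $\overline{C_{\hat{L}}(Q)}$ both have defect group $D/Q$ and inertial quotient (a subquotient related to) $C_E(Q)$; in particular the two sides of $\overline{\Delta}_Q$ are, up to Morita equivalence, determined by $D/Q$ and the action of $C_E(Q)$ on it. The whole problem thus reduces to a finite list of ``local'' extension problems indexed by the $E$-orbits of subgroups $Q \leq D$, each involving only the small groups $D/Q$ and $C_E(Q)$.

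Next I would enumerate these local problems for the two inertial quotients in question. For $E = C_3 \times C_3$ acting on $(C_2)^5$ as $(C_3)_1 \times (C_3)_1$ on a $(C_2)^2 \oplus (C_2)^2$ summand with a trivial $C_2$ summand, the possible centralizers $C_E(Q)$ are $1$, $C_3$ (two conjugacy classes of actions, but here always the fixed-point-containing one $(C_3)_1$), and $C_3\times C_3$ itself; for $E = (C_7\rtimes C_3)_1$ the possible $C_E(Q)$ are $1$, $C_3$, $C_7$, and $(C_7\rtimes C_3)_1$, with $C_D(E) = (C_2)^2$ throughout. When $C_E(Q)$ is cyclic the required extension is already available: this is exactly the content of Watanabe's work \cite{wa05} on the cyclic case, which I would cite to dispose of all those steps. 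This leaves genuinely only the steps where $C_E(Q) \in \{C_3\times C_3, (C_7\rtimes C_3)_1\}$, i.e. where $Q \leq C_D(E)$, and there the relevant quotient $D/Q$ ranges over $(C_2)^k$ for $k \leq 5$ with $E$ acting faithfully with a nontrivial fixed subgroup.

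For those remaining cases I would follow the computational method of \cite{arsa19}: the extension of $\overline{\Delta}^0_Q$ to $\overline{\Delta}_Q$ is a question about lifting an isometry of the sublattices $\mathcal{L}^0_K$ to the full generalised character lattices $\mathcal{L}_K$ of two twisted group algebras whose character tables, decomposition matrices, and the relevant $*$-action of $\mathcal{L}_K(D)^{C_E(Q)}$ are all explicitly computable over the small groups $\overline{C_{\hat L}(Q)} \cong (D/Q)\rtimes \hat{C_E(Q)}$ and $\overline{C_G(Q)}$ (whose character theory is pinned down by Proposition \ref{sambalecalcs} together with the Puig equivalence from \cite{kul85}). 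Concretely one sets up the finitely many constraints — isometry, compatibility with decomposition maps, $N_E(Q)$-stability, and the Brou\'e--Puig $*$-compatibility — as a linear/combinatorial search and verifies in GAP \cite{gap} that a solution exists in each case. The main obstacle I anticipate is precisely the case $E = (C_7\rtimes C_3)_1$ with $C_E(Q)$ non-abelian: here $\hat{E}$ is a genuine $k^\times$-central extension, the twisted group algebra $k_*\hat{L}$ is not an ordinary group algebra, and one must be careful that the perfect isometries produced respect the twisting and the $*$-action simultaneously; checking that the combinatorial search correctly encodes the twisted setting (rather than, say, silently solving the untwisted analogue) is where the argument is most delicate, and is the step I would write out in most detail.
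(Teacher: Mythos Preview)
Your overall strategy matches the paper's: induct via the Puig--Usami criterion, dispose of the $C_E(Q)=1$ and cyclic $C_E(Q)$ cases by \cite[4.4]{puigusami93} and \cite{wa05} respectively, and handle the remaining $C_E(Q)=E$ cases by a GAP computation of orthogonal embeddings as in \cite{arsa19}. Two points, however, need correction.

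First, you have the location of the twisting wrong. The Schur multiplier of $C_7 \rtimes C_3$ is trivial, so for $E = (C_7 \rtimes C_3)_1$ the extension $\hat E$ always splits and $\cO_*\hat L \cong \cO(D \rtimes E)$; there is no genuinely twisted case there. The twisting occurs instead for $E = C_3 \times C_3$, whose Schur multiplier is $C_3$: here the proof must bifurcate into a split case (where $k(B)=32$, $l(B)=9$ and $L \cong (A_4 \times A_4) \times C_2$) and a non-split case (where $k(B)=16$, $l(B)=1$ and $\cO_*\hat L$ is realised by a nonprincipal block of $((C_2)^4 \rtimes 3^{1+2}_+) \times C_2$). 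The paper first establishes which case one is in by a local argument using \cite{wa91} and the classification in \cite{ea17}, and then runs the GAP step separately in each case with different character data. Your proposal does not mention this split/non-split dichotomy at all, so the ``main obstacle'' you anticipate is in the wrong place.

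Second, citing \cite{wa05} for the cyclic $C_E(Q)$ case gives you \emph{an} extension $\overline{\Delta}_Q$, but not automatically an $N_E(Q)$-stable one when $N_E(Q) \supsetneq C_E(Q)$. This genuinely arises for $E = C_3 \times C_3$ when $C_E(Q) = C_3$ but $N_E(Q) = E$, and the paper devotes a separate argument to it: one shows that $E/C_E(Q)$ acts trivially (split case) or compatibly (non-split case) on $\Irr_k(\overline{C_G(Q)},\overline{e_Q})$, by counting simple $k(\overline{N_G(Q,e_Q)})\overline{e_Q}$-modules and invoking \cite[3.14]{puigusami93}. You cannot fold this into the computational search, because the action of $N_E(Q)$ on $\Irr(\overline{e_Q})$ is not a priori known; it has to be deduced. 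For $E = (C_7 \rtimes C_3)_1$ this issue evaporates, since whenever $C_E(Q) = C_3$ one checks $N_E(Q) = C_E(Q)$ (and incidentally $C_E(Q) = C_7$ never occurs on its own: if $C_7 \leq C_E(Q)$ then already $C_E(Q) = E$).
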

\begin{proof}
The strategy is to proceed with an inductive argument on $X$, an upwardly closed $E$-stable set of subgroups of $D$. As a base case, when $X=\{ D \}$, a $(G,B)$-local system on $X$ exists by \cite[3.4.2]{puigusami93}. Now suppose that there is a $(G,B)$-local system on $X$, and let $Q$ be a subgroup of $D$ maximal with respect to the property $Q \not \in X$. We consider $\tilde{X} = X \cup \{Q^x\}_{x \in E}$, and prove that the isometry $\overline{\Delta}^0_Q$ can always be extended to an $N_E(Q)$-stable isometry. Then Lemma \ref{extend} proves the result. 

Keeping the notation of Lemma \ref{extend}, put $\overline{C_Q}:= \overline{C_{\hat{L}}(Q)}$ and $\overline{e_Q}:= \overline{e}^{C_G(Q)}$. We can automize the computation of all extensions of a given $\overline{\Delta}^0_Q$, as follows: let $\Irr(\overline{C_Q}) = \{\chi_j\}_{j=1}^{k(\overline{C_Q})}$. We fix a basis $\{\theta_i\}_{i=1}^{k(\overline{C_Q})-l(\overline{C_Q})}$ of $\mathcal{L}_K^0(\overline{C_Q})$, and define the $k(\overline{C_Q}) \times(k(\overline{C_Q})-l(\overline{C_Q}))$-matrix $M^0:= (m_{ij})$, where $\theta_i = \sum_{j=1}^{k(\overline{e_Q})} a_{ji} \chi_j$. Then $N^0:= M^t M = (\theta_i, \theta_j) = (\overline{\Delta}^0_Q(\theta_i), \overline{\Delta}^0_Q(\theta_j))$. Finding an extension of $\overline{\Delta}^0_Q$ is equivalent to finding a solution of the equation $Y^t Y = N^0$ with $k(\overline{e_Q}$ rows, as each fixed solution determines equations $\overline{\Delta}^0_Q(\theta_i) = \sum_{j=1}^{k(\overline{e_Q})} y_{ji} \psi_j$, where $\Irr(\overline{e_Q}) = \{\psi_j\}_{j=1}^{k(\overline{e_Q})}$, and hence allows to define the extension $\overline{\Delta}(\chi_j):=\psi_j$. Any time the solution is unique up to permutations and signs of rows, $N_E(Q)$-stability will be automatically guaranteed. All solutions can be computed by running the command $\texttt{OrthogonalEmbeddings}$ on $N^0$ in GAP \cite{gap}.

We summarize: we need to apply Lemma \ref{extend} at each induction step, so we need to build an $N_E(Q)$-stable extension of a given isometry $\Delta_Q^0$. For each $Q$ in which $C_E(Q)$ is cyclic we use the main theorem of \cite{wa05} to say that $\Delta^0_Q$ can be extended. Otherwise, we choose a basis of $\mathcal{L}^0_K (C_{\hat{L}}(Q))$, compute the matrix $N^0$ and run the command $\texttt{OrthogonalEmbeddings}$ on $N^0$, and determine an extension of $\Delta^0_Q$. Then, in each case we need to show that this extension is $N_E(Q)$-stable.

\sloppy Suppose that $E=C_3 \times C_3$. From Proposition \ref{sambalecalcs}, either $k(B)=32$, $l(B)=9$ or $k(B)=16$, $l(B)=1$. Let $D=[D,E] \times C_D(E) = P \times R$ where $R=C_D(E)=C_2$. From Theorem 1 in \cite{wa91}, $k(B)=k(e^{C_G(R)})$, and from Theorem 1.22 in \cite{sam14} $k(e^{C_G(R)})=2k(\overline{e^{C_G(R)}})$. Since $\hat{L}$ is determined locally and $N_G(D,e) \leq C_G(R)$, then $e^{C_G(R)}$ determines the same group $\hat{L}$ as $B$. Then, from the classification in \cite{ea17}, the central extension $\hat{L}/R$ is split if and only if $k(\overline{e^{C_G(R)}})=16$. Otherwise, $k(\overline{e^{C_G(R)}})=8$ and $\hat{L}/R$ is not split. Since $R$ is a direct factor of $\hat{L}$, this implies that $k(B)=32$ if and only if $\hat{L}$ splits, and $k(B)=16$ otherwise. 

\begin{itemize}
\item[($A$)] First we investigate the situation with $k(B)=32$. In this case, $\cO_* \hat{L} \cong \cO L$ (see \cite[10.4]{thevenaz}) where $L=D \rtimes E \cong (A_4 \times A_4) \times C_2$. Then:

\item Suppose that $C_E(Q)=1$. Then $\overline{e_Q}$ is nilpotent and there is a unique extension $\overline{\Delta}_Q$, defined in \cite[4.4]{puigusami93}. 
\item Suppose that $C_E(Q)=C_3$. Then $\overline{e_Q}$ has cyclic inertial quotient, so by the main theorem of \cite{wa05} there is an extension $\overline{\Delta}_Q$. if $Q=C_2$, $N_E(Q) = C_3$ and hence $N_E(Q)$-stability is automatic. Otherwise, $N_E(Q)=E$, and $N_E(Q)/C_E(Q)$ acts trivially on $\Irr(\overline{C_L(Q)})$. We want to show that $E$ also fixes every element in $\Irr(\overline{C_G(Q)}, \overline{e_Q})$; equivalently, we need to show that $E$ acts trivially on $\Irr_k(\overline{C_G(Q)},\overline{e_Q})$. We use an argument based on \cite[4.9]{puigusami93}:

Suppose that $E$ does not act trivially: For any $\phi \in \Irr_k(\overline{C_G(Q)}, \overline{e_Q})$ the induced character $\operatorname{Ind}^{\overline{N_G(Q,e_Q)}}_{\overline{C_G(Q)}}( \phi)$ is an irreducible Brauer character. Since $\overline{N_G(Q,e_Q)}/\overline{C_G(Q)}\cong E/C_E(Q)=C_3$, there is just one isomorphism class of simple $k(\overline{N_G(Q,e_Q)})\overline{e_Q}$-modules. 

Now suppose that $E$ acts trivially. Since $E/C_E(Q)$ is cyclic, then each $\phi \in \Irr_k(\overline{C_G(Q)}, \overline{e_Q})$ can be extended to a Brauer character of $\overline{N_G(Q,e_Q)}$, and the induced character is the sum of three irreducible Brauer characters: hence, there are exactly nine isomorphism classes of simple $k(\overline{N_G(Q,e_Q)})\overline{e_Q}$-modules. So the number of simple $k(\overline{N_G(Q,e_Q)})\overline{e_Q}$-modules is either $1$ or $9$, and this is determined by the action of $E/C_E(Q)$ on $\Irr_k(\overline{C_G(Q)}, \overline{e_Q})$.

From Lemma 3.14 in \cite{puigusami93}, there is a bijection that preserves defect groups and inertial quotients between blocks of $k(\overline{N_L(Q)})$ and blocks of $k(\overline{N_G(Q,e_Q)})$ that cover $\overline{e_Q}$. In particular, in our case $k(\overline{N_L(Q)})$ always has three blocks with three simple modules each, so in our situation $k(\overline{N_G(Q,e_Q)})\overline{e_Q}$ also has nine simple modules: therefore, $E/C_E(Q)$ acts trivially on $\Irr_k(\overline{C_G(Q)},\overline{e_Q})$. In particular, $\overline{\Delta}_Q$ is $N_E(Q)$-stable.
\item Suppose that $C_E(Q)=E$. Then either $Q=1$ (in which case $\overline{e_Q}=e$) or $Q=R=C_2$. From Theorem 1 in \cite{wa91}, $k(e)=32$, $l(e)=9$, and therefore, when $Q=R$, $k(\overline{e_Q})=16$ and $l(\overline{e_Q})=9$. The GAP program shows that in each case there is a unique solution with $32$ (when $Q=1$) or $16$ (when $Q=R$) irreducible characters, which defines an extension $\overline{\Delta}_Q$. Since $N_E(Q)=C_E(Q)$, it is automatically $N_E(Q)$-stable.

\item[($B$)] Suppose that $k(B)=16$. Then the central extension $\hat{E}$ does not split, and we have $\cO_* \hat{L} \cong \cO L' b'$, where $L'=((C_2)^4 \rtimes 3^{1+2}_{\pm}) \times C_2$, and $b'$ is a nonprincipal block of $\cO L'$. Note that it does not matter whether we pick the central extension $3^{1+2}_+$ or $3^{1+2}_-$ nor which nonprincipal block we choose, because all these blocks are all Morita equivalent by the classification in \cite{ea17}.
\item Suppose that $C_E(Q)=1$. Then $\overline{e_Q}$ is nilpotent and there is a unique extension $\overline{\Delta}_Q$, defined in \cite[4.4]{puigusami93}. 
\item Suppose that $C_E(Q)=C_3$. Then $\overline{e_Q}$ has cyclic inertial quotient, so by the main theorem of \cite{wa05} there is an extension $\overline{\Delta}_Q$. As in case (A), we need to show $N_E(Q)$-stability whenever $N_E(Q)=E$. We use an identical strategy, except that in this case we have to consider the situation of $E/C_E(Q)$ \emph{not} acting trivially on $\Irr_k(\overline{e_Q})$, which determines the number of simple $k(\overline{N_G(Q,e_Q)})\overline{e_Q}$-modules to be $1$. We can always choose the extension $\overline{\Delta}_Q$ in a way that agrees with the action of $E$ on $\Irr_k(\cO_* \overline{C_{\hat{L}}(Q)})$ and $\Irr_k(\overline{e_Q})$, which implies $N_E(Q)$-stability by Lemma 3.14 in \cite{puigusami93}. 
\item Suppose that $C_E(Q)=E$. Then either $Q=1$ (in which case $\overline{e_Q}=e$) or $Q=R=C_2$. From Theorem 1 in \cite{wa91}, $k(e)=16$, $l(e)=8$, and therefore, when $Q=R$, $k(\overline{e_Q})=8$ and $l(\overline{e_Q})=1$. GAP shows that in each case there is a unique solution with $16$ (when $Q=1$) or $8$ (when $Q=R$) irreducible characters, which defines an extension $\overline{\Delta}_Q$. Since $N_E(Q)=C_E(Q)$, it is automatically $N_E(Q)$-stable.

\end{itemize}

Suppose that $E=(C_7 \rtimes C_3)_1$. From Proposition \ref{sambalecalcs}, $k(B)=32$ and $l(B)=5$. We have $\cO_* \hat{L} \cong \cO L$ (again from \cite[10.4]{thevenaz}) where $L=D \rtimes E$. Let $D=[D,E] \times C_D(E) = P \times R$ where $R=C_D(E)=(C_2)^2$.
Note that, while a priori $C_E(Q) \in \{1, C_3, C_7, C_7 \rtimes C_3 \}$, if $C_7 \subseteq C_E(Q)$ then $C_E(Q)=C_7 \rtimes C_3$. So we only have to consider three possibilities for $C_E(Q)$.

\begin{itemize}
\item Suppose that $C_E(Q)=1$. Then $\overline{e_Q}$ is nilpotent and there is a unique extension $\overline{\Delta}_Q$, defined in \cite[4.4]{puigusami93}. 
\item Suppose that $C_E(Q)=C_3$. Then $\overline{e_Q}$ has cyclic inertial quotient, so by the main theorem of \cite{wa05} there is an extension $\overline{\Delta}_Q$. In this case note that $Q \cap P = C_2$, so since $C_7 \lhd E$ does not centralise $Q \cap P$ then $N_E(Q)=C_E(Q)$, so $\overline{\Delta}_Q$ is automatically $N_E(Q)$-stable.
\item Suppose that $C_E(Q)=E$. Then $Q \in \{1, C_2, (C_2)^2\}$. From Proposition \ref{sambalecalcs}, $k(e)=32$. In each case GAP shows that there is a unique solution with the appropriate number of irreducible characters, which defines an extension that is automatically $N_E(Q)$-stable since $N_E(Q)=C_E(Q)$. \qedhere
\end{itemize}
\end{proof}

\begin{proposition} \label{index2}
Let $G$ be a finite group and let $B$ be a block of $\cO G$ with defect group $D \cong (C_2)^5$ and inertial quotient $E$ such that $|C_D(E)|\neq 1$. Suppose that there is $N \lhd G$ with $[G:N]=2^i$ and let $b$ be the unique block of $\cO N$ covered by $B$. Let $Q \leq D$ with $G=NQ$. Then $B$ is Morita equivalent to the block $b \otimes \cO Q$ of $\cO (N \times Q)$. 
\end{proposition}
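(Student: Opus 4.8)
The plan is to verify the hypotheses of Watanabe's Lemma~\ref{wat00} by building the perfect isometry it requires from two $(G,B)$-local systems in the sense of Section~3 — one attached to $B$ itself, one attached to the block of $\cO C_G(Q)$ Brauer-corresponding to $B$ along $Q$ — and then composing them. First I would settle the choice of $Q$. Since $b$ is the unique block of $\cO N$ covered by $B$ it is $G$-stable, so by \cite[15.1]{alp151} a defect group of $b$ is $D\cap N$ and $[D:D\cap N]=[G:N]=2^i$; in particular $G=DN$, whence $G/N\cong D/(D\cap N)$ is abelian. Fix a root $e$ of $B$ in $\cO C_G(D)$ and set $E=N_G(D,e)/C_G(D)$. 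The abelianness of $G/N$ forces $E$ to fix $D/(D\cap N)$ pointwise: if $\sigma\in E$ is induced by $n\in N_G(D,e)$ and $d\in D$, then $\sigma(d)d^{-1}=ndn^{-1}d^{-1}\in D$ has image $[\bar n,\bar d]=1$ in $G/N$, so $\sigma(d)d^{-1}\in D\cap N$. Hence $[D,E]\le D\cap N$, and as $D=C_D(E)\times[D,E]$ by coprime action we get $D=C_D(E)(D\cap N)$, so we may choose a complement $Q$ of $D\cap N$ in $D$ with $Q\le C_D(E)$; then $Q\cap N=1$, $|Q|=2^i$ and $G=NQ$. Any two such complements are isomorphic elementary abelian $2$-groups, so $\cO Q$ is independent of the choice and it suffices to work with this $Q$.

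Next I would set $C=C_G(Q)$ and $e_Q=e^{C}$ as in Lemma~\ref{wat00}; since $D$ is abelian we have $D\le C$, so $(Q,e_Q)$ is a $B$-subpair and $e_Q$ is a block of $\cO C$ with defect group $D$ and root $e$. Because $Q\le C_D(E)$ we have $N_G(D,e)\le C$, hence $N_C(D,e)=N_G(D,e)$, the inertial quotient of $e_Q$ is again $E$, and the twisted group algebra $\cO_*\hat L$ of Section~3 is the same for $e_Q$ as for $B$, since in both cases it is obtained from the block $e^{N_G(D,e)}$ of $\cO N_G(D,e)$. Now $|C_D(E)|\ne1$ together with Proposition~\ref{sambalecalcs} forces $E$ to be cyclic or one of $C_3\times C_3$, $(C_7\rtimes C_3)_1$; in every such case there is a $(G,B)$-local system on the set of all subgroups of $D$ — by \cite{wa05} when $E$ is cyclic and by Proposition~\ref{gblocalnoncyclic} otherwise — and, for exactly the same reason (same defect group, same inertial quotient), also a $(C,e_Q)$-local system on all subgroups of $D$.

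Evaluating both local systems at the trivial subgroup yields perfect isometries $\Delta\colon\mathcal{L}_\K(\hat L)\to\mathcal{L}_\K(G,B)$ and $\Delta'\colon\mathcal{L}_\K(\hat L)\to\mathcal{L}_\K(C,e_Q)$, each compatible with the Brou\'e--Puig construction $*$ for all $\lambda\in\mathcal{L}_\K(D)^{E}$, a set that contains $\Irr(Q)$ since $E$ fixes $D/(D\cap N)\cong Q$ pointwise. Then $I:=\Delta\circ(\Delta')^{-1}\colon\mathcal{L}_\K(C,e_Q)\to\mathcal{L}_\K(G,B)$ is a perfect isometry with $I(\lambda*\chi)=\lambda*I(\chi)$ for all $\lambda\in\Irr(Q)$ and $\chi\in\Irr(e_Q)$ — precisely the hypothesis of Lemma~\ref{wat00}, which then gives $B\cong\cO Q\otimes_\cO b$ as $\cO$-algebras. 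Since $\cO Q\otimes_\cO b$ is the unique block of $\cO(N\times Q)=\cO N\otimes_\cO\cO Q$ covering $b$, this is the Morita equivalence claimed. The crux I expect is the joint existence of the two local systems together with the verification that $e_Q$ carries the same local invariants ($D$, $E$, $\cO_*\hat L$) as $B$ — this is where $|C_D(E)|\ne1$, and hence Proposition~\ref{gblocalnoncyclic} and \cite{wa05}, enters, and it enters twice; once that is secured, composing the isometries and applying Lemma~\ref{wat00} is formal.
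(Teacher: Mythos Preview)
Your proposal is correct and follows essentially the same route as the paper: use \cite{wa05} and Proposition~\ref{gblocalnoncyclic} to get a $(G,B)$-local system and a $(C_G(Q),e_Q)$-local system (both landing in the same $\cO_*\hat L$ since $N_G(D,e)\le C_G(Q)$), then compose the resulting $*$-compatible perfect isometries at the trivial subgroup and feed this into Lemma~\ref{wat00}. Your extra paragraph justifying that one may take $Q\le C_D(E)$ via $[D,E]\le D\cap N$ is a detail the paper leaves implicit, but it is exactly what is needed to make $C_{\hat L}(Q)=\hat L$ and to ensure $\Irr(Q)\subseteq\mathcal{L}_\K(D)^E$.
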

\begin{proof}
\sloppy Whenever $E$ is cyclic of order $3$, $5$, $7$ or $15$, Proposition \ref{sambalecalcs} implies that $l(B)=|E|$. Then the main theorem of \cite{wa05} shows that there is a $(G,B)$-local system on the set $X$ of all subgroups of $D$. When $E=C_3 \times C_3$ or $(C_7 \rtimes C_3)_1$, there is a $(G,B)$ local system on $X$ as shown in Proposition \ref{gblocalnoncyclic}.

In particular in each case we have a perfect isometry $$\Delta_1 : \mathcal{L}_\K(\hat{L}) \longrightarrow \mathcal{L}_\K (G, B)$$ that respects the $*$-construction.

We consider $C_G(Q)$ and $e^{C_G(Q)}$: this block also has defect group $D$ and inertial quotient $E$, and since $N_G(D,e) \leq C_G(Q)$, and hence $N_{C_G(Q)}(D,e) = N_G(D,e)$, it determines the same twisted group algebra $\cO_* \hat{L}$ as $B$. Again by the results above, there is a $(C_G(Q),e^{C_G(Q)})$-local system, which gives perfect isometry 
$$\Delta^Q_1: \mathcal{L}_\K (C_{\hat{L}}(Q))  \longrightarrow \mathcal{L}_\K (C_G(Q), e^{C_G(Q)})$$
that respects the $*$-construction.

Now $P= \Delta_1 \circ (\Delta_1^Q)^{-1}$ is a perfect isometry that respects the $*$-construction, so we can apply Lemma \ref{wat00}. We are done.
\end{proof}

%

\section{Crossed products and Picard groups}
We recall the key concepts from \cite{kul95}. Given a finite group $G$ and a ring with identity $A$, $A$ is a $G$-graded ring if there is a decomposition $A=\bigoplus_{g \in G} A_g$ as additive subgroups such that $A_g A_h \subseteq A_{gh}$, and $A_1$ is a subring of $A$ containing $1$. \\
A $G$-graded ring $A$ is said to be a \emph{crossed product} of $A_1$ with $G$ if for any $g \in G$, $A_g$ contains at least one unit. We call two $G$-graded rings $A$ and $B$ \emph{weakly equivalent} if there is an isomorphism of rings $\phi: A \to B$ such that $\phi(A_g) \subseteq B_g$ for all $g \in G$. Moreover, we say they are \emph{equivalent} if $\phi$ restricts to the identity map on $A_1 \cong B_1$.

A key result from K\"ulshammer's article is a characterization of all the possible crossed products of a given ring $R$ and a group $G$:
\begin{theorem} \label{crossedproductseqclass}
\sloppy The equivalence classes of crossed products of a ring $R$ with a group $G$ are parametrized by pairs $(\omega, \zeta)$, where $\omega: G \to \Out(R)$ is a homomorphism whose corresponding $3$-cocycle in the sense of \cite{kul95} in $H^3(G, \mathcal{U}(Z(R)))$ vanishes, and $\zeta \in H^2 (G, \mathcal{U}(Z(R)))$ where the action of $G$ on $\mathcal{U}(Z(R))$ is induced by $\omega$.
Moreover, weak equivalence classes of crossed products correspond to orbits of $\Aut(R)$ on the set of possible $(\omega, \zeta)$.
\end{theorem}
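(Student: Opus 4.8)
The plan is to transpose the Eilenberg--MacLane classification of group extensions to the ring-theoretic framework of \cite{kul95}, the one new feature being that a crossed product of $R$ with $G$ retains only an \emph{outer} action of $G$ on $R$. Given a crossed product $A=\bigoplus_{g\in G}A_g$ with $A_1=R$, I would first pick a unit $u_g\in A_g$ for each $g$, normalised by $u_1=1$. Since $u_g$ is a unit of $A$ we have $A_g=Ru_g=u_gR$, free of rank one as a left and right $R$-module, so conjugation by $u_g$ is an automorphism $\alpha_g\in\Aut(R)$ and there is a unique $c(g,h)\in\mathcal{U}(R)$ with $u_gu_h=c(g,h)u_{gh}$. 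A short computation gives $\alpha_g\alpha_h=\Inn(c(g,h))\,\alpha_{gh}$, so that $\omega(g):=\alpha_g\,\Inn(R)$ is a homomorphism $G\to\Out(R)$, associativity of $A$ yields the twisted identity
\[
c(g,h)\,c(gh,k)=\alpha_g\!\left(c(h,k)\right)c(g,hk),
\]
and $\alpha_g$ restricts on $Z(R)$ to an automorphism depending only on $\omega(g)$ --- this is the action of $G$ on $\mathcal{U}(Z(R))$ in the statement.

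Next I would fix once and for all a set-theoretic lift $g\mapsto\alpha_g^{\circ}\in\Aut(R)$ of $\omega$ with $\alpha_1^{\circ}=\mathrm{id}$ together with units $c^{\circ}(g,h)$ satisfying $\alpha_g^{\circ}\alpha_h^{\circ}=\Inn(c^{\circ}(g,h))\alpha_{gh}^{\circ}$. Then $c^{\circ}(g,h)c^{\circ}(gh,k)$ and $\alpha_g^{\circ}(c^{\circ}(h,k))c^{\circ}(g,hk)$ induce the same inner automorphism, so their ratio is a $3$-cochain $\omega_3$ valued in $\mathcal{U}(Z(R))$; one checks that $\omega_3$ is a $3$-cocycle whose class in $H^3(G,\mathcal{U}(Z(R)))$ depends only on $\omega$. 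A crossed product with outer action $\omega$ exists if and only if this class vanishes: necessity is immediate from the displayed identity, and for sufficiency one modifies $c^{\circ}$ by a central cochain into a genuine twisted cocycle $c_\omega$ and builds $A$. Granting vanishing and fixing such a $c_\omega$, for any crossed product $A$ realising $\omega$ one may choose the $u_g$ so that conjugation by $u_g$ is exactly $\alpha_g^{\circ}$; then $\zeta_A:=c_A/c_\omega$ lies in $Z^2(G,\mathcal{U}(Z(R)))$, and the remaining freedom $u_g\mapsto z_gu_g$ with $z_g\in\mathcal{U}(Z(R))$, $z_1=1$, changes $\zeta_A$ by a coboundary. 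This attaches a well-defined pair $(\omega,[\zeta_A])$ to each equivalence class of crossed products.

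Conversely, to each $\omega$ with vanishing obstruction and each $\zeta\in Z^2(G,\mathcal{U}(Z(R)))$ I would associate $A(\omega,\zeta):=\bigoplus_{g\in G}Ru_g$ with product $(ru_g)(su_h):=r\,\alpha_g^{\circ}(s)\,\zeta(g,h)c_\omega(g,h)\,u_{gh}$; the chosen normalisations make this an associative crossed product with invariants $(\omega,[\zeta])$. Transporting graded bases shows $A(\omega,\zeta)\cong A(\omega,\zeta')$ over $A_1=R$ precisely when $[\zeta]=[\zeta']$, so the equivalence classes of crossed products of $R$ with $G$ are in bijection with the pairs $(\omega,[\zeta])$ as described. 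Finally, any $\phi\in\Aut(R)$ induces a graded ring isomorphism $A(\omega,\zeta)\to A({}^{\phi}\omega,{}^{\phi}\zeta)$ that restricts to $\phi$ on $A_1$, while conversely a weak equivalence restricts to some $\phi\in\Aut(R)$ on $A_1$; hence the weak equivalence classes are exactly the $\Aut(R)$-orbits on the set of pairs $(\omega,[\zeta])$.

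The delicate part is the obstruction step: checking that $\omega_3$ is a cocycle and, above all, that its class in $H^3(G,\mathcal{U}(Z(R)))$ is independent of the chosen lift $\alpha_g^{\circ}$ and of the chosen $c^{\circ}$, so that vanishing of the $3$-cocycle is a genuine condition on $\omega$ alone; and then verifying that the two assignments $A\leftrightarrow(\omega,[\zeta])$ are mutually inverse and compatible with (weak) equivalence. This amounts to the standard but somewhat intricate cochain bookkeeping in the $\omega$-twisted module $\mathcal{U}(Z(R))$, which is carried out in \cite{kul95}.
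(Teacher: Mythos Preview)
The paper does not prove this theorem at all: it is stated as a quotation of the main classification result of \cite{kul95} and used as a black box throughout Section~4. There is therefore nothing in the paper to compare your argument against.

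That said, your sketch is a correct outline of the standard Schreier--Eilenberg--MacLane argument adapted to crossed products, and it is essentially the proof that appears in \cite{kul95}: extract $\omega$ from conjugation by graded units, identify the obstruction to associativity as a class in $H^3(G,\mathcal{U}(Z(R)))$, and then show that the remaining freedom is a torsor under $H^2(G,\mathcal{U}(Z(R)))$. Your final paragraph correctly flags the only genuinely nontrivial verifications (well-definedness of the obstruction class and the bookkeeping for mutual inversion). One small point worth tightening: when you say you may choose the $u_g$ so that conjugation by $u_g$ is exactly $\alpha_g^{\circ}$, you are using that $\alpha_g$ and $\alpha_g^{\circ}$ differ by an inner automorphism of $R$, hence by conjugation by some unit of $R$, which can be absorbed into $u_g$; this is fine, but it is the step where the freedom in $u_g$ by arbitrary units of $R$ (not just central ones) is spent, and it is worth saying so explicitly before you assert that only the central freedom $u_g\mapsto z_gu_g$ remains.
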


Given a Morita equivalence class we consider a canonical representative of it: a basic algebra. It is well known that two Morita equivalent algebras have isomorphic basic algebras, and that any algebra is Morita equivalent to its basic algebra. This is compatible with the crossed product structure, as the following lemma shows.

\begin{lemma} \label{crossact}
Let $G$ be a finite group, $N \lhd G$ with $[G:N]$ a prime $\ell \neq p$. Let $X=G/N$. Let $B$ a block of $\cO G$ that covers a $G$-stable block $b$ of $\cO N$, and let $f$ be a basic idempotent of $b$, i.e. an idempotent such that $fbf$ is a basic algebra of $b$. Then $fBf$ is a crossed product of $fbf$ with $X$, and $fBf$ is Morita equivalent to $B$.
\end{lemma}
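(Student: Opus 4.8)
The plan is to establish two things separately: that $fBf$ is Morita equivalent to $B$, and that $fBf$ carries the structure of a crossed product of $fbf$ with $X = G/N$.

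For the Morita equivalence, I would argue that $f$ is a full idempotent in $B$, i.e.\ $BfB = B$. Since $f$ is a basic idempotent of $b$ we have $bfb = b$, so $f$ is full in $b$; because $B$ covers $b$ and $b$ is $G$-stable, $b$ is a sum of blocks of $\cO N$ all lying over the same $G$-orbit (here just one block), and $Bb = bB = b$ inside $\cO G$ in the sense that $b$ is the identity of the subalgebra $\cO N b \subseteq B$. The key point is that $B = \cO G b$ is generated as a $(b,b)$-bimodule — equivalently as a left $b$-module — by the finitely many units lying in the graded pieces $B_x$ for $x \in X$ (using that $B$ is a crossed product of $\cO N b$ with $X$, which follows from $[G:N]$ being invertible and $b$ being $G$-stable: each coset $xN$ contains an element $g$ with $g^{-1} b g = b$, and $gb$ is a unit of $B_x$). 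Then $BfB \supseteq \sum_{x} B_x (bfb) B_{x^{-1}} = \sum_x B_x b B_{x^{-1}} = B$, using $bfb = b$. Hence $f$ is full in $B$, and it is standard that $fBf$ is then Morita equivalent to $B$ via the bimodules $Bf$ and $fB$.

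For the crossed product structure, set $A_1 := fbf = f(\cO N b)f$ and, for $x \in X$, set $(fBf)_x := f B_x f$ where $B_x$ is the $x$-graded component of the crossed product grading of $B = \cO G b$ over $\cO N b$ with group $X$ (so $B_x = \cO(gN) b$ for $g$ representing $x$). Since $f \in \cO N b = B_1$ is central in... no — $f$ need not be central in $B_1$, but it is an idempotent, so $fBf = \bigoplus_{x} fB_x f$ as $\cO$-modules, and $fB_x f \cdot fB_y f \subseteq f B_x B_y f \subseteq f B_{xy} f$, while $(fBf)_1 = fB_1 f = fbf = A_1$ contains $f = 1_{fBf}$. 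This gives $fBf$ the structure of an $X$-graded ring with identity component $fbf$. To see it is a \emph{crossed product}, I must exhibit a unit of $fBf$ in each graded piece $fB_x f$: pick $g \in G$ lifting $x$ with $g^{-1}bg = b$ (possible as above), so $u_g := gb$ is a unit of $B$ homogeneous of degree $x$; the element $f u_g f$ need not be a unit of $fBf$ directly, so instead I would use that conjugation by $u_g$ induces an algebra automorphism $\sigma_g$ of $b = B_1$, and since $fbf$ is a basic algebra of $b$, the image $\sigma_g(fbf) = \sigma_g(f) b \sigma_g(f)$ is again a basic algebra, hence isomorphic to $fbf$; an isomorphism $fbf \to \sigma_g(f) b \sigma_g(f)$ is realized by multiplication by a suitable unit, and composing with $u_g$ produces an element $v_x \in fB_x f$ that is a unit of $fBf$. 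Concretely: basic idempotents of $b$ are conjugate in $b^\times$, so there is $c \in b^\times$ with $c \sigma_g(f) c^{-1} = f$; then $v_x := (fc) u_g$ lands in $f B_x$, and one checks $f v_x f = v_x$ is invertible in $fBf$ with inverse in $fB_{x^{-1}}f$.

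The main obstacle is precisely this last step — manufacturing a homogeneous unit of $fBf$ in each degree from the homogeneous units of $B$, since the naive compression $f(\,\cdot\,)f$ of a unit is not a unit. The clean way to handle it is to first prove the statement for the degree-generating units and then observe that the set of degrees $x$ for which $fB_x f$ contains a unit of $fBf$ is closed under multiplication and inverses (it is a subgroup of $X$) and contains a generator-set, hence equals $X$; combined with finiteness of $X$ and the fact that $fBf$ is the crossed product one can read off $fBf = \bigoplus_x fB_x f$. Everything else — fullness of $f$, the bimodule Morita equivalence, the graded-ring axioms — is routine once the crossed-product grading of $B$ over $\cO N b$ (which is where $[G:N] = \ell \neq p$ and $G$-stability of $b$ enter) is in hand.
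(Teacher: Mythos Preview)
Your approach matches the paper's. For Morita equivalence both of you argue that $f$ is full in $B$ via $bfb = b$; the paper does it in one line, $BfB = B(bfb)B = BbB = B$, using only that $1_b$ acts as the identity on $B$, which is cleaner than routing through graded pieces. For the crossed-product structure the paper simply cites Proposition~4.15 of \cite{eisele18}, whose content is exactly your hands-on construction: basic idempotents of $b$ are conjugate by units of $b$, so a homogeneous unit $u_g$ of $B$ can be adjusted by some $c \in b^\times$ with $c\,\sigma_g(f)\,c^{-1}=f$ to produce $fcu_g \in fB_xf$, invertible in $fBf$.

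One caveat. Your justification that $B$ is a crossed product of $\cO N b$ with $X$ ``from $[G:N]$ being invertible and $b$ being $G$-stable'' is the wrong way round: it is $\cO G \cdot 1_b$ that is always the crossed product, and $B$ equals it only when $B$ is the \emph{unique} block of $\cO G$ covering $b$ --- and invertibility of $[G:N]$ is precisely what permits several blocks to lie over $b$ (take $N=1$, $G=C_\ell$, $p\nmid\ell$). The paper's proof glosses over the same point, so in its applications this uniqueness is being handled separately (cf.\ Lemma~\ref{gbstarlemma}). Granting the $X$-grading on $B$, your homogeneous-unit construction and fullness argument are correct.
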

\begin{proof}
The group algebra $\cO G$ is a crossed product of $\cO N$ and $X=G/N$. Since $b$ is $G$-stable, $B=\cO G B$ is also a crossed product of $b=\cO N b$ with $X$. The first claim now follows from Proposition 4.15 in \cite{eisele18}, as for two basic idempotents $e$ and $f$ of $A$ the property $eA \cong fA$ holds since basic idempotents are in the same orbit under conjugation by units of $A$. 

Recall that for an algebra $A$ and an idempotent $f$, $A$ and $fAf$ are Morita equivalent if and only if $AfA = A$ \cite[9.9]{thevenaz}. Since $b$ and $fbf$ are Morita equivalent, $bfb = b$, hence:
$$BfB = BbfbB = BbB = B$$\vspace*{-1em}\qedhere
\end{proof}

To apply Theorem \ref{crossedproductseqclass}, in principle we need to determine $\Out(fbf)$ for each block $b$. Instead, we adopt a different point of view. We give the relevant definitions:

Recall that two algebras $A$ and $B$ are Morita equivalent if and only if there is an $A$-$B$-bimodule $M$ and a $B$-$A$-bimodule $N$ such that $M \otimes_B N \cong A$ and $N \otimes_A M \cong B$. The Picard group of a block $b$, denoted by $\Pic(b)$, is the group of $b$-$b$-bimodules that induce a self-Morita equivalence of $b$, where the group operation is given by the tensor product. We are interested in the subgroups $\cT(b) \leq \cE(b) \leq \Pic(b)$, where $\cT(b)$ is defined as the subgroup of all the bimodules in $\Pic(b)$ with trivial source, and $\cE(b)$ as the subgroup of all bimodules with endopermutation source. By \cite{ei19}, $\Pic(b)$ is always a finite group. The following result, extracted from the main theorem of \cite{bkl17}, gives an upper bound for the size of these subgroups.

\begin{lemma}[\cite{bkl17}] \label{picard}
Let $G$ be a finite group, and let $b$ be a block of $\cO G$ with abelian defect group $D$ and inertial quotient $E$. Let $\mathcal{F}$ be the fusion system on $D$ determined by $b$. Let $\operatorname{Pic}(b)$, $\cE(b)$ and $\cT(b)$ be as defined above. Then there are exact sequences:
\begin{equation*}\begin{split}
&1 \longrightarrow \Out_D(A) \longrightarrow \cE(b) \longrightarrow D_\cO (D, \mathcal{F}) \rtimes \Out(D,\mathcal{F})\\
&1 \longrightarrow \Out_D(A) \longrightarrow \cT(b) \longrightarrow \Out(D,\mathcal{F})
\end{split}\end{equation*}
where $A=i \cO G i$ is a source algebra of $b$, $\Aut_D(A)$ is the group of algebra automorphisms of $A$ which fix the image of $D$ in $A$ elementwise, $\Out_D(A)$ is the quotient of $\Aut_D(A)$ by the subgroup of inner automorphisms induced by elements in $(A^D)^\times$, $D_\cO (D, \mathcal{F})$ is the subgroup of $\mathcal{F}$-stable modules of the Dade group of $D$ over $\cO$ and $\Out(D,\mathcal{F}):= \Aut(D,\mathcal{F})/\Aut_\mathcal{F}(D)$. Moreover, $\Out_D(A)$ is isomorphic to a subgroup of $\Hom(E,k^\times)$.
\end{lemma}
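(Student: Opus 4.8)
The plan is to obtain both exact sequences by specialising the structure theorem for Picard groups of blocks proved in \cite{bkl17}, so the task is really one of bookkeeping rather than of new ideas. The starting point is Puig's description of bimodules inducing self-Morita equivalences: every $M \in \cE(b)$, transported to the source algebra $A = i\cO G i$ of $b$, becomes an indecomposable $(A,A)$-bimodule $\tilde{M}$ with endopermutation source inducing a Morita self-equivalence of $A$. Such an $\tilde{M}$ carries two invariants. First, the way $\tilde{M}$ conjugates the image of $D$ inside $A \otimes_\cO A^{\mathrm{op}}$ yields a fusion automorphism $\psi_M \in \Aut(D,\mathcal{F})$, canonically defined up to $\Aut_\mathcal{F}(D)$, hence a class in $\Out(D,\mathcal{F})$; for the abelian (thus controlled) fusion systems relevant to us, $\Aut_\mathcal{F}(D) \cong E$ and $\Out(D,\mathcal{F})$ is the image in $\Out(D)$ of the normaliser of $E$ in $\Aut(D)$. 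Second, the endopermutation $\cO D$-module serving as source of $\tilde{M}$ defines an element $\delta_M$ of the Dade group $D_\cO(D)$, and compatibility of the source with the two-sided module structure forces $\delta_M$ to be $\mathcal{F}$-stable. One then verifies that $M \mapsto (\delta_M,\psi_M)$ is a group homomorphism into $D_\cO(D,\mathcal{F}) \rtimes \Out(D,\mathcal{F})$, the semidirect product being dictated by how the fusion automorphism twists the Dade element.

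Next I would pin down the kernel. An element of $\cE(b)$ dies under this map exactly when $\tilde{M}$ has trivial source and trivial fusion class, and such bimodules are precisely the ``twisted diagonal'' bimodules ${}_\varphi A$ attached to algebra automorphisms $\varphi$ of $A$ fixing the chosen image of $D$ pointwise; two of these are isomorphic as bimodules iff the automorphisms differ by conjugation by a unit of $A^D$. Hence the kernel is $\Out_D(A)$, which gives the first exact sequence. Restricting the whole picture to $\cT(b)$, the subgroup of trivial-source bimodules, forces $\delta_M = 0$, so the map now takes values in $\Out(D,\mathcal{F})$ while the kernel is unchanged; that is the second sequence. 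The final clause, $\Out_D(A) \hookrightarrow \Hom(E,k^\times)$, follows from the known shape of the source algebra of a block with abelian defect group $D$ and inertial quotient $E$: an automorphism of $A$ fixing $D$ elementwise acts on the multiplicity algebra of $b$, and modulo the units of $A^D$ it is determined by a single linear character of $E$ — exactly the computation carried out in \cite{bkl17}.

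The only point where something genuinely has to be checked rather than quoted is the well-definedness and multiplicativity of $M \mapsto (\delta_M,\psi_M)$ — in particular that $\delta_M$ is $\mathcal{F}$-stable and that tensoring bimodules corresponds to the group law on $D_\cO(D,\mathcal{F}) \rtimes \Out(D,\mathcal{F})$ — and this is precisely what occupies \cite{bkl17}; so I expect this to be the main obstacle, but it is already resolved there. Consequently the proof we actually give is short: cite that theorem, observe that for our defect groups $\mathcal{F}$ is controlled so $\Aut_\mathcal{F}(D)\cong E$, and record the resulting sequences in the stated form. The one subtlety to bear in mind for the applications is that the statement only asserts injectivity at the kernel and gives a (not necessarily surjective) map on the right, so in practice we combine it with explicit knowledge of $\mathcal{F}$, $E$ and the $\mathcal{F}$-stable Dade group to bound $|\cT(b)|$ and $|\cE(b)|$.
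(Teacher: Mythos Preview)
Your proposal is correct and aligns with the paper's treatment: the paper gives no proof at all, simply stating the lemma with attribution to \cite{bkl17}, and your sketch faithfully outlines the argument of that reference (the assignment $M\mapsto(\delta_M,\psi_M)$, identification of the kernel with $\Out_D(A)$, and the embedding $\Out_D(A)\hookrightarrow\Hom(E,k^\times)$). If anything you have supplied more detail than the paper itself, which treats the result as a black-box citation.
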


By definition, for two Morita equivalent blocks $b$ and $c$, $\Pic(b) \cong \Pic(c)$. When we have stronger equivalences, we can say more. By definition, if two blocks $b$ and $c$ are source algebra equivalent, as defined in Section $2$, then $\cT(b) \cong \cT(c)$. Similarly, a basic Morita equivalence between $b$ and $c$ implies that $\cE(b) \cong \cE(c)$.

In the proof of our result we use the following information about Picard groups.

\begin{proposition}\label{taugroups}
For a block $b$ of $\cO G$ with defect group $D$, the following holds:
\begin{itemize}\itemsep0em
\item[(1)] If $b = \cO(C_2^3 \rtimes C_7)$, then $\Pic(b)=\cT(b)=C_7 \rtimes C_3$.
\item[(2)] If $b=\cO(C_2^3 \rtimes (C_7 \rtimes C_3))$, then $\Pic(b)=\cT(b)= C_3$.
\item[(3)] If $b =\cO(A_4 \times (C_2^3 \rtimes C_7 ))$, then $\Pic(b)=\cT(b)$, and any maximal odd order subgroup is isomorphic to $C_3 \times (C_7 \rtimes C_3)$.
\item[(4)] If $b=\cO(A_4 \times (C_2^3 \rtimes (C_7 \rtimes C_3)))$, then $\Pic(b)=\cT(b)$, and any maximal odd order subgroup is isomorphic to $C_3 \times C_3$.
\item[(5)] If $b=\cO(A_4 \times A_4 \times C_2)$, then $\Pic(b)=\cT(b)$, and any maximal odd order subgroup is isomorphic to $C_3 \times C_3$.
\item[(6)] The unique maximal odd order subgroup of $\cE(B_0(A_4 \times A_5 \times C_2))$ is isomorphic to $C_3$.
\end{itemize}
Moreover, let $Q$ be a finite abelian $2$-group. Then:
\begin{itemize}\itemsep0em
\item[(7)] $\Pic(\cO(A_4 \times Q))= S_3 \times (Q \rtimes \Aut(Q))$.
\item[(8)] $\Pic(B_0(\cO(A_5 \times Q)))= C_2 \times (Q \rtimes \Aut(Q))$.
\end{itemize}
\end{proposition}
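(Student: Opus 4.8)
The plan is to bound $\cT(b)$ and $\cE(b)$ from above using Lemma~\ref{picard}, to realise those bounds by exhibiting explicit bimodules, and to reduce the tensor-product cases to their indecomposable factors. First I would treat the blocks with a normal abelian defect group: parts (1), (2), and the tensor factors $\cO((C_2)^3\rtimes C_7)$ and $\cO((C_2)^3\rtimes(C_7\rtimes C_3))$ occurring in (3)--(4). Such a block $b=\cO(D\rtimes E)$ with $D=(C_2)^3$ is its own source algebra $A$, and all ``diagonal'' automorphisms of $D\rtimes E$ are present, so $\Out_D(A)=\Hom(E,k^\times)$; since $k$ is algebraically closed of characteristic $2$ (so $3,7\mid|k^\times|$) this is $C_7$ in case (1) and $C_3$ in case (2), using $\Hom(C_7\rtimes C_3,k^\times)=C_3$. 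Computing $\Out(D,\mathcal{F})=N_{\Aut(D)}(\Aut_{\mathcal{F}}(D))/\Aut_{\mathcal{F}}(D)$ inside $\GL_3(2)$ — with $\Aut_{\mathcal{F}}(D)$ the Singer cycle $C_7$ in case (1), whose normaliser is $C_7\rtimes C_3$, and $\Aut_{\mathcal{F}}(D)=C_7\rtimes C_3$ itself in case (2), which is a maximal non-normal subgroup of the simple group $\GL_3(2)$ and hence self-normalising — gives $\Out(D,\mathcal{F})=C_3$ in case (1) and $\Out(D,\mathcal{F})=1$ in case (2). Since $\Aut_{\mathcal{F}}(D)$ acts transitively on the hyperplanes of $D$, the $\mathcal{F}$-stable part of the Dade group $D_\cO(D)$ has no non-trivial element of finite order, so the sequences of Lemma~\ref{picard} collapse to $\cT(b)=\cE(b)=\Out_D(A)\rtimes\Out(D,\mathcal{F})$, the bound being attained by the automorphisms of $D\rtimes E$ and by the trivial-source bimodules ${}_\lambda\bigl(\cO(D\rtimes E)\bigr)$ for $\lambda\in\Hom(E,k^\times)$ (these have trivial source because $\lambda$ vanishes on $D$). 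Combined with the finiteness of $\Pic(b)$ \cite{ei19} and the known description of Picard groups of blocks with normal abelian defect group, this forces $\Pic(b)=\cT(b)$, equal to $C_7\rtimes C_3$ in (1) and $C_3$ in (2).

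For parts (3)--(8) I would also need $\Pic(\cO A_4)$, $\Pic(B_0(\cO A_5))$ and $\Pic(\cO Q)$ for an abelian $2$-group $Q$. The blocks $\cO A_4$ and $B_0(\cO A_5)$ have defect group $(C_2)^2$ and a common fusion system $\mathcal{F}$ with $\Aut_{\mathcal{F}}(D)=C_3$ and $\Out(D,\mathcal{F})=C_2$, but lie in different Morita equivalence classes by the classification of blocks with Klein-four defect group \cite{erd82}: $\cO A_4=\cO((C_2)^2\rtimes C_3)$ is inertial, so $\Out_D(A)=\Hom(C_3,k^\times)=C_3$ and, arguing as above, $\Pic(\cO A_4)=\cT(\cO A_4)=C_3\rtimes C_2=S_3$; whereas $B_0(\cO A_5)$ is the exceptional Klein-four class, where $\Out_D(A)=1$ and $\Pic(B_0(\cO A_5))=\cT(B_0(\cO A_5))=C_2$ (realised by the automorphism of $A_5$ coming from $S_5$). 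Finally $\cO Q$ is nilpotent, and $\Pic(\cO Q)=Q\rtimes\Aut(Q)$ is classical, the normal factor $\Hom(Q,\cO^\times)\cong Q$ arising from the $1$-dimensional (non-trivial) endo-permutation sources.

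To assemble (3)--(8) I would use a product formula for Picard groups of tensor products of blocks: if $b=b_1\otimes b_2$ with $b_1,b_2$ not source algebra equivalent (resp.\ not basic Morita equivalent), then $\cT(b)\cong\cT(b_1)\times\cT(b_2)$ (resp.\ $\cE(b)\cong\cE(b_1)\times\cE(b_2)$, and likewise for $\Pic$), while two isomorphic tensor factors contribute a wreath factor. In (3)--(8) the relevant pairs of tensor factors either have non-isomorphic defect groups (e.g.\ $(C_2)^2$, $(C_2)^3$, $Q$), or — in (6) — share the defect group $(C_2)^2$ and the fusion system but lie in distinct Morita equivalence classes ($\cO A_4$ against $B_0(\cO A_5)$), so no ``mixing'' occurs; the only genuine coincidence of factors is the pair of $A_4$'s in (5). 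This yields $\Pic(b)=S_3\times(C_7\rtimes C_3)$ in (3), $S_3\times C_3$ in (4), $(S_3\wr C_2)\times\Pic(\cO C_2)$ in (5), $S_3\times(Q\rtimes\Aut(Q))$ in (7) and $C_2\times(Q\rtimes\Aut(Q))$ in (8); since every tensor factor in (3)--(5) satisfies $\Pic=\cT$, so does $b$ there. Reading off maximal odd-order subgroups — of $S_3\times(C_7\rtimes C_3)$, of $S_3\times C_3$, of $S_3\wr C_2$, and in (6) of $\cE(\cO A_4)\times\cE(B_0(\cO A_5))\times\cE(\cO C_2)=S_3\times C_2\times C_2$ — produces $C_3\times(C_7\rtimes C_3)$, $C_3\times C_3$, $C_3\times C_3$ and $C_3$; in each case the odd part lies in a unique maximal odd-order subgroup, which gives uniqueness up to isomorphism.

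The main obstacle is the passage from the upper bounds of Lemma~\ref{picard} to the exact values of $\Pic(b)$: ruling out self-Morita equivalences whose source is not an endo-permutation module, and justifying the product formula over $\cO$ by controlling the vertices of bimodules over $b_1\otimes b_2$. Both are tractable here precisely because the defect groups are $(C_2)^2$ or $(C_2)^3$, so the relevant Dade groups, source-algebra automorphism groups and Morita classes of blocks are all explicitly known; the remaining ingredients — normaliser computations in $\GL_n(2)$ and the identification of odd-order subgroups of the resulting finite groups — are routine.
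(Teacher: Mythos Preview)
Your approach differs substantially from the paper's, and it contains a genuine gap.

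\textbf{The gap.} The heart of your argument for parts (3)--(8) is a ``product formula'' asserting that if $b_1$ and $b_2$ are not source algebra equivalent then $\cT(b_1\otimes b_2)\cong\cT(b_1)\times\cT(b_2)$, with analogous statements for $\cE$ and $\Pic$. You acknowledge this as ``the main obstacle'' but claim it is ``tractable'' without proving it. In fact the statement is false as written: take $b_1=\cO C_2$ and $b_2=\cO(C_2)^2$, which are certainly not source algebra equivalent. Then $\cT(b_1)=\Aut(C_2)=1$, $\cT(b_2)=\GL_2(2)\cong S_3$, but $\cT(b_1\otimes b_2)=\cT(\cO(C_2)^3)=\GL_3(2)$, which has order $168$ rather than $6$. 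The point is that automorphisms of $D_1\times D_2$ that mix the factors can perfectly well respect a fusion system that is trivial on both factors, and more generally the image of the map $\cT(b)\to\Out(D,\mathcal{F})$ in Lemma~\ref{picard} need not split according to a tensor decomposition of $b$. Since several of your target blocks in (3)--(6) have a genuinely nilpotent tensor factor ($\cO C_2$ in (5), or implicitly in the factors of (6)), you cannot simply invoke a product principle; you would have to compute $N_{\Aut(D)}(E)$ for the full defect group $D=(C_2)^5$ and the full inertial quotient $E$, which is exactly what the paper does.

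\textbf{How the paper proceeds instead.} Rather than reducing to tensor factors, the paper treats each block as a whole. Cases (1), (2), (7), (8) are quoted directly from \cite{eali18}. For (3)--(5) the paper first establishes $\Pic(b)=\cT(b)$ by invoking Livesey's theorem on blocks with normal defect groups \cite{liv19} (and, for (4), Corollary~4.5 of \cite{eali18} together with an inspection of the character table); it then applies the exact sequence of Lemma~\ref{picard} directly to $b$, computing $\Out_D(A)$ via Lemma~2.3 of \cite{eali18} and $\Out(D,\mathcal{F})\cong N_{\Aut(D)}(E)/E$ inside $\GL_5(2)$. For (6) the paper uses that any finite subgroup of the $\mathcal{F}$-stable Dade group $D_{\cO}(D,\mathcal{F})$ is a $2$-group \cite{dade}, so odd-order subgroups of $\cE(b)$ already lie in $\cT(b)$, and then bounds $\cT(b)$ as before. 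No product formula for Picard groups is used or needed.

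Your treatment of the individual factors (the computations of $\Out_D(A)$, $\Out(D,\mathcal{F})$, and the identification of $\Pic$ for $\cO A_4$, $B_0(\cO A_5)$, $\cO Q$) is essentially correct and agrees with what \cite{eali18} proves, but the assembly step does not go through as written.
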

\begin{proof}
Cases (1) and (2) follow from Lemma 5.2 and Theorem 4.6 in \cite{eali18} respectively. From the main theorem on \cite{liv19}, in cases (3) and (5) $\Pic(b)=\cT(b)$. Moreover in case (4) $\Pic(b)=\cT(b)$, using Corollary 4.5 in \cite{eali18} and direct inspection of the character table of $A_4 \times (C_2^3 \rtimes (C_7 \rtimes C_3))$. Now we use a technique from the proof of Theorem 4.7 of \cite{eali18}, which in turn uses the main theorem of \cite{bkl17}, to compute these trivial source Picard groups using the exact sequence from Proposition \ref{picard}.

We compute $\Out_D(A)$ using Lemma 2.3 in \cite{eali18} and the information in \cite{eali18} on the outer automorphism groups of source algebras. Further, by Lemma 2.1 in \cite{eali18} we have an identification $\Out(D,\mathcal{F}) \cong N_{\Aut(D)}(E)/E$. Then: \begin{itemize}
\item[(3)]We have that $\Out_D(A) = C_3 \times C_7$ and $\Out(D,\mathcal{F})=C_2 \times C_3$. Therefore, by case (e) of Theorem 4.7 in \cite{eali18} we have a semidirect product of $\Out_D(A)$ and $\Out(D,\mathcal{F})$ and we are done.
\item[(4)]We have that $\Out_D(A) = C_3 \times C_3$, and $\Out(D,\mathcal{F})=(C_2)^3$. Hence, we are done.
\item[(5)]We have that $\Out_D(A) = C_3 \times C_3$ and $\Out(D,\mathcal{F})=C_2 \wr C_2$. Hence, we are done.
\item[(6)]From \cite{dade} (see also \cite[5.2]{geteth}), any finite subgroup of $D_\cO (D, \mathcal{F}) \leq D_\cO (D)$ is a $2$-group, so by Proposition \ref{picard} any subgroup of odd order of $\cE(b)$ is contained in $\cT(b)$. We compute $\Out_D(A) \cong S_3 \times C_2$, and $\Out(D,\mathcal{F}) = (C_2)^3$, so we are done.
\end{itemize}
Cases (7) and (8) are immediate from Theorem 4.6 in \cite{eali18}.
\end{proof}

\begin{method} \label{clubsuit}
\normalfont We detail our method and the context in which it applies: let $G$ be a finite group and $B$ be a block of $\cO G$ with defect group $D \cong (C_2)^5$. Suppose that there is $N \lhd G$ with $[G:N]$ odd (so $G/N$ is solvable) and that $B$ covers a $G$-stable block $b$ of $\cO N$. Moreover, suppose that $C_G(N) \leq N$, and that $N=\ker(G \to \Out(b))$ where the map is given by $G$ acting by conjugation on $b$. Note that, since $[G:N]$ is odd, $B$ and $b$ share a defect group.

Let $f$ be a basic idempotent for $b$. From Lemma \ref{crossact}, $B$ is Morita equivalent to $fBf$, which is a crossed product of $fbf$ with $G/N$. Let $\omega: G/N \to \Out(fbf)$ be the homomorphism that corresponds to the crossed product weak equivalence class of $fBf$ as in Theorem \ref{crossedproductseqclass}, obtained as the composition of:
\begin{equation*}
G/N \xrightarrow{\quad \alpha\quad} \Out_\star(N) \xrightarrow{\quad\beta\quad} \Out(b) \xrightarrow{\quad\gamma\quad} \Pic(b)=\Pic(fbf)=\Out(fbf)
\end{equation*}
where we define the elements as follows: \begin{itemize}
\item[($\star$)] We define $\Out_\star(N)$ as the subgroup $\{ \Phi \in \Out(N) : \forall \phi \in \Phi , \; \phi(b)=b \}$, where we extend $\phi$ linearly to $\cO N$. Since the block idempotent of $b$ is central in $\cO N$, each inner automorphism of $N$ fixes $b$, so the action of any automorphism in the coset $\Phi$ does not depend on the choice of the representative. 
\item[($\alpha$)] For a coset $x \in G/N$, we fix a representative $g \in x$. Let $\tau_g \in \Aut(N)$ be defined as $n \mapsto {}^gn = gng^{-1}$, and let $\alpha(x)$ to be the coset of $\tau_g$ in $\Out(N)$. If we choose a different representative $h \in x$ then, since $h= {}^mg$ for some $m \in N$, ${}^h n = {}^{mgm^{-1}}n$, so the coset in $\Out(N)$ does not depend on the choice of $g$ and $\alpha$ is well defined. Note that, since $b$ is $G$-stable, in our situation $\alpha(G/N) \leq \Out_\star(N)$.
\item[($\beta$)] For a coset $\Phi \in \Out_\star(N)$, choose a representative $\phi \in x$, and let $\phi \in \Aut(\cO N)$ be the automorphism obtained extending $\phi$ linearly. Since $\phi(b)=b$, we define $\beta(\phi)$ to be the coset in $\Out(b)$ of the restriction of $\phi$ to $\cO N b$. Recall that inner automorphisms of $N$ induce inner automorphisms of $b$: in fact, for $n \in N$, we can consider the decomposition of $\cO N$ into blocks, and hence the element $nb \in \cO N b$. Then $\beta$ is well defined.
\item[($\gamma$)] For $\phi \in \Aut(b)$, we define the $b$-$b$-bimodule $\;_\phi b$ as follows: ${}_\phi b = b$ as sets, and $x\cdot m \cdot y=\phi(x)my$ for $x,m,y \in b$. From \cite[55.11]{cure87}, since inner automorphisms give isomorphic bimodules, the map $\gamma$ defined as $\gamma(\phi)={}_\phi b$ gives an embedding of $\Out(b)$ in $\Pic(b)$. 
\end{itemize}
Note that $\alpha$ and $\gamma$ are always injective maps, and since we assumed that $N=\ker(G \to \Out(b))$ so is $\beta$. Then we can identify $G/N$ with a subgroup of $\Pic(b)$ that has odd order.

For any $g \in G$ we have an induced action $\tau_g \in \Aut(N)$ given by conjugation, and a corresponding $\beta(\tau_g) \in \Out(b)$. Since ${}_{\tau_g}b$ is a direct summand of the permutation $\cO N$-$\cO N$-bimodule $ _{\tau_g}\cO N$, then ${}_{\tau_g} b \in \cT(b)$. Therefore, $\gamma\beta\alpha(G/N) \leq \cT(b)$. Hence, a priori we should examine only the possibilities for $\omega$ that correspond to elements in $\cT(b)$. However, in most cases we will only know $b$ up to Morita equivalence, and therefore $\cT(b)$ will not be well-determined since it is not invariant under Morita equivalences in general (and in fact not even for nilpotent blocks, as seen in \cite[7.2]{bkl17}): when given an arbitrary block $b$ Morita equivalent to a block $c$, a priori we can only say that $\cT(b) \leq \Pic(c)$. However, if the equivalence is basic then we can say that $\cT(b) \leq \cE(c)$, and a source algebra equivalence preserves $\cT(b)$.

The solvability of $G/N$ allows us to consider a chain of subnormal subgroups:
\begin{equation*} 
N = N_0 \lhd N_1 \lhd \dots \lhd N_t = G 
\end{equation*}
such that $\ell_i:=[N_{i+1}:N_i]$ is prime, and a block chain $b_i$ of $\cO N_i$ such that $b_i$ covers $b_{i-1}$, with $b_t:=B$.  Since $G/N$ is odd, they all share a defect group. We assume that each $N_i \lhd G$, as this will hold in all cases that arise in our proof.

From Lemma \ref{crossact}, $b_1$ is Morita equivalent to a crossed product of $b$ with $N_1/N$, and the weak crossed product equivalence class is specified by a pair $(\omega_1, \zeta_1)$ as in Theorem \ref{crossedproductseqclass}. As detailed in \cite[\S 3]{ea17} (using \cite{kul95} and \cite[1.2.10]{lin18}) the group $H^2(W,\mathcal{U}(Z(fbf))) = \{1\}$ whenever $W$ is cyclic, so weak equivalence classes of crossed products of $fbf$ and $G/N$ are classified by just orbits of possible $\omega_1$ whose induced $3$-cocycle in the sense of \cite{kul95} vanishes.

In the following we will actually consider each possibility for $\omega_1$ specified by $\Pic(b)$ without checking the additional requirement of the induced $3$-cocycle vanishing: the existence of examples of blocks of finite groups whose crossed product structure induces $\omega_1$ in each case will imply, post hoc, that the induced $3$-cocycle indeed vanishes.

For each possible $\omega_1$, we find an example of groups that realise it, and list all those classes. We then move to the next group extension, $N_1 \lhd N_2$. In each case that we will examine, $G/N$ is either cyclic of prime order, $C_{21}$, $C_7 \rtimes C_3$, $C_3 \times C_3$ or $C_3 \times (C_7 \rtimes C_3)$: in the last four cases, having classified crossed products of $b$ with $C_7$ or $C_3$ is not enough, and we need to proceed further. Since $G/N_1 \cong (G/N)/(N_1/N)$, we know this group up to isomorphism, but we do not know how this quotient acts on $b_1$, i.e. its embedding in $\Pic(b_1)$. This is not merely a technical issue, as the interplay between the subgroup $G/N_1$ identified with a quotient of a subgroup of odd order of $\Pic(b)$ and its embedding in $\Pic(b_1)$ is a reason that a cocycle in addition to $\omega$ is needed to identify the crossed product in Theorem \ref{crossedproductseqclass}. To examine these cases, we use two different techniques: 
\begin{itemize}
\item When $G/N = C_7 \rtimes C_3$, from \cite[5.5.i]{kar93} the group $H^2(C_7 \rtimes C_3, k^\times) = \{1\}$, so we can still use the argument in \cite[\S 3]{ea17} and consider each crossed product of $b$ with $C_7 \rtimes C_3$ directly just as we did for the cyclic case. 
\item In each situation in which $C_{21}$, $C_7 \rtimes C_3$ or $C_3 \times (C_7 \rtimes C_3)$ occur among the possibilities for $G/N$, we are able to compute the maximal odd order subgroups of $\Pic(b_1)$, and hence to list all possible embeddings. Again we find examples that realise each possibility, and then proceed until the process terminates.
\end{itemize}
\end{method}

Note that a priori if $\Pic(b)=C_\ell$ for a prime $\ell$, then there are $\ell$ different crossed product weak equivalence classes for $b_1$: however, once a group $N_1 \rhd N$ with index $\ell$ and a block $b_1$ that realises any of the nontrivial crossed products is found, it is also a realisation of all the other nontrivial crossed products. In fact, if for a fixed generator $y$ of $N_1/N$ $\omega(y)=x$, then $\langle x \rangle = \Pic(b)$. If we pick any other homomorphism $\omega_a$ with $\omega_a(y):=x^a$ then for some $b$ it holds that $\omega_a(y^b)=x$, and since $N\langle y^b \rangle \cong N\langle y \rangle$ then $N_1$ realises $\omega_a$.

In many cases, we use the groups $\PSL_3(7) \lhd \PGL_3(7)$, since the former has a block Morita equivalent to $\cO A_4$ covered by a nilpotent block of the latter. Further, we use the construction in \cite[4.4]{pu11} to produce examples of blocks covered by a block with a smaller inertial quotient, which consists in taking central extensions by a cyclic group and looking at nonprincipal blocks. For instance, the group $G=(C_2)^3 \rtimes 7^{1+2}_+$ has six nilpotent blocks, and each of them covers blocks with inertial quotient $C_7$ of any maximal normal subgroup of index $7$.
 
In order to prove our main result, we need to examine the possible Morita equivalence classes of blocks that cover specific classes of blocks. Throughout this section, we use the labelling of classes introduced in Theorem \ref{maintheorem}.

\begin{proposition} \label{a4ora5}
Let $G$ be a finite group and $B$ be a quasiprimitive block of $\cO G$ with defect group $D \cong (C_2)^5$. Suppose that there is $N \lhd G$ with $[G:N]$ odd, and that $B$ covers a block $b$ of $\cO N$. Moreover, suppose that $C_G(N) \leq N$ and that $N=\ker(G \to \Out(N))$.
\begin{enumerate}[label=(\Roman*)]\itemsep0em
\item If $b$ is Morita equivalent to (ii)$=\cO(A_4 \times (C_2)^3)$, then $B$ is Morita equivalent to one of (i), (ii), (iv), (vi), (viii), (xiii), (xvii), (xx), (xxiv), (a), (b).
\item If $b$ is Morita equivalent to (iii)$=B_0(\cO(A_5 \times (C_2)^3))$, then $B$ is Morita equivalent to one of (iii), (ix), (xiv), (xxv).
\end{enumerate}
\end{proposition}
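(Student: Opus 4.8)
The plan is to apply Method \ref{clubsuit} directly, since the hypotheses of the proposition are exactly the ones required there: $N \lhd G$ with $[G:N]$ odd, $B$ covers a $G$-stable block $b$ (quasiprimitivity forces $G$-stability), $C_G(N) \leq N$, and $N = \ker(G \to \Out(N))$, which under these conditions coincides with $\ker(G \to \Out(b))$. By Lemma \ref{pcore}, $DN/N$ is a Sylow $2$-subgroup of $G/N$, but $|G/N|$ is odd so $D \leq N$ and $B$, $b$ share the defect group $D \cong (C_2)^5$; in particular $b$ has defect group $(C_2)^5$, which constrains which of (ii), (iii) it can be. The inertial quotient of $b$ is $C_3$ (acting as $(C_3)_1$) in both cases (I) and (II), by the block library data for (ii) and (iii), so by Proposition \ref{sambalecalcs} the inertial quotient $E$ of $B$ is an odd-order subgroup of $\GL_5(2)$ containing (a conjugate of) this $C_3$ with a normal $C_3$ of the right type; consulting the subgroup diagram in the proof of Proposition \ref{sambalecalcs} pins down the finite list of possibilities for $E$.

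Next I would run the crossed-product analysis. For case (I), $b$ is Morita equivalent to $\cO(A_4 \times (C_2)^3)$, so by Proposition \ref{taugroups}(7) we have $\Pic(b) = S_3 \times ((C_2)^3 \rtimes \GL_3(2))$, whose maximal odd-order subgroups are conjugates of $C_3 \times C_7$ (coming from $C_3 \leq S_3$ and the $C_7 \leq \GL_3(2)$ normalizing the $(C_2)^3$); this is where the analysis splits according to which cyclic or metacyclic odd group $G/N$ embeds as. For case (II), $b \sim B_0(\cO(A_5 \times (C_2)^3))$ and Proposition \ref{taugroups}(8) gives $\Pic(b) = C_2 \times ((C_2)^3 \rtimes \GL_3(2))$, whose maximal odd-order subgroup is just $C_7$ — so here $G/N$ embeds into $C_7$, hence $G/N \in \{1, C_7\}$, already cutting the list to (iii), (ix)-type blocks and their further covers. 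Then, following Method \ref{clubsuit}, I would build the subnormal chain $N = N_0 \lhd N_1 \lhd \dots \lhd N_t = G$ with prime indices and, at each step, enumerate the possible $\omega_i \colon N_{i+1}/N_i \to \Pic(b_i)$ (recomputing $\Pic(b_i)$ or at least its maximal odd-order subgroups at each stage via Proposition \ref{taugroups}, Lemma \ref{picard}, and Lemma \ref{gbstarlemma}/\ref{nilpotentcovered} to control inertial quotients and source-algebra invariants), exhibiting for each possibility a concrete group whose block realizes it — e.g. using $\PSL_3(7) \lhd \PGL_3(7)$, groups built from $(C_2)^3 \rtimes 7^{1+2}_+$ and similar central-extension constructions in \cite{pu11}, and products with $A_4$, $A_5$, $(C_7 \rtimes C_3)$, $3_+^{1+2}$ to hit the blocks (ii), (iv), (vi), (viii), (xiii), (xvii), (xx), (xxiv), (a), (b) in case (I) and (ix), (xiv), (xxv) in case (II). At each realized stage I would invoke Lemma \ref{gbstarlemma} (when the prime index gives either uniqueness of the covering block or a source-algebra equivalence $B \sim b$) and Proposition \ref{index2} is not needed here since $[G:N]$ is odd, but the $(G,B)$-local system machinery underpins the identification of the base classes.

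The two genuinely delicate points I expect are: first, showing the list is complete rather than merely containing the realized examples — i.e. that every $\omega_i$ compatible with $\Pic(b_i)$ and with the vanishing of the K\"ulshammer $3$-cocycle actually gives one of the listed classes and not some new block. Method \ref{clubsuit} finesses the $3$-cocycle by a post-hoc argument (existence of a realizing example forces vanishing), so the real work is the bookkeeping over all odd-order subgroups of $\Pic(b_i)$ at each stage and matching each resulting crossed product to its basic algebra. Second, the interplay between $G/N_i$ as a quotient of a subgroup of $\Pic(b)$ and its embedding into $\Pic(b_i)$ — for the cases where $G/N$ is $C_{21}$ or $C_7 \rtimes C_3$ (which arise in case (I) through $(C_7 \rtimes C_3)_1$-inertial blocks and their covers, reaching (xiii), (xvii), (xx), (xxiv), (b)) — must be handled by computing the maximal odd-order subgroups of the intermediate $\Pic(b_1)$'s, exactly as flagged at the end of Method \ref{clubsuit}; this is where Proposition \ref{taugroups}(1)--(6) do the heavy lifting. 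The elimination of spurious candidate classes (e.g. showing a putative cover with the "wrong" $k(B), l(B)$ cannot occur) uses the numerical constraints of Proposition \ref{sambalecalcs} together with Lemma \ref{nilpotentcovered}(4), which forbids non-abelian inertial quotients for nilpotent-covered blocks; whenever $B$ itself is nilpotent covered this immediately rules out the classes (xvii)-(xxi), (b), (c) with inertial quotient involving $(C_7 \rtimes C_3)_2$, restricting to the $(C_7\rtimes C_3)_1$ classes that actually appear.
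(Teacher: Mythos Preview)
Your overall strategy---apply Method \ref{clubsuit}, compute $\Pic(b)$ via Proposition \ref{taugroups}(7)--(8), and enumerate odd-order subgroups step by step along a subnormal chain---matches the paper's approach. However, there is a concrete computational error that invalidates the case analysis in both (I) and (II).

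You claim that the maximal odd-order subgroup of $\GL_3(2)$ is $C_7$, hence that the maximal odd-order subgroup of $\Pic(b)$ is $C_3 \times C_7$ in case (I) and $C_7$ in case (II). This is wrong: $\GL_3(2)$ has order $168 = 2^3 \cdot 3 \cdot 7$, and the normaliser of a Sylow $7$-subgroup has order $21$, so $\GL_3(2)$ contains $C_7 \rtimes C_3$ as a maximal odd-order subgroup. Consequently the maximal odd-order subgroup of $\Pic(b)$ is $C_3 \times (C_7 \rtimes C_3)$ in case (I) and $C_7 \rtimes C_3$ in case (II). In case (II) your conclusion ``$G/N \in \{1, C_7\}$'' would produce only (iii) and (xiv), missing (ix) (which arises from $|G/N|=3$) and (xxv) (from $|G/N|=21$). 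In case (I) the chains can have length up to $3$ rather than $2$, and the full tree of possibilities is correspondingly larger; in particular the nonabelian quotient $C_7 \rtimes C_3$ cannot embed in your abelian $C_3 \times C_7$, yet you later refer to ``cases where $G/N$ is $C_{21}$ or $C_7 \rtimes C_3$'', which is internally inconsistent with your earlier bound.

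A secondary point: the paper does not use the inertial quotient of $B$ or the subgroup diagram from Proposition \ref{sambalecalcs} to constrain the analysis here; the entire argument runs through Picard groups and explicit realisations. The paper also organises the case (I) analysis via the homomorphism $\sigma\colon G \to S_3$ recording the permutation action on the three simple $b$-modules, choosing the chain so that $N_{t-1} = \ker(\sigma)$; this bookkeeping device lets one detect when a step exhausts the $S_3$-factor of $\Pic(b)$ and hence terminates the chain, and is what keeps the enumeration finite and tractable.
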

\begin{proof}
We apply Method \ref{clubsuit}. $\Pic(b)$ is known in both cases from Proposition \ref{taugroups}.

\begin{enumerate}[label=(\Roman*)]
\item $\Pic(b) \cong S_3 \times ((C_2)^3 \rtimes \GL_3(2))$, and $C_3 \times (C_7 \rtimes C_3)$ is a maximal subgroup of odd order. Note that from \cite{eali18} we also know the action of $\Pic(b)$ on the modules of $b$, which is invariant by Morita equivalence. Let $\sigma : G \to S_3$ be the homomorphism given by the action of $G$ permuting the three simple modules of $b$. Note that either $G=\ker(\sigma)$ or $[G:\ker(\sigma)]=3$.

Consider a chain of normal subgroups $\{N_i\}$ of length $t$ where $N_{t-1}=\ker(\sigma)$, and the corresponding block chain $\{b_i\}$. Note that $t \leq 3$ since $G/N$ is isomorphic to a subgroup of $\Pic(b)$.  

The block $b_1$ is Morita equivalent to a crossed product of the basic algebra of $b$ with $X_1=N_1 /N$: let $\omega_1$ be the homomorphism that specifies it in the sense of Theorem \ref{crossedproductseqclass}. There are four nontrivial possibilities for $(X_1, \omega_1)$, which give the following Morita equivalence classes for $b_1$: 
\begin{enumerate}[label=(\arabic*)]
\item $|X_1|=7$, and $b_1$ is Morita equivalent to (xiii), realised when $N=A_4 \times (C_2)^3$.
\item $|X_1|=3$, and $b_1$ is Morita equivalent to (viii), realised when $N=A_4 \times (C_2)^3$.
\item $|X_1|=3$, and $b_1$ is Morita equivalent to (i), realised when $N=\PSL_3(7) \times (C_2)^3$ and $G=\PGL_3(7) \times (C_2)^3$.
\item $|X_1|=3$, and $b_1$ is Morita equivalent to (a), realised when $G=(C_2^4 \rtimes 3_+^{1+2}) \times C_2$ and $N$ is a maximal subgroup of $G$ with index $3$.
\end{enumerate}
In cases $(3)$ and $(4)$ the simple modules of $b$ are not fixed by the action of $N_1/N$, so $N=\ker(\sigma)$ and hence $N_1=G$.

For the other two cases, we consider $b_2$, $N_2/N_1$ and the corresponding $\omega_2$:
\begin{enumerate}[label=(\arabic*)]
\item Note that $C_7 \lhd G/N$, so $N_1 \lhd G$. Then $G/N_1 = (G/N) / (N_1/N) \leq C_3 \times C_3$. From Proposition \ref{taugroups} the image of $G/N_1$ in $\Pic(b_1)$ is contained in $C_3 \times C_3$, so there are three possible embeddings of $C_3$ in $\Pic(b_1)$, which produce the following Morita equivalence classes for $b_2$:
\begin{itemize}
\item (xxiv), realised when $N_1=A_4 \times ((C_2)^3 \rtimes C_7)$.
\item (xvii), realised when $N_1 = \PSL_3(7)\times ((C_2)^3 \rtimes C_7)$.
\item (b), realised when $G=(C_2)^5 \rtimes (C_7 \rtimes 3^{1+2}_+)$, and $N_1$ is a maximal subgroup of $G$ with index $3$.
\end{itemize}
In the last two cases the simple modules of $b$ are not stabilised by the action of $N_2/N$, so $N_1=\ker(\sigma)$ and hence $N_2=G$ and $b_2=B$.

In the first case note that $N_2 \lhd G$ since $C_7 \rtimes C_3 \lhd G/N$. If $G \neq N_2$, clearly $G/N_2 \leq C_3$, so $G=N_3$. From Proposition \ref{taugroups}, $\Pic(b_2)$ contains a unique maximal subgroup isomorphic to $C_3 \times C_3$, so there are again three possible embeddings of $C_3$ in $\Pic(b_2)$, which determine the following Morita equivalence classes for $B$:
\begin{itemize}
	\item (xiii), realised when $G=(C_2)^3 \rtimes (C_7 \rtimes 3^{1+2}_+)$ and $N_2$ is a maximal subgroup of $G$ with index $3$.
	\item (xvii), realised when $N_2 = \PSL_3(7)\times ((C_2)^3 \rtimes (C_7 \rtimes C_3))$.
	\item (xx), realised when  $G=(((C_2)^3 \rtimes C_7)  \times \PSL_3(7))\rtimes 3^{1+2}_+$.
\end{itemize}
Note that, actually, the first one cannot occur as then $G/N$ stabilises the simple modules of $b$, but if $G=\ker(\sigma)$ then $t \leq 2$. We have exhausted all the possibilities.

\item From Proposition \ref{taugroups} the unique maximal subgroup of odd order of $\Pic(b_1)$ is isomorphic to $C_3 \times C_3$, so we can assume that $N_2/N_1 = C_3$, and there are three possible embeddings of $C_3$ in $\Pic(b_1)$, which determine the following Morita equivalence classes for $B$:
\begin{itemize}
\item For two distinct embeddings, $B$ is Morita equivalent to (ii), realised when $N_1 = \PSL_3(7) \times C_2 \times A_4$. If $G=N_2$, we are done. Otherwise $G/N_2=C_3$ and the possibilities for $B$ are the same as the ones determined in cases (2)-(4).
\item \sloppy $B$ is Morita equivalent to (iv), realised as a crossed product\footnote{The group $PSL_3(7)^2 \times C_2$ does not have a normal subgroup $N$ with a block Morita equivalent to $b$, so this group cannot actually appear in our chain. However, we are looking at all possible crossed products between $b_1$ and $C_3$, and this group provides an example of one class: it possibly not occurring does not hinder our classification purpose. Whenever this happens, we say that the class is \textit{realised as a crossed product}.} when $N_1=\PSL_3(7)^2 \times C_2$. Moreover, $G=N_2$ because the simple modules of $b$ are not stabilised by the action of $N_2/N$.
\end{itemize}
\end{enumerate}
\item From Proposition \ref{taugroups}, $\Pic(b) = C_2 \times ((C_2)^3 \rtimes \GL_3(2))$, which contains $C_7 \rtimes C_3$ as a maximal subgroup of odd order. We consider $B$ as a crossed product of $b$ with $G/N$. For each isomorphism type of $G/N$ there is a unique embedding in $\Pic(b)$, and we have the following possibilities, for the Morita equivalence class of $B$:
\begin{itemize}
\item If $[G:N]=7$, then $B$ is Morita equivalent to (xiv), realised when $N=A_5 \times (C_2)^3$.
\item If $[G:N]=3$, then $B$ is Morita equivalent to (ix), realised when $N=A_5 \times (C_2)^3$.
\item If $[G:N]=21$ then $B$ is Morita equivalent to (xxv), realised when $N= A_5 \times (C_2)^3$.
\end{itemize} \qedhere
\end{enumerate} 
\end{proof}

In our proof, we need to look at blocks covering a block of a central product of two quasisimple groups whose Picard group is, at the moment, unknown. In these situations we use the group structure to reduce to a known subgroup of the Picard group, but in the following specific case we can prove a stronger result using Clifford theory. In this situation $G/N$ is a subgroup of odd order of the outer automorphism group of the central product of up to two quasisimple groups, in which case the supersolvability hypothesis is a consequence of the classification of finite simple groups (see \cite{atlas}).
\begin{proposition} \label{a5a5c2}
Let $G$ be a finite group and $B$ be a quasiprimitive block of $\cO G$ with defect group $D \cong (C_2)^5$. Suppose that there is $N \lhd G$ with $[G:N]$ odd, that $G/N$ is supersolvable, and that $B$ covers a $G$-stable block $b$ of $\cO N$. Suppose that $C_G(N) \leq N$ and $N=\ker(G\to \Out(b))$. If $b$ is Morita equivalent to (x)$=B_0(\cO(A_5 \times A_5 \times C_2))$, then $B$ is source algebra equivalent to $b$.
\end{proposition}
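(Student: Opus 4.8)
The plan is to combine the crossed-product machinery of Method~\ref{clubsuit} with the Picard-group estimate of Proposition~\ref{picard} to force $G/N$ to be a $3$-group of order at most $9$, and then to use the tensor-product shape of $b$ together with Clifford theory to conclude that $G=N$ (so that $B=b$ and the two blocks are trivially source algebra equivalent).

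First I would record the reductions. As $[G:N]$ is odd it is coprime to $p=2$, so $B$ and $b$ have a common defect group $D\cong(C_2)^5$; since $b$ is Morita equivalent to $B_0(\cO(A_5\times A_5\times C_2))$ we have $k(b)=32$ and $l(b)=9$, whence by Proposition~\ref{sambalecalcs} the inertial quotient $E$ of $b$ is $C_3\times C_3$ and $\cF=\cF_D(D\rtimes E)$ is the fusion system already analysed for $A_4\times A_4\times C_2$ in the proof of Proposition~\ref{taugroups}(5). The hypotheses of Method~\ref{clubsuit} hold (these are exactly the assumptions on $N$, plus the solvability of $G/N$), so there is an embedding $G/N\hookrightarrow\cT(b)$. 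By Proposition~\ref{picard} applied to $b$ there is an exact sequence $1\to\Out_D(A)\to\cT(b)\to\Out(D,\cF)$ with $\Out_D(A)$ isomorphic to a subgroup of $\Hom(E,k^\times)\cong C_3\times C_3$ (primitive cube roots of unity lie in $k$) and $\Out(D,\cF)\cong N_{\GL_5(2)}(E)/E\cong C_2\wr C_2$ by the computation already carried out in Proposition~\ref{taugroups}(5). As $G/N$ has odd order and $C_2\wr C_2$ has no nontrivial element of odd order, $G/N$ maps into $\Out_D(A)$; hence $G/N$ is an elementary abelian $3$-group of rank at most $2$ and $[G:N]$ is a power of $3$. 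Consequently the inertial quotient of $B$ is a $3$-group containing $E\cong C_3\times C_3$, so it equals $C_3\times C_3$ by Proposition~\ref{sambalecalcs}, and $(k(B),l(B))\in\{(32,9),(16,1)\}$.

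It remains to rule out $G/N\cong C_3$ and $G/N\cong C_3\times C_3$, and this is where Clifford theory is used. The Morita equivalence class of $b$ decomposes as $B_0(\cO A_5)\otimes_\cO B_0(\cO A_5)\otimes_\cO\cO C_2$; since $G/N$ has odd order it cannot interchange the two $A_5$-type factors, so $G$ acts on each separately, and on a factor $c\simeq B_0(\cO A_5)$ the induced self-equivalence lies in $\cT(c)\le\Pic(c)=\Pic(B_0(\cO A_5))=C_2$ (Proposition~\ref{taugroups}(8) with $Q=1$) and, being of odd order, is trivial; likewise the $\cO C_2$ factor contributes nothing. Hence $G/N$ fixes every element of $\IBr(b)$ and of $\Irr(b)$, and Clifford theory for the coprime-index extension $N\lhd G$ gives $l(B)=\sum_{\phi\in\IBr(b)}l(k_{\alpha_\phi}[G/N])$, where $\alpha_\phi\in H^2(G/N,k^\times)$ is the obstruction to extending $\phi$. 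For $G/N\cong C_3$ we have $H^2(C_3,k^\times)=1$, so each summand is $l(kC_3)=3$ and $l(B)=27$, contradicting $l(B)\in\{1,9\}$. For $G/N\cong C_3\times C_3$ one combines the same argument with the corresponding identity for $k(B)$ and the admissible values above; the clean way to finish is to note that the computation of $\Out_D$ under the tensor decomposition (Lemma~2.3 of~\cite{eali18}) forces $\Out_D(A)=1$, because for each $A_5$-factor $c$ the group $\Out_D(A_c)$ injects both into $\cT(c)\le C_2$ and into $\Hom(C_3,k^\times)\cong C_3$, hence has order dividing $\gcd(2,3)=1$. Either way $G/N=1$, so $G=N$, $B=b$, and $B$ is source algebra equivalent to $b$.

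The main obstacle is the final step: $b$ is only given up to Morita equivalence, and neither $\cT(b)$ nor $\Out_D(A)$ is a Morita invariant, so to transport the tensor decomposition and the computation of $\Out_D$ one must first know that the given equivalence can be taken to be basic (or a source algebra equivalence), or else run the whole argument using only the Morita-invariant $\Pic(b)=\Pic(B_0(\cO(A_5^2\times C_2)))$. This is precisely where a pure Picard-group computation is insufficient---the Picard group of $B_0(\cO(A_5\times A_5\times C_2))$ being unknown---and Clifford theory is brought in to control the action of $G/N$ on the characters of $b$.
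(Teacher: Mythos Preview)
Your reduction of $G/N$ to an elementary abelian $3$-group of rank at most $2$ via Proposition~\ref{picard} is valid: the exact sequence for $\cT(b)$ and the bound $\Out_D(A)\hookrightarrow\Hom(E,k^\times)\cong C_3\times C_3$ apply to the actual block $b$, since both $\cF$ and $E$ are determined by the Morita-invariant data $k(b)=32$, $l(b)=9$ together with Proposition~\ref{sambalecalcs}.

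The genuine gap, which you yourself flag, is in the second half. Your claim that $G/N$ fixes every simple $b$-module rests on the tensor factorisation $B_0(\cO A_5)\otimes B_0(\cO A_5)\otimes\cO C_2$ and on $\Pic$ of each factor, but $b$ is an arbitrary member of this Morita class and need not decompose as a tensor product: there are no ``$A_5$-type factors'' of $b$ for $G$ to permute or to act on separately. Your alternative route (forcing $\Out_D(A)=1$ via Lemma~2.3 of \cite{eali18}) has the same defect, since that lemma again presupposes a tensor decomposition of the source algebra. So neither branch actually establishes that the simples are fixed, and the $l(B)=27$ contradiction is not reached.

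The paper bypasses this completely and never attempts to show $G=N$. It walks up the supersolvable chain one prime-index step at a time and at each step invokes Lemma~\ref{gbstarlemma}, which gives a dichotomy: either $b_{i+1}$ is source algebra equivalent to $b_i$, or $b_{i+1}$ is the \emph{unique} block covering $b_i$. In the latter case the paper uses only Morita-invariant data---the numbers of irreducible constituents of the projective indecomposables of $b$, which are $32$ (once), $16$ (four times), $8$ (four times)---to constrain how a cyclic group of prime order $\ell$ can permute $\IBr(b)$. For $\ell\ge 5$ these multiplicities force every simple to be fixed, whence $l(b_1)=9\ell>32$; for $\ell=3$ the unique simple with multiplicity $32$ is always fixed, and the remaining orbit patterns give $l(b_1)\in\{27,19,11\}$, all excluded by Proposition~\ref{sambalecalcs}. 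Thus the first branch of the dichotomy holds at every step, and composing the source algebra equivalences yields the conclusion. The decomposition-matrix count is precisely the Morita-invariant substitute for your tensor-factor argument.
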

\begin{proof}
Since $G/N$ is supersolvable, we can consider a chain of normal subgroups $N \lhd N_1 \lhd \dots \lhd N_t \lhd G$, with prime indices, and a corresponding block chain $b, b_1, \dots, b_t, B$ where each block covers the ones below it. Note that they all share a defect group, and that each $N_i \lhd G$. Consider the action of $G/N$ on $b$ by conjugation. 

Let $[N_1:N]=\ell$, an odd prime. From Lemma \ref{gbstarlemma}, either $b_1$ is source algebra equivalent to $b$ or $b_1$ is the unique block covering $b$. Suppose the latter: $l(b)=9$, and from the decomposition matrix of $b$ we know that, if we consider the character of each projective cover of the simple modules, there are: one with $32$ irreducible constituents, four with $16$ and four with $8$. Any automorphism of the block preserves the number of irreducible constituents: hence, if $b_1$ is the unique block covering $b$ and $\ell \geq 5$ then $N_1/N$ fixes every simple module, which implies that $l(b_1)=9\ell$. This is a contradiction to Proposition \ref{sambalecalcs} since $l(c) \leq k(c) \leq 32$ for any block $c$ with defect group $D$. If $\ell=3$, since $e(b)=9$, from Lemma 4.11 in \cite{mck19} $e(b_1)=1,3, 9$ or $27$. First, note that $e(b_1)=27$ is a contradiction to Proposition \ref{sambalecalcs}. Now either every simple $b$-module is fixed, so $l(b_1)=27$ (again a contradiction), or there is one orbit of length $3$ and six fixed characters, which gives $l(b_1)=1+3\cdot 6 = 19$ (again a contradiction), or there are two orbits of length $3$ and $3$ fixed characters, which gives $l(b_1)=1+1+3\cdot 3 = 11$ (again a contradiction). Since we know that there is at least one fixed simple $b$-module, there cannot be three orbits of length $3$. 

Therefore, $b_1$ is source algebra equivalent to $b$. Since $B$ is quasiprimitive and $N_1 \lhd G$, $b_1$ is $G$-stable. We can now repeat the argument for any other intermediate block $b_i$ (replacing $b_1$ with $b_{i+1}$, and $b$ with $b_i$) and compose the equivalences to obtain that $B$ is source algebra equivalent to $b$.
\end{proof}

\begin{proposition} \label{twocomponents}
Let $G$ be a finite group and $B$ be a quasiprimitive block of $\cO G$ with defect group $D= (C_2)^5$. Suppose that there are $H_1, H_2 \lhd G$, $H=H_1 \times H_2$ with $H \lhd G$ and $[G:H]$ odd, and that $B$ covers $G$-stable blocks $c_i$ of $\cO H_i$ (so $B$ also covers the block $c\cong c_1 \otimes c_2$ of $\cO H$). If $C_G(H) \leq HZ(G)$ and $HZ(G)=\ker(G\to \Out(c))$, then:
\begin{enumerate}[label=(\Roman*)]
\item If $c_1$ is Morita equivalent to $\cO(A_4 \times Q_1)$, and $c_2$ is Morita equivalent to $\cO(A_4 \times Q_2)$ where $Q_1, Q_2 \in \{ 1, C_2 \}$, then $B$ is Morita equivalent to one of (i), (ii), (iv), (viii), (a).
\item If $c_1$ is Morita equivalent to $\cO(A_4 \times Q_1)$ and $c_2$ is Morita equivalent to $B_0(\cO(A_5 \times Q_2))$ where $Q_1, Q_2 \in \{ 1, C_2 \}$, then $B$ is Morita equivalent to one of (iii), (ix).
\item If each $c_i$ is Morita equivalent to $B_0(\cO(A_5 \times Q_i))$ where $Q_1, Q_2 \in \{ 1, C_2 \}$, then $B$ can only be Morita equivalent to $c$, i.e. (x).
\item If $c_1$ is Morita equivalent to $\cO((C_2)^3 \rtimes C_7)$ and $c_2$ is Morita equivalent to $\cO(A_4)$, then $B$ is Morita equivalent to one of (i), (ii), (iv), (vi), (viii), (xiii), (xvii), (xx), (xxiv), (a), (b).
\item If $c_1$ is Morita equivalent to $\cO((C_2)^3 \rtimes C_7)$ and $c_2$ is Morita equivalent to $B_0(\cO(A_5))$, then $B$ is Morita equivalent to one of (iii), (ix), (xiv), (xxv).
\item If $c_1$ is Morita equivalent to $\cO((C_2)^3 \rtimes (C_7 \rtimes C_3))$ and $c_2$ is Morita equivalent to $\cO(A_4)$, then $B$ is Morita equivalent to one of (xiii), (xvii), (xxiv), (b). 
\item If $c_1$ is Morita equivalent to $\cO((C_2)^3 \rtimes (C_7 \rtimes C_3))$ and $c_2$ is Morita equivalent to $B_0(\cO(A_5))$, then $B$ is Morita equivalent to one of (xiv), (xxv). \qedhere
\end{enumerate}
\end{proposition}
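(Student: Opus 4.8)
The plan, applied uniformly to all seven cases, is to first identify $c$ up to Morita equivalence and then to feed it into Method \ref{clubsuit} (or, for the $A_5$-heavy case (III), into Proposition \ref{a5a5c2}). By Lemma \ref{centralproducts}(2), $c \cong c_1 \otimes c_2$, and since $[G:H]$ is odd a defect group of $c$ is also a defect group of $B$, so $D = D_1 \times D_2$ with $D_i$ a defect group of $c_i$. The Morita class of $c_i$ determines $D_i$: $\cO(A_4 \times Q)$ and $B_0(\cO(A_5 \times Q))$ have defect group $(C_2)^2 \times Q$, while $\cO((C_2)^3 \rtimes C_7)$ and $\cO((C_2)^3 \rtimes (C_7 \rtimes C_3))$ have defect group $(C_2)^3$. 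The requirement $|D| = 32$ therefore forces, in (I)--(III), exactly one of $Q_1, Q_2$ to be $C_2$ and the other trivial, and in (IV)--(VII) both to be trivial; hence $c$ is Morita equivalent to (viii), (ix), (x), (xiii), (xiv), (xvii), (xxv) in the seven cases respectively.

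Next I would put $N := HZ(G)$ and let $b$ be the unique block of $\cO N$ covered by the quasiprimitive block $B$ (uniqueness by quasiprimitivity, as $N \lhd G$); then $b$ lies over $c$. Since $N/H$ is a homomorphic image of the central $p'$-group $Z(G)H/H$, it acts trivially on $c$ by conjugation, so $b$ is Morita equivalent to $c$; moreover $G/N$ is solvable of odd order, and the hypotheses $C_G(H) \leq HZ(G)$, $HZ(G) = \ker(G \to \Out(c))$ become $C_G(N) \leq N$, $N = \ker(G \to \Out(b))$. Thus Method \ref{clubsuit} applies: $B$ is Morita equivalent to a crossed product of a basic algebra of $b$ with the odd-order solvable group $G/N$, and the image of $G/N$ in $\Pic(b)$ lies in $\cT(b)$.

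The remaining work is the case-by-case chain analysis, carried out exactly as in the proof of Proposition \ref{a4ora5}. For (III), $b \sim$ (x) is the principal block of a group built from the central product of two copies of $A_5$ by a $2$-group, so $G/N$ embeds into the outer automorphism group of such a central product and is supersolvable by the classification of finite simple groups (\cite{atlas}); Proposition \ref{a5a5c2} then yields that $B$ is source algebra equivalent to $b$, hence Morita equivalent to (x). In the other cases one fixes a chain $N = N_0 \lhd \cdots \lhd N_t = G$ of $G$-normal subgroups with prime indices and a block chain $b = b_0, \dots, b_t = B$; each $b_{i+1}$ is a crossed product of a basic algebra of $b_i$ with $N_{i+1}/N_i \cong C_3$, $C_7$ or $C_7 \rtimes C_3$, classified by an embedding into a maximal odd-order subgroup of $\Pic(b_i)$, which is supplied by Proposition \ref{taugroups}: $C_3 \times C_3$ for (viii) (part (5)), $C_3$ for (ix) (part (6)), $C_3 \times (C_7 \rtimes C_3)$ for (xiii) (part (3)), $C_3 \times C_3$ for (xvii) (part (4)). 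For (xiv) and (xxv) one first argues, using that $\Pic(B_0(\cO A_5))$ has trivial odd part (Proposition \ref{taugroups}(8)) together with the rigidity argument of Proposition \ref{a5a5c2}, that $G$ acts as inner automorphisms on the $A_5$-block $c_2$, which reduces the problem to the $\cO((C_2)^3 \rtimes C_7)$- resp. $\cO((C_2)^3 \rtimes (C_7 \rtimes C_3))$-analysis, where $\Pic = \cT$ is $C_7 \rtimes C_3$ resp. $C_3$ (Proposition \ref{taugroups}(1),(2)). For every crossed-product weak-equivalence class allowed by this data one then exhibits an explicit finite group realising it --- from the running list of model groups, the central-extension/non-principal-block construction of \cite[4.4]{pu11}, and groups such as $\PGL_3(7)$, $(C_2)^n \rtimes (C_7 \rtimes 3^{1+2}_{\pm})$ and $\Aut(\SL_2(8))$ --- which also verifies post hoc that the $3$-cocycle of \cite{kul95} vanishes, as allowed in Method \ref{clubsuit}. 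Collecting the classes of Theorem \ref{maintheorem} that arise gives the stated lists.

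The main obstacle is the $A_5$-involving cases (II), (III), (V), (VII): there the full Picard group of the model block is unknown, so the possibilities for $G/N$ cannot simply be read off from Proposition \ref{taugroups}, and one must combine the fact that $G/N$ consists of \emph{trivial-source} autoequivalences with the Clifford-theoretic rigidity of Proposition \ref{a5a5c2} --- comparing orbits of simple modules under $G/N$ against the known decomposition matrices and the possibilities of Proposition \ref{sambalecalcs} --- to force the $A_5$-part of the crossed product to be trivial. The second, purely bookkeeping, difficulty is the realisation step combined with tracking how the embedding $G/N_i \hookrightarrow \Pic(b_i)$ at one node is compatible with $G/N_{i+1} \hookrightarrow \Pic(b_{i+1})$ at the next (the mismatch recorded by the extra $2$-cocycle of Theorem \ref{crossedproductseqclass}); the "diagonal" crossed products, which account for the non-principal classes (a), (b), (c), are the subtlest to realise, and are treated exactly as in the proof of Proposition \ref{a4ora5}.
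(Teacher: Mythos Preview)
Your overall strategy (identify $c$, apply Method \ref{clubsuit}, run the chain analysis) matches the paper's, but you miss the one structural observation that makes all seven cases uniform and that the paper states at the very start of its proof: because $H_1, H_2 \lhd G$, conjugation by $G$ respects the tensor decomposition $c \cong c_1 \otimes c_2$, so the image of $G/(HZ(G))$ in $\Pic(c)$ already lies in $\cT(c_1) \times \cT(c_2) \leq \Pic(c_1) \times \Pic(c_2)$. The paper then takes $T$ to be the maximal odd-order subgroup of $\Pic(c_1)\times\Pic(c_2)$, and each factor is known from Proposition \ref{taugroups}(1),(2),(7),(8). With this, the $A_5$-cases (II), (V), (VII) are immediate: $\Pic(B_0(\cO A_5))=C_2$ and $\Pic(B_0(\cO(A_5\times C_2)))=C_2\times C_2$ have trivial odd part, so $T$ collapses to the odd part of $\Pic(c_1)$ alone (respectively $C_3$, $C_7\rtimes C_3$, $C_3$), and no Clifford-theoretic rigidity is needed at all.

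Without that observation your argument for case (II) has a genuine gap. You invoke Proposition \ref{taugroups}(6), but that bounds only $\cE$ of the model block (ix); you know $b$ merely up to \emph{Morita} equivalence, not up to basic Morita equivalence, so you cannot conclude $\cT(b)\leq \cE(\text{(ix)})$ --- exactly the caveat spelled out in Method \ref{clubsuit}. Your proposed fallback, porting the decomposition-matrix counting of Proposition \ref{a5a5c2}, does not transplant as written: that argument is tuned to the multiplicities in $B_0(\cO(A_5\times A_5\times C_2))$ and to the inertial quotient $C_3\times C_3$, and would need a fresh computation for (ix), (xiv), (xxv). The paper sidesteps all of this by the product-of-Picard-groups reduction.

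Two minor points: in case (VI) the block $c$ is Morita equivalent to (xxiv), not (xvii); and your passage from $c$ (a block of $H$) to a block $b$ of $N=HZ(G)$ is handled in the paper simply by working with the map $G/N\to\Pic(c_1)\times\Pic(c_2)$ directly rather than introducing $b$ at all.
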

\begin{proof}
\sloppy We use Method \ref{clubsuit}, with the following improvement: since the action of $G$ by conjugation stabilises both $H_1$ and $H_2$, the image of $G/(HZ(G))$ through $\gamma\beta\alpha$ is always contained in the subgroup $\cT(c_1) \times \cT(c_2)$ of $\cT(c)$, which is then contained in $\Pic(c_1) \times \Pic(c_2) \leq \Pic(c)$. In each case, we denote the unique maximal subgroup of odd order of this image in $\Pic(c)$ as $T$, which controls all the possibilities that can occur for $G/(HZ(G))$. Let $N=HZ(G)$. In each case we pick a chain of normal subgroups $N \lhd N_1 \lhd \dots \lhd G$ and consider the corresponding block chain.
\begin{enumerate}[label=(\Roman*)]
\item In this case $\Pic(c_i)= S_3 \times Q_i$, so $T=C_3 \times C_3$. Let $\sigma : G \to S_3$ be the homomorphism given by the action of $G$ permuting the three simple modules of $c_1$. Note that either $G = \ker(\sigma)$ or $[G:\ker(\sigma)]=3$. Consider a chain of normal subgroups $\{N_i\}$ of length $t$ such that if $G \neq \ker(\sigma)$ then $N_{t-1}=\ker(\sigma)$, and the corresponding block chain $\{b_i\}$. Note that $t \leq 2$. Then since $N_1/N = C_3$ we have the following possibilities for the Morita equivalence class of $b_1$: 
\begin{enumerate}[label=(\arabic*)]
\item (ii), realised when $H=\PSL_3(7)^2 \times C_2$, and $N_1 \leq \ker(\sigma)$.
\item (ii), realised when $H=\PSL_3(7)^2 \times C_2$, and $N_1 \not \leq \ker(\sigma)$.
\item (iv), realised when   $N=\PSL_3(7)^2 \times C_2$ and $G=(\PSL_3(7)^2 \rtimes C_3) \times C_2$.
\end{enumerate}
In cases (2) and (3) the simple modules of $c_1$ are permuted transitively, so $G=N_1$ and $B=b_1$. In case (1) $N_1 \lhd G$, we have that $G/N_1 = (G/N)/(N_1/N) \cong C_3$, and from Proposition \ref{picard} there are three possible embeddings of $C_3$ in a subgroup of $\Pic(b_1)=S_3 \times \GL_3(2)$, which give the same possibilities as in cases (I.2,3,4) in Proposition \ref{a4ora5}.

\item In this case $\Pic(c_1)= S_3 \times Q_1$ and $\Pic(c_2)= C_2 \times Q_2$, so $T=C_3$. Then there is a unique possibility for the Morita equivalence class of $b_1 = B$: (iii), realised when $N=A_5 \times \PSL_3(7)$. 
\item This case is implied by the stronger result in Proposition \ref{a5a5c2}. Note that our technique also works in this special situation, since $\Pic(c_i) = C_2 \times Q_i$, a $2$-group, hence $G=H$.

\item In this case $\Pic(c_1)=C_7 \rtimes C_3$ and $\Pic(c_2)=S_3$. Hence $T=(C_7 \rtimes C_3) \times C_3$. Let $\sigma : G \to S_3$ be the homomorphism given by the action of $G$ permuting the three simple modules of $c_2$, and note that if $G \neq \ker(\sigma)$ then $[G:\ker(\sigma)]=3$. Consider a chain of normal subgroups $\{N_i\}$ of length $t$ where, if $G \neq \ker(\sigma)$, $N_{t-1}=\ker(\sigma)$, and the corresponding block chain. Note that $t \leq 3$. We have the following possibilities for $X_1 = N_1/N$ and the Morita equivalence class of $b_1$: 
	\begin{enumerate}[label=(\arabic*)]
	\item $|X_1|=7$, and $b_1$ is Morita equivalent to (ii), realised when $N_1=((C_2)^3 \rtimes 7^{1+2}_+) \times A_4$ and $N$ is a maximal subgroup of $G$ with index $7$.
	\item $|X_1|=3$, and $b_1$ is Morita equivalent to (xxiv), realised when $N=((C_2)^3 \rtimes C_7) \times A_4$.
	\item $|X_1|=3$, and $b_1$ is Morita equivalent to (xiii), realised when $N=((C_2)^3 \rtimes C_7) \times \PSL_3(7)$.
	\item $|X_1|=3$, and $b_1$ is Morita equivalent to (xx), realised when $N=((C_2)^3 \rtimes C_7) \times \PSL_3(7)$.
	\end{enumerate}
Note that in cases (3) and (4) the simple modules of $b_2$ are permuted by $N_1/N$, so $G=N_1$ and $B=b_1$. For the other two cases, we consider $b_2$ and $X_2=N_2/N_1$:
\begin{enumerate}[label=(\alph*)]
\item In this situation since $N_1 \lhd G$ we have that $G/N_1 \leq C_3 \times C_3$, and also $\Pic(b_1)=S_3 \times \GL_3(2)$. In particular the cases that can occur here are a subset of the cases already examined in Proposition \ref{a4ora5}, so we are done.
\item By inspection of all possible chains of normal subgroups $1 \lhd \dots \lhd G/N$ with prime indices, in this case $G/N \leq C_3 \times C_3$, and hence $G/N_1 \leq C_3$. From Proposition \ref{picard} there are three possible embeddings of $G/N_1$ in $\Pic(b_1)$, as the image of $G/N_1$ is contained in the unique maximal subgroup of odd order $C_3 \times C_3 \leq \Pic(b_1)$. So we have the following possibilities for the Morita equivalence class of $B$: 
\begin{itemize}
\item (xvii), realised when $N_1 =  ((C_2)^3 \rtimes (C_7 \rtimes C_3)) \times \PSL(3,7)$.
\item (xx), realised when $N_2=(((C_2)^3 \rtimes C_7)  \times \PSL_3(7))\rtimes 3^{1+2}_+$ and $N_1$ is a maximal subgroup of $N_2$ with index $3$.
\item (xiii), realised when $N_2=(C_2)^3 \rtimes (C_7 \rtimes 3^{1+2}_+) \times A_4$ and $N_1$ is a maximal subgroup of $N_2$ with index $3$.
\end{itemize}
Note that actually the last case cannot occur, as it implies $G= \ker(\sigma)$, but then the chain cannot have length two with $|G/N|=9$.
\end{enumerate}

\item In this case $\Pic(c_1)=C_7 \rtimes C_3$ and $\Pic(c_2)=C_2$, so $T=C_7 \rtimes C_3$. We have the following possibilities for $G/N$ and the Morita equivalence class of $B$.
\begin{enumerate}[label=(\arabic*)]
	\item \sloppy $|X_1|=7$, and $B$ is Morita equivalent to (iii), realised when $N_1=((C_2)^3 \rtimes 7^{1+2}_+) \times A_5$ and $N$ is a maximal subgroup of $N_1 \text{ with index $7$.}$
	\item $|X_1|=3$, and $B$ is Morita equivalent to (xxv), realised when $N=((C_2)^3 \rtimes C_7) \times A_5$.
	\item $|X_1|=21$, and $B$ is Morita equivalent to (ix). We are unable to realise this crossed product with a direct example of a chain of three groups, but if we suppose that $N_1/N=C_7$ then $b_1$ is as in case (1) above. Since from Proposition \ref{taugroups} $\Pic(b_1)$ admits a unique embedding of $C_3$ then (ix) is the unique possibility for the Morita equivalence class of $B$.
\end{enumerate}
\item In this case $\Pic(c_1) = C_3$ and $\Pic(c_2)=S_3$, so $T=C_3 \times C_3$. As before, let $\sigma : G \to S_3$ be the homomorphism given by the action of $G$ permuting the three simple modules of $c_2$, and note that either $G = \ker(\sigma)$ or $[G:\ker(\sigma)]=3$. Consider a chain of normal subgroups $\{N_i\}$ of length $t$ where, when $G \neq \ker(\sigma)$, $N_{t-1}=\ker(\sigma)$, and consider the corresponding block chain. Note that $t \leq 3$. We have the following possibilities for $X_1 = N_1/N = C_3$ and the Morita equivalence class of $b_1$: 
	\begin{enumerate}[label=(\arabic*)]
	\item (xiii), realised when $N_1=(C_2)^3 \rtimes (C_7 \rtimes 3^{1+2}_+) \times A_4$ and $N$ is a maximal subgroup of $N_1$ with index $3$.
	\item (xvii), realised when $N=((C_2)^3 \rtimes (C_7 \rtimes C_3)) \times \PSL_3(7)$.
	\item (xx), realised when $N_1= ((C_2)^3 \rtimes C_7) \times \PSL_3(7) \rtimes 3^{1+2}_+$ and $N$ is a maximal subgroup of $N_1$ with index $3$
	\end{enumerate}
In cases (2) and (3) the simple modules of $c_2$ are permuted by the action of $X_1$, so $G=N_1$ and $B=b_1$. In case (1) $N_1 \lhd G$ and clearly $G/N_1 \cong C_3$. From Proposition \ref{picard}, $\Pic(b_1)$ contains a unique subgroup isomorphic to $C_3 \times C_3$, and hence there are three possible embeddings of $G/N_1$ in $\Pic(b_1)$, which give the following possibilities for the Morita equivalence class of $B$:
\begin{itemize}
\item (vi), realised when $N_1 = ((C_2)^3 \times C_7) \times \PSL_3(7)$.
\item (b), realised as a crossed product when $N_1=((C_2)^3 \rtimes C_7)\times \PSL_3(7)$.
\item (xxiv), realised as a crossed product when $N_1=((C_2)^3 \times C_7) \times A_4$.
\end{itemize}
Note that the last case cannot occur, as it implies $G= \ker(\sigma)$, in which case the chain cannot have length two with $|G/N|=9$.

\item In this case $\Pic(c_1)=C_3$ and $\Pic(c_2)=C_2$, so $G/N \leq C_3$. Then $G=N_1$, and $b_1 = B$ is Morita equivalent to (xiv), realised when $G=(C_2)^3 \rtimes (C_7 \rtimes 3^{1+2}_+) \times A_5$ and $N$ is a maximal subgroup of $G$ with index $3$. \qedhere
\end{enumerate}
\end{proof}

The next lemma deals with situations in which the initial block is again a block of the direct product of two normal subgroups, but now one of the groups is fixed up to isomorphism.

\begin{proposition}\label{mixed}
Let $G$ be a finite group and $B$ be a quasiprimitive block of $\cO G$ with defect group $D= (C_2)^5$. Suppose that there are $H_1, H_2 \lhd G$, $H=H_1 \times H_2$ with $H \lhd G$ and $[G:H]$ odd, and suppose that $B$ covers $G$-stable blocks $c_i$ of $\cO H_i$, so $B$ also covers the block $c\cong c_1 \otimes c_2$ of $\cO H$. Suppose that $C_G(H) \leq HZ(G)$ and $HZ(G)=\ker(G\to \Out(c))$. \\ Suppose that $H_1$ is isomorphic to $\SL_2(8)$ or ${}^2G_2(3^{2m+1})$ for some $m \in \mathbb{N}$, and $c_1$ is the principal block, or that $H_1$ is isomorphic to $\operatorname{Co}_3$ and $c_1$ is the unique nonprincipal block with defect group $(C_2)^3$. Then either $B$ is Morita equivalent to $c$ or one of the following occurs:
\begin{enumerate}[label=(\Roman*)]
\item If $H_1\cong\SL_2(8)$ and $c_2$ is nilpotent then $B$ is Morita equivalent to one of (vii), (xv), (xix), (xxi), (xxviii), (c).
\item If $H_1\cong\SL_2(8)$ and $c_2$ is Morita equivalent to $\cO A_4$, then $B$ is Morita equivalent to one of (vii), (xv), (xix), (xxi), (xxviii), (c).
\item If $H_1\cong\SL_2(8)$ and $c_2$ is Morita equivalent to $B_0(\cO A_5)$, then $B$ is Morita equivalent to one of (xvi), (xxix).
\item If $H_1 \cong {}^2G_2(3^{2m+1})$ or $H_1 \cong \operatorname{Co}_3$ and $c_2$ is nilpotent, then $B$ is Morita equivalent to one of (xix), (xxviii).
\item If $H_1 \cong {}^2G_2(3^{2m+1})$ or $H_1 \cong \operatorname{Co}_3$, and $c_2$ is Morita equivalent to $\cO A_4$, then $B$ is Morita equivalent to one of (xix), (xxviii).
\item If $H_1 \cong {}^2G_2(3^{2m+1})$ or $H_1 \cong \operatorname{Co}_3$,  and $c_2$ is Morita equivalent to $B_0(\cO A_5)$, then $B$ is Morita equivalent to (xxix).
\item If $H_1 \cong J_1$ and $c_2$ is nilpotent then $B$ is Morita equivalent to one of (xviii), (xxvi).
\item If $H_1 \cong J_1$ and $c_2$ is Morita equivalent to $\cO A_4$ then $B$ is Morita equivalent to one of (xviii), (xxvi).
\item If $H_1 \cong J_1$ and $c_2$ is Morita equivalent to $B_0(\cO A_5)$ then $B$ is Morita equivalent to (xxvii).
\end{enumerate}
\end{proposition}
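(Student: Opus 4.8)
\emph{Strategy.} The plan is to apply Method \ref{clubsuit} exactly as in the proof of Proposition \ref{twocomponents}, but exploiting one essential new feature: the first factor $H_1$ is now a \emph{fixed} (quasisimple or almost simple) group, so the conjugation action of $G$ on $c_1$ factors through the small group $\Out(H_1)$ rather than through the whole of $\cT(c_1)$. First I would record the structure of the two factors. Since $H_1$ carries a block $c_1$ with defect group $(C_2)^3$, the block $c_2$ has defect group $(C_2)^2$, so by the classification of blocks with Klein four defect group $c_2$ is either nilpotent (hence Morita equivalent to $\cO(C_2)^2$), Morita equivalent to $\cO A_4$, or Morita equivalent to $B_0(\cO A_5)$ --- precisely the three alternatives appearing in the statement. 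By Proposition \ref{quasisimpleblocks} and the classification of blocks with defect group $(C_2)^3$, the block $c_1$ is non-nilpotent, with inertial quotient $C_7$ when $H_1\cong\SL_2(8)$ and $C_7\rtimes C_3$ otherwise; moreover, for $H_1\in\{\operatorname{Co}_3,{}^2G_2(3^{2m+1})\}$ it lies in the Morita class of $\cO((C_2)^3\rtimes(C_7\rtimes C_3))$, equivalently that of $B_0(\Aut(\SL_2(8)))$, while $c_1=B_0(J_1)$ lies in a distinct Morita class. Put $N=HZ(G)=\ker(G\to\Out(c))$, so that $B$ is Morita equivalent to a crossed product of a basic algebra of $c=c_1\otimes c_2$ with the odd-order group $G/N$.

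\emph{The key bound.} Because $H_1$ and $H_2$ are each normal in $G$, the image of $G/N$ under the map $\gamma\beta\alpha$ of Method \ref{clubsuit} lies in $\cT(c_1)\otimes\cT(c_2)\leq\Pic(c_1\otimes c_2)$, and its $c_1$-component factors through $\Out(H_1)$. Now $\Out(\SL_2(8))=C_3$, $\Out(J_1)=\Out(\operatorname{Co}_3)=1$, and $\Out({}^2G_2(3^{2m+1}))=C_{2m+1}$ is odd; while, by Lemma \ref{picard}, the odd part of $\cT(c_1)$ has order at most $3$ (one checks that $N_{\GL_3(2)}(E)/E$ is trivial for $E=C_7\rtimes C_3$ and is $C_3$ for $E=C_7$, and that $\Out_D(A)\hookrightarrow\Hom(E,k^\times)$ is at most $C_3$). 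Hence the $c_1$-component of $G/N$ is cyclic of order $1$ or $3$. On the other side, by Lemma \ref{picard} together with Dade's result that the $\mathcal{F}$-stable Dade group carries no odd torsion, the odd part of $\Pic(c_2)$ has order at most $3$ when $c_2$ is nilpotent or Morita equivalent to $\cO A_4$, while $\Pic(B_0(\cO A_5))=C_2$ has trivial odd part (Proposition \ref{taugroups}(8)). Therefore $G/N$ is one of $1$, $C_3$, $C_3\times C_3$ --- and is $1$ or $C_3$ when $c_2\sim B_0(\cO A_5)$.

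\emph{The case analysis.} With $|G/N|$ dividing $9$, I would take a chain $N\lhd N_1\lhd G$ with prime indices and the corresponding block chain, and enumerate the pairs $(\omega,\zeta)$ of Theorem \ref{crossedproductseqclass}, the class $\zeta\in H^2(C_3\times C_3,k^\times)=C_3$ mattering only in the length-two abelian case. A ``rectangular'' action produces $c_1\otimes d$ with $d\in\{\cO(C_2)^2,\cO A_4,B_0(\cO A_5)\}$, where on the $\SL_2(8)$ factor a $C_3$-twist by the field automorphism replaces $B_0(\SL_2(8))$ by $B_0(\Aut(\SL_2(8)))$; recalling that $c_1$ already lies in the $B_0(\Aut(\SL_2(8)))$-class for $H_1\in\{\operatorname{Co}_3,{}^2G_2(3^{2m+1})\}$ and equals $B_0(J_1)$ for $H_1\cong J_1$, one reads off the classes $H_1\times(C_2)^2$, $H_1\times A_4$, $H_1\times A_5$, $\Aut(\SL_2(8))\times(C_2)^2$, $\Aut(\SL_2(8))\times A_4$, $\Aut(\SL_2(8))\times A_5$. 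A ``diagonal'' $C_3$ --- the field automorphism on $\SL_2(8)$ together with an order-three automorphism of $(C_2)^2$ --- produces $(\SL_2(8)\times(C_2)^2)\rtimes C_3$, and $C_3\times C_3$ with $\zeta\neq 0$ assembles the two $C_3$'s into $3^{1+2}_+$, producing $(\SL_2(8)\times(C_2)^2)\rtimes 3^{1+2}_+$. Using that the crossed product of $\cO(C_2)^2$ with $C_3$ is Morita equivalent to $\cO A_4$, and that permuting the three simple modules of $\cO A_4$ by $C_3$ gives a nilpotent block, the subcases ``$c_2$ nilpotent'' and ``$c_2\sim\cO A_4$'' collapse to the same list; tracking at each step which simple modules are permuted, as in the proof of Proposition \ref{a4ora5}, tells us when the chain must stop. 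Matching against Theorem \ref{maintheorem} yields the nine cases (I)--(IX) of the statement, and in each I would exhibit an explicit realiser --- the direct products above, the semidirect extensions $(\SL_2(8)\times(C_2)^2)\rtimes C_3$ and $(\SL_2(8)\times(C_2)^2)\rtimes 3^{1+2}_+$, and central-extension constructions as in \cite{pu11} --- falling back on a realisation ``as a crossed product'' when no genuine group is available.

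\emph{The main obstacle.} The crux is controlling the action on the quasisimple factor. For $J_1$ and $\operatorname{Co}_3$ this costs nothing, since $\Out=1$; but for ${}^2G_2(3^{2m+1})$ with $\Out=C_{2m+1}$ nontrivial one must check that the image of a field automorphism in $\Pic(c_1)$ --- which by the computation above lies in the $\Out_D(A)\cong C_3$ factor of $\Pic(\cO((C_2)^3\rtimes(C_7\rtimes C_3)))=C_3$ (Proposition \ref{taugroups}(2)) --- produces only crossed products remaining in the $\Aut(\SL_2(8))$-class, so that no $J_1$-type class leaks into cases (IV)--(VI). The cleanest way is to observe that any such crossed product is realised by a group extension of $\Aut(\SL_2(8))$; since $\Out(\Aut(\SL_2(8)))=1$ the extending $C_3$ must act by inner automorphisms, hence splits off, which pins down the Morita class. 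The remaining work, parallel to the treatment of classes (a) and (b) of Theorem \ref{maintheorem}, is the length-two bookkeeping: verifying for each admissible $\omega$ that the obstructing $3$-cocycle of Theorem \ref{crossedproductseqclass} vanishes (which follows post hoc from the examples), and identifying which nonzero value of $\zeta\in H^2(C_3\times C_3,k^\times)$ corresponds to the $3^{1+2}_+$-extension realising class (c).
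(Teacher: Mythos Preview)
Your overall strategy matches the paper's: apply Method~\ref{clubsuit} with the refinement that, since each $H_i\lhd G$, the image of $G/N$ lies in the product of the two factorwise contributions, and for the quasisimple factor $H_1$ one only needs $\Out(H_1)$ rather than all of $\cT(c_1)$. Your enumeration for $\SL_2(8)$, $J_1$ and $\operatorname{Co}_3$ is essentially what the paper does (though the paper uses the step-by-step Picard-group chain of Method~\ref{clubsuit} rather than your direct invocation of $\zeta\in H^2(C_3\times C_3,\mathcal{U}(Z(fbf)))$; note that this cohomology group is not obviously $C_3$, since the coefficient module is $\mathcal{U}(Z(fbf))$ and not $k^\times$, which is precisely why Method~\ref{clubsuit} avoids this route for non-cyclic quotients).

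The genuine gap is your treatment of $H_1\cong{}^2G_2(3^{2m+1})$. You correctly flag it as the main obstacle, but your proposed resolution does not work. You argue that the $c_1$-component of $G/N$ lands in $\Pic(c_1)\cong C_3$ and then claim that ``any such crossed product is realised by a group extension of $\Aut(\SL_2(8))$; since $\Out(\Aut(\SL_2(8)))=1$ the extending $C_3$ must act by inner automorphisms, hence splits off''. But this is circular: the fact that $\Out(\Aut(\SL_2(8)))=1$ only tells you that extensions \emph{of $\Aut(\SL_2(8))$} realise the trivial $\omega$; it says nothing about whether the $\omega$ arising from conjugation by $G$ on $c_1=B_0({}^2G_2(q))$ is trivial. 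If that $\omega$ were nontrivial you would still need to identify the Morita class of the resulting crossed product, and your argument does not do this --- in particular you have not excluded a $J_1$-type class, nor anything else.

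The paper closes this gap differently and more directly: it observes (citing \cite{g2inner}) that the irreducible character degrees of $B_0({}^2G_2(q))$ occur with multiplicity $1$ or $2$. Since $\Out({}^2G_2(q))\cong C_{2m+1}$ has odd order, any outer automorphism must fix every irreducible character of $c_1$, and hence acts as an inner automorphism on $c_1$. Thus $\beta\alpha(\Out_\star(H_1))=1$, i.e.\ the $c_1$-component of $G/N$ is \emph{trivial}, not merely contained in $C_3$. This immediately reduces cases (IV)--(VI) to the analysis of $\cT(c_2)$ alone, exactly parallel to the $J_1$ and $\operatorname{Co}_3$ cases. This character-degree-multiplicity argument is the missing ingredient in your proof.
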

\begin{proof}
\noindent We use the same method as in Proposition \ref{twocomponents}, plus knowledge of the outer automorphism groups of the various possibilities for $H_1$.

\sloppy Some Picard groups are known from \cite{eali18}: from Proposition 5.3 and 5.4 in \cite{eali18}   $\Pic(B_0(\cO \SL_2(8))) = C_3$, and $\Pic(B_0(\cO \Aut(\SL_2(8)))) = \Pic(B_0 (\cO ({}^2G_2(q))))= C_3$. From \cite[1.5]{kmn11} and \cite[3.3]{okuyama}, the unique nonprincipal block of $\operatorname{Co}_3$ that has defect group $(C_2)^3$ and the principal blocks of $\Aut(\SL_2(8))$ and ${}^2G_2(q)$ are Morita equivalent.

In each case we consider a chain of normal subgroups $\{N_i\}$ of length $t$ with $N_0=H$ and $N_t = G$, and the corresponding block chain $\{b_i\}$. Since the first component is known up to isomorphism, it is enough to look at $\Out(H_1)$, instead of the Picard group of $c_1$. The Picard group of $c_2$ in each case controls the number of possibilities for $\cT(c_2)$ and, hence, for nontrivial crossed products. Again, as in Proposition \ref{twocomponents}, since the action by $G$ stabilises both $H_1$ and $H_2$, we only need to consider the subgroup $\gamma(\beta(\Out(H_1))) \times \cT(c_2)$ (see \ref{clubsuit}), which is controlled by $\Out(H_1) \times \Pic(c_2)$. In each case, we denote the unique maximal subgroup of odd order of this subgroup of $\Pic(c)$ as $T$.

\begin{enumerate}[label=(\Roman*)]
\item If $H_1=\SL_2(8)$ and $c_2$ is Morita equivalent to $\cO (C_2)^2$ then $T=C_3 \times C_3$. We distinguish two situations: if $G/(HZ(G)) \cong C_3$, then $B$ is a crossed product of $c$ with $C_3$, and there are three possible embeddings in $T$, which identify the following possibilities for $B$:
\begin{itemize}
	\item (xv), realised when   $H=\SL_2(8) \times (C_2)^2$.
	\item (xix), realised when   $H=\SL_2(8) \times (C_2)^2$.
	\item (xxi), realised when   $H=\SL_2(8) \times (C_2)^2$.
\end{itemize}

Otherwise, $G/(HZ(G)) \cong C_3 \times C_3 = T$. Then we consider the group   $H'= H_1' \times H_2 \lhd G$, where $H_1' = \Aut(\SL_2(8))$. Now we can repeat the argument above for $c' = c_1' \otimes c_2$ to obtain that the maximal subgroup of odd order of $\Pic(c')$ that we need to consider is $T=C_3 \times C_3$, since $\Pic(c_1')=C_3$. Since $B$ is a crossed product of $c'$ with $C_3$, again we have three possible embeddings of $G/H'Z(G)$ in $T'$ and the following possibilities:
\begin{itemize}
	\item (vii), realised when  $G=(\SL_2(8) \rtimes 3^{1+2}_+) \times (C_2)^2$ and $H'$ is a maximal subgroup of $G$ with index $3$.
	\item (xxviii), realised when   $H'=\Aut(\SL_2(8)) \times (C_2)^2$.
	\item (c), realised when $G=(\SL_2(8) \times (C_2)^2) \rtimes 3^{1+2}_+$ and $H'$ is a maximal subgroup of $G$ with index $3$.
\end{itemize}

\item If $H_1=\SL_2(8)$ and $c_2$ is Morita equivalent to $\cO A_4$ then $T=C_3 \times C_3$. We repeat the argument as in the previous case, distinguishing two situations: if $G/(HZ(G)) \cong C_3$, then there are three possibilities for $B$:
\begin{itemize}
	\item (xxviii), realised when $H=\SL_2(8) \times A_4$.
	\item (vii), realised when $H=\SL_2(8) \times \operatorname{PSL}_3(7)$.
	\item (c), realised when $G=(\SL_2(8) \times (C_2)^2) \rtimes 3^{1+2}_+$ and $H$ is a maximal subgroup of $G$ with index $3$.
\end{itemize}

\sloppy Otherwise, $G/H \cong C_3 \times C_3$ and we consider $H'= H_1' \times H_2 \lhd G$, where   $H_1' = \Aut(\SL_2(8))$, and repeat the argument for $c'$ to obtain that $T'=C_3 \times C_3$. Hence, there are again three possible embeddings of $G/(H'Z(G)) \cong C_3$ in $T'$, which determine the following possibilities for $B$:
\begin{itemize}
	\item (xv), realised when $G=(\SL_2(8) \rtimes 3^{1+2}_+) \times A_4$ and $H$ is a maximal subgroup of $G$ with index $3$.
	\item (xix), realised when $H=\Aut(SL_2(8)) \times \operatorname{PSL}_3(7)$.
	\item (xxi), realised when $H=((\Aut(\SL_2(8)) \times C_3) \times \operatorname{PSL}_3(7)) \rtimes C_3$.
\end{itemize}

\item\sloppy  If $H_1=\SL_2(8)$ and $c_2$ is Morita equivalent to $B_0(\cO A_5)$ then $T=C_3$. Then $t=1$, so $N_1 = G$ and there is only one nontrivial possibility for the embedding of $G/(HZ(G)) \cong C_3$ in $T$, which corresponds to $B$ being Morita equivalent to (xxix).
\end{enumerate}

	From \cite{g2inner} $\Out({}^2G_2(3^{2m+1})) = C_{2m+1}$, and the degrees of irreducible characters of the principal block $c_1$ occur with multiplicity $1$ or $2$, which implies that if $H_1={}^2G_2(3^{2m+1})$ then every automorphism of $H_1$ acts as an inner automorphism on $c_1$. Hence, in our situation $\beta(\alpha(\Out_\star(H_1))) = 1$. Moreover, $\Out(\operatorname{Co}_3) = 1$, and $\Out(J_1)=1$.  So we can limit our analysis to the subgroup $\cT(c_2)$ in the next cases.
\begin{enumerate}[label=(\Roman*)]
\setcounter{enumi}{3}
\item If $H_1 = {}^2G_2(q)$, for any $q=3^{2m+1}, m \in \mathbb{N}$ or $H_1 = \operatorname{Co}_3$, and $c_2$ is nilpotent, then $T=C_3$. Then $N_1 = G$ and there is only one nontrivial possibility for the embedding of $G/(HZ(G)) \cong C_3$ in $T$, which gives $B$ Morita equivalent to (xxviii), realised when $H={}^2G_2(q) \times (C_2)^2$.

\item If $H_1 = {}^2G_2(q)$, for any $q=3^{2m+1}, m \in \mathbb{N}$ or $H_1 = \operatorname{Co}_3$, and $c_2$ is Morita equivalent to $\cO A_4$, then $T=C_3$. Then $N_1 = G$ and there is only one nontrivial possibility for the embedding of $G/(HZ(G)) \cong C_3$ in $T$, which gives $B$ Morita equivalent to (xix), realised when $N={}^2G_2(q) \times \operatorname{PSL}_3(7)$.

\item  If $H_1 = {}^2G_2(q)$, for any $q=3^{2m+1}, m \in \mathbb{N}$ or $H_1 = \operatorname{Co}_3$, and $c_2$ is Morita equivalent to $B_0(\cO A_5)$, then $T=\{1\}$, so $G=HZ(G)$.

\item If $H_1 = J_1$ and $c_2$ is Morita equivalent to $\cO (C_2)^2$ then $\Pic(\cO(C_2)^2) = (C_2)^2 \rtimes S_3$, so $T=C_3$. Then $G=N_1$, $B=b_1$ and there is only one nontrivial possibility for the Morita equivalence class of $B$: (xxvi), realised when $H=J_1 \times (C_2)^2$.

\item If $H_1 = J_1$ and $c_2$ is Morita equivalent to $\cO A_4$ then $\Pic(\cO A_4) = S_3$, so $T=C_3$. Then $G=N_1$, $B=b_1$ and there is only one nontrivial possibility for the Morita equivalence class of $B$: (xviii), realised when $H=J_1 \times \PSL_3(7)$.

\item If $H_1 = J_1$ and $c_2$ is Morita equivalent to $B_0(\cO A_5)$ then $\Pic(c_2) = C_2$, so $T=\{1\}$, so $G=H$. \qedhere
\end{enumerate}
\end{proof}


In Method \ref{clubsuit}, we assume that $NZ(G)=\ker(G \to \Out(b))$, and the reason is that we can always reduce to this situation. Suppose we are in the situation of Method \ref{clubsuit}, except without the hypothesis $NZ(G)=\ker(G \to \Out(b))$. Recall the definition of $G[b]$ in Section 2: we define $G[b]_\cO$ as the group of elements acting as inner automorphisms on the block $b$ of $\cO N$. Then $G[b]_\cO \lhd G[b]$ via the canonical map $b \to b \otimes_\cO k$. We identify $G[b]_\cO$ with $\ker (G \to \Out(b))$. 

From Proposition \ref{gbstarlemma} there is a unique block $\hat{b}$ of $G[b]_\cO$ that is source algebra equivalent to $b$. So in general we can consider $G[b]$ and $\hat{b}$ instead of $N$ and $b$ and apply Method \ref{clubsuit} (since $\Pic(\hat{b})=\Pic(b)$ and $\cT(\hat{b})=\cT(b)$) to obtain all possible Morita equivalence classes for $B$. However, in Proposition \ref{twocomponents} and \ref{mixed} we have used the group structure to reduce to particular subgroups of $\Pic(b)$: to generalize these arguments we need to show that when the kernel $G[b]_\cO$ is nontrivial we can still apply the propositions.

\begin{lemma} \label{twocomponentscorollary}
Let $G$ be a finite group and $B$ be a quasiprimitive block of $\cO G$ with defect group $D= (C_2)^5$. Suppose that there are $N_1, N_2 \lhd G$, $N=N_1 \times N_2$ with $N \lhd G$ and $[G:N]$ of odd order, and suppose that $B$ covers $G$-stable blocks $c_i$ of $\cO N_i$, so $B$ also covers the block $c\cong c_1 \otimes c_2$ of $\cO N$. Suppose that $C_G(N) \leq NZ(G)$. Then, for each fixed pair of Morita equivalence classes of $c_1, c_2$ listed in cases I-VII of Proposition {\normalfont \ref{twocomponents}} and cases I-IX of Proposition {\normalfont \ref{mixed}}, the Morita equivalence class of $B$ is still among the ones listed in that same case in Proposition {\normalfont \ref{twocomponents}} or Proposition {\normalfont \ref{mixed}}.
\end{lemma}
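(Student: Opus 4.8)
The plan is to reduce to the situation already handled by Propositions \ref{twocomponents} and \ref{mixed} by replacing $N$ with the inertial kernel $G[c]_\cO$, and then to observe that the proofs of those propositions use only Morita-invariant data together with one combinatorial constraint on the conjugation action of $G$, all of which survive the reduction.

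\textbf{Reduction.} I would set $M:=G[c]_\cO=\ker(G\to\Out(c))$. Then $N\leq M\lhd G$, $Z(G)\leq M$, and $[G:M]$ divides $[G:N]$ and is therefore odd, so $D\leq N\leq M$ since $D$ is a $2$-group. As $D\leq N$ and $c$ is $G$-stable, Lemma \ref{gbstarlemma} yields a unique block $\hat c$ of $\cO M$ covered by $B$ that is source algebra equivalent to $c$ — hence in the same Morita equivalence class as $c=c_1\otimes c_2$ — with $B$ the unique block of $\cO G$ covering $\hat c$, and $\hat c$ is $G$-stable. Replacing $(N,c)$ by $(M,\hat c)$, and iterating finitely many times if necessary, we reach the situation discussed before the lemma: $C_G(M)\leq C_G(N)\leq NZ(G)\leq M$ and $M=\ker(G\to\Out(\hat c))$, so $G/M$ embeds into $\Pic(\hat c)=\Pic(c)$ exactly as in Method \ref{clubsuit}. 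Crucially, the reduction leaves $N_1,N_2\lhd G$ (with $N_i\leq M$) unchanged, the blocks $c_1,c_2$ are still covered by $\hat c$, and the conjugation action of $G$ on $N_1$ and on $N_2$ is unchanged.

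\textbf{The constraint on $G$'s action survives.} Since $N_1$ and $N_2$ are both normal in $G$, every $g\in G$ induces by conjugation automorphisms $\tau_g^{(i)}$ of $N_i$ fixing $c_i$, with $\tau_g=\tau_g^{(1)}\times\tau_g^{(2)}$ on $N=N_1\times N_2$; hence the associated bimodule ${}_{\tau_g}c$ equals ${}_{\tau_g^{(1)}}c_1\otimes_\cO{}_{\tau_g^{(2)}}c_2$ and lies in $\cT_1^G\otimes\cT_2^G$, where $\cT_i^G\leq\cT(c_i)$ is the image of $G$. Therefore the image of $G/M$ (equivalently of $G/N$) in $\Pic(\hat c)=\Pic(c)$ lies in the subgroup $T$ that governs the case distinctions in the proofs of Propositions \ref{twocomponents}/\ref{mixed}; this $T$, and the maximal odd-order subgroups of the Picard groups of all intermediate blocks occurring in those proofs, are Morita-invariant and depend only on the (unchanged) data $N_1,N_2,c_1,c_2$ and the $G$-action. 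In the cases where this forces $T$ to be a $2$-group, $G/M$ maps trivially into it, so $G=M$ and $B$ is Morita equivalent to $\hat c$, hence to $c$.

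\textbf{Conclusion.} Now the argument of Propositions \ref{twocomponents}/\ref{mixed} applies to $(G,M,\hat c)$: choose a chain of normal subgroups $M=M_0\lhd\dots\lhd M_s=G$ with odd prime indices and the corresponding block chain $\hat c=b_0,\dots,b_s=B$; each $b_j$ is, up to Morita equivalence, a crossed product of a basic algebra of $b_{j-1}$ with $M_j/M_{j-1}$ in the sense of Theorem \ref{crossedproductseqclass}. A basic algebra of $\hat c$ coincides with one of $c$, the Picard groups of the intermediate blocks are those recorded in Proposition \ref{taugroups}, and the image of $G$ in $\Pic(\hat c)$ lies in $T$; so the possibilities at each stage are exactly the ones enumerated there (the group-theoretic realisations quoted in those proofs are used only to certify that each class occurs and are not needed for the present inclusion). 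Hence the Morita equivalence class of $B$ is among those listed in the relevant case of Proposition \ref{twocomponents} or \ref{mixed}.

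\textbf{Main obstacle.} The delicate point is the reduction together with the assertion that the proofs of Propositions \ref{twocomponents}/\ref{mixed} are phrased purely in terms of $T$ and Morita-invariant data: one must make sure that passing from $N$ to $G[c]_\cO$ neither enlarges the relevant subgroup $T\leq\Pic(c)$ nor produces crossed-product possibilities outside the enumerated list. An alternative that would let one invoke those propositions verbatim is to prove that $M=C_M(N_2)\cdot C_M(N_1)$ is a central product of normal subgroups $M_i:=C_M(N_{3-i})\supseteq N_i$ with $[M_i:N_i]$ odd and with the block of $\cO M_i$ covered by $B$ lying in the Morita class of $c_i$, and then replace the central product by a direct product via Lemma \ref{centralproducts}; but verifying this decomposition of $G[c]_\cO$ in general — in particular controlling the $2$-part of $C_G(N)$ — appears to require noticeably more work, so I would favour the route above.
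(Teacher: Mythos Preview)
Your proposal is correct and follows essentially the same route as the paper: replace $N$ by $M=G[c]_\cO$, use Lemma~\ref{gbstarlemma} to get a source-algebra equivalent block $\hat c$ with $\Pic(\hat c)=\Pic(c)$, observe that normality of $N_1,N_2$ in $G$ forces the image of $G/M$ to lie in $\Pic(c_1)\times\Pic(c_2)$, and then rerun the arguments of Propositions~\ref{twocomponents}/\ref{mixed}. The paper is slightly terser (it argues in one step that $\hat\gamma\hat\beta\hat\alpha$ is injective rather than iterating), and your explicit bimodule identification ${}_{\tau_g}c\cong{}_{\tau_g^{(1)}}c_1\otimes{}_{\tau_g^{(2)}}c_2$ makes the key containment more transparent than the paper's treatment.
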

\begin{proof}
We use the notation of Method \ref{clubsuit}. Let $G[c]_\cO=\ker(G \to \Out(c))$. Since each $N_i$ is a normal subgroup of $G$, $\alpha(G/N)$ is contained in $\Out_\star(N_1) \times \Out_\star(N_2)$. We consider the maps $\beta_i : \Out_\star(N_i) \to \Out(c_i)$ defined in the same way as $\beta$ in Method \ref{clubsuit}, and the map $\beta_1 \beta_2$, obtained by extending each $\beta_i$ to $\Out_\star(N_1) \times \Out_\star(N_2)$ such that $\beta_i|_{N_j}=\operatorname{id}_{N_j}$ when $i \neq j$. Since $c \cong c_1 \otimes c_2$, it is immediate that $\beta=\beta_1 \beta_2$. 
In particular then $\gamma(\beta(\alpha(G/N)))$ can be seen as a subgroup of $\Pic(c_1) \times \Pic(c_2)$, via injective maps $\gamma_i$ again defined as in Method \ref{clubsuit}. Note, however, that the map $\beta$ is not injective in general, as now $\ker(\beta) = G[c]_\cO$.

If we define $\hat{c}$ to be the unique block of $G[c]_\cO$ covered by $B$ and covering $c$ then from Lemma \ref{gbstarlemma} $\hat{c}$ is source algebra equivalent to $c$. Then in particular $\cT(c)=\cT(\hat{c})$, $\cE(c)=\cE(\hat{c})$ and $\Pic(c)=\Pic(\hat{c})$. Further, we can define maps $\hat{\alpha}$, $\hat{\beta}$ and $\hat{\gamma}$ replacing $N$ with $G[c]_\cO$ in the definitions of $\alpha, \beta, \gamma$. Since $N \leq G[c]_\cO$, $\hat{\alpha}$ is still injective, and $\hat{\beta}$ is also injective by definition of $G[c]_\cO$. Finally, $\hat{\gamma}$ is injective by definition. Further, repeating the argument above $\hat{\gamma}(\hat{\beta}(\hat{\alpha}(G/G[c]_\cO))) \leq \Pic(c_1) \times \Pic(c_2)$, and this time $\hat{\gamma}\hat{\beta}\hat{\alpha}$ is injective.

Hence, we can apply Proposition \ref{twocomponents} or Proposition \ref{mixed} (as appropriate), replacing $N$ and $c$ with $G[c]_\cO$ and $\hat{c}$, and obtain the same possibilities for the Morita equivalence class of $B$ since $\hat{\gamma}(\hat{\beta}(\hat{\alpha}(G/G[c]_\cO))) \leq \Pic(c_1) \times \Pic(c_2)$ from the discussion above, and hence we can replicate the proofs of both propositions.
\end{proof}

\begin{corollary} \label{isocliniccentralextensions}\quad \begin{enumerate} \setlength\itemsep{0em}
\item The nonprincipal blocks of $\cO(((C_2)^4 \rtimes 3^{1+2}_+) \times C_2)$ and $\cO(((C_2)^4 \rtimes 3^{1+2}_-) \times C_2)$ are all Morita equivalent. 
\item The nonprincipal blocks of $\cO((C_2)^5 \rtimes (C_7 \rtimes 3^{1+2}_+))$ and $\cO((C_2)^5 \rtimes (C_7 \rtimes 3^{1+2}_-))$ with $7$ simple modules are all Morita equivalent. 
\item The nonprincipal blocks of $\cO((\SL_2(8) \times (C_2)^2) \rtimes 3^{1+2}_+)$ and $\cO((\SL_2(8) \times (C_2)^2) \rtimes 3^{1+2}_-)$ with $7$ simple modules are all Morita equivalent. 
\end{enumerate}
\end{corollary}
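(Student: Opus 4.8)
The plan is to exploit the fact that, in each of the three statements, the two groups differ only in the choice of the extraspecial group $3^{1+2}_\pm$ of order $27$, which occurs there with its centre $Z\cong C_3$ acting trivially on the complement and hence central in the ambient group. I would identify the block in the statement with a twisted group algebra over the quotient by $Z$ — a group that is the \emph{same} for $G_+$ and $G_-$ — and then show that the defining $2$-cocycle does not see the difference between $3^{1+2}_+$ and $3^{1+2}_-$; since isomorphic $\cO$-algebras are Morita equivalent, this is enough.

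For part (1) one can argue directly as in the proof of Proposition \ref{gblocalnoncyclic}: a block of $\cO(((C_2)^4\rtimes 3^{1+2}_\pm)\times C_2)$ has the form $b_\pm\otimes\cO C_2$ with $b_\pm$ a block of $\cO((C_2)^4\rtimes 3^{1+2}_\pm)$ of defect group $(C_2)^4$, and the classification with that defect group in \cite{ea17} puts all the non-principal $b_\pm$ in one Morita equivalence class, which $-\otimes\cO C_2$ preserves. For the uniform argument (which also recovers part (1)), write $G_\pm=M\rtimes T_\pm$, where $M$ is the normal complement ($(C_2)^5$, $(C_2)^5$, $\SL_2(8)\times(C_2)^2$ respectively) and $T_+$ is $3^{1+2}_+$, $C_7\rtimes 3^{1+2}_+$, $3^{1+2}_+$; by construction $Z=Z(3^{1+2}_\pm)$ acts trivially on $M$, so $Z\le Z(T_\pm)\cap Z(G_\pm)$, and $\overline T:=T_\pm/Z$, $\overline G:=G_\pm/Z=M\rtimes\overline T$ are independent of the sign, with $C_3\times C_3=3^{1+2}_\pm/Z$ a quotient of $\overline T$. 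The block in the statement cannot lie over the trivial character of $Z$ (the blocks of $\cO\overline G$ have the wrong invariants — in part (1), for instance, $\cO\overline G$ has only the principal block), so it lies over a faithful $\lambda\colon Z\hookrightarrow\cO^\times$ and is therefore isomorphic, as an $\cO$-algebra, to the twisted group algebra $\cO_{\gamma_\pm}[\overline G]$, where $\gamma_\pm\in H^2(\overline G,\cO^\times)$ is the image under $\lambda_\ast$ of the class of $1\to Z\to G_\pm\to\overline G\to 1$. Since $G_\pm=M\rtimes T_\pm$ splits over $M$, a defining cocycle can be chosen to factor through $\overline T$, so this class is the inflation along $\overline G\twoheadrightarrow C_3\times C_3$ of the class $c_\pm\in H^2(C_3\times C_3,Z)$ of $1\to Z\to 3^{1+2}_\pm\to C_3\times C_3\to 1$; thus $\gamma_\pm=\operatorname{infl}(\lambda_\ast c_\pm)$, and it remains only to compare $\lambda_\ast c_+$ with $\lambda_\ast c_-$ in $H^2(C_3\times C_3,\cO^\times)$.

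A $2$-cocycle on $C_3\times C_3$ with values in an abelian group $A$ is, up to coboundary, the sum of a symmetric part lying in $\operatorname{Ext}^1_{\mathbb Z}(C_3\times C_3,A)$ and an alternating part lying in $\Hom(\wedge^2(C_3\times C_3),A)$, the latter recording the commutator pairing of the corresponding central extension. Now $\cO^\times$ is $3$-divisible — given a unit $u$, the polynomial $X^3-u$ has a simple root modulo the maximal ideal because $k$ is algebraically closed of characteristic $\ne 3$, so Hensel's lemma applies — hence $\operatorname{Ext}^1_{\mathbb Z}(C_3\times C_3,\cO^\times)=0$ and $H^2(C_3\times C_3,\cO^\times)\cong\Hom(\wedge^2(C_3\times C_3),\cO^\times)\cong C_3$; therefore $\lambda_\ast c_\pm$ is determined solely by the commutator pairing of $3^{1+2}_\pm$. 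The two extraspecial groups of order $27$ have the same nondegenerate alternating commutator pairing (this is precisely the isoclinism of $3^{1+2}_+$ and $3^{1+2}_-$), so for a compatible choice of the faithful characters $\lambda$ we obtain $\lambda_\ast c_+=\lambda_\ast c_-$, hence $\gamma_+=\gamma_-$ and $\cO_{\gamma_+}[\overline G]\cong\cO_{\gamma_-}[\overline G]$; the second non-principal block of each $G_\pm$ lies over $\lambda^{-1}$ and gives $\cO_{\gamma_\pm^{-1}}[\overline G]$, which is isomorphic to $\cO_{\gamma_\pm}[\overline G]$ since $\overline G$ admits an automorphism carrying $\gamma_\pm$ to $\gamma_\pm^{-1}$ (coming from an automorphism of $C_3\times C_3$ of determinant $-1$ normalising the action on $M$). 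Altogether every block occurring in the statement lies in a single Morita equivalence class.

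The part I expect to need the most care is the elementary bookkeeping at the start rather than the cohomology: one must check that in each ambient group $Z=Z(3^{1+2}_\pm)$ does act trivially on the complement and so is central (using the construction of \cite{pu11}), that $\overline G$ really is sign-independent, and — the most delicate point — pin down exactly which blocks of $\cO G_\pm$ carry the prescribed number of simple modules ($1$ in part (1), $7$ in parts (2) and (3)), so that the word ``all'' in the statement is warranted; this uses Proposition \ref{sambalecalcs} together with the classifications for defect groups $(C_2)^3$ and $(C_2)^4$ in \cite{ea14} and \cite{ea17}, plus a short Clifford-theory count for $\cO_{\gamma_\pm}[\overline G]$ regarded as a crossed product of $\cO[M\rtimes C_7]$ (or $\cO M$) with $C_3\times C_3$. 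Once that is in place, the cohomological heart — $3$-divisibility of $\cO^\times$ killing the symmetric part of $H^2$ and leaving only the isoclinism-insensitive commutator form — is immediate.
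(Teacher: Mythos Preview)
Your argument is correct and constitutes a genuinely different proof from the one in the paper. The paper does not compute any cohomology: for part~(1) it simply quotes \cite[3.3]{ea17}; for parts~(2) and~(3) it exhibits a common index-$3$ normal subgroup $N$ of $G_+$ and $G_-$ (namely $((C_2)^3\rtimes C_7)\times A_4\times C_3$ in~(2) and $(\SL_2(8)\times A_4)\times C_3$ in~(3)), observes that all blocks of $\cO N$ lie in one Morita class, and then invokes Proposition~\ref{twocomponents} (resp.\ Proposition~\ref{mixed}), which had already enumerated all crossed products of that class with $C_3$ via Picard groups and found a unique one with $7$ simple modules. So the paper's proof is a two-line corollary of machinery built for the main theorem.

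Your route instead identifies each nonprincipal block directly as a twisted group algebra $\cO_{\gamma_\pm}[\overline G]$ over the sign-independent quotient $\overline G=G_\pm/Z$, and then shows $\gamma_+=\gamma_-$ by the elegant observation that $3$-divisibility of $\cO^\times$ kills the symmetric (Ext) part of $H^2(C_3\times C_3,\cO^\times)$, leaving only the commutator pairing, which is isoclinism-invariant. This yields an \emph{isomorphism} of $\cO$-algebras rather than merely a Morita equivalence, is entirely self-contained (no Picard groups, no Propositions~\ref{twocomponents}--\ref{mixed}), and explains conceptually why the choice of extraspecial group is irrelevant. The trade-off is that the paper's proof is much shorter in context, since the crossed-product enumeration was needed anyway for Theorem~\ref{maintheorem}; your argument would stand alone but requires the cohomological digression and the bookkeeping you flag (checking $Z$ is central, that $O_{2'}(G_\pm)=Z$ so that each faithful $\lambda$ gives a single block, and that a determinant~$-1$ automorphism of $C_3\times C_3$ normalising the action on $M$ exists in each case).
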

\begin{proof}
This is immediate by considering in each case a maximal subgroup $N \lhd G$ of index $3$ and a block $b$ covered by $B$ of $\cO N$: \begin{enumerate}\itemsep0em
\item The claim immediately follows from \cite[3.3]{ea17}.
\item If we consider $N=((C_2)^3 \rtimes C_7) \times A_4 \times C_3$, it is a normal subgroup of both groups $G_1$ and $G_2$ listed, and it has only three blocks, all Morita equivalent to $\cO(((C_2)^3 \rtimes C_7) \times A_4)$. Since $N$ and $G_i$ satisfy the hypothesis of Proposition \ref{twocomponents}, there is only one possible Morita equivalence class for $B$ with $7$ simple modules.
\item If we consider $N=(\SL_2(8) \times A_4) \times C_3$, it is a normal subgroup of both groups $G_1$ and $G_2$ listed, and it has only three blocks with defect group $(C_2)^5$, all Morita equivalent to $B_0(\cO(\SL_2(8) \times A_4))$. Since $N$ and $G_i$ satisfy the hypothesis of Proposition \ref{mixed}, there is only one possible Morita equivalence class for $B$ with $7$ simple modules.\qedhere
\end{enumerate}
\end{proof}

\section{Blocks with defect group $(C_2)^5$}

First we classify the blocks with a normal defect group.

\begin{theorem} \label{normalD}
Let $B$ be a block of $\cO G$ where $G$ is a finite group, and suppose that $B$ has a normal defect group $D \cong (C_2)^5$. Then $B$ is Morita equivalent to $\cO (D \rtimes E)$ where $E$ is a subgroup of $\Aut(D)=\GL_5(2)$ of odd order, or to (a) or (b) as in Theorem \ref{maintheorem}.
\end{theorem}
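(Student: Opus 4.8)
The plan is to use Külshammer's description of blocks with normal defect group to reduce $B$ to a twisted group algebra $\cO_\alpha(D\rtimes E)$, and then to pin down the possibilities for the pair $(E,\alpha)$ using Proposition~\ref{sambalecalcs} and a Schur multiplier computation. Since $D$ is abelian, $E=N_G(D,e)/C_G(D)$ is a $2'$-subgroup of $\Aut(D)=\GL_5(2)$, hence of odd order; and since $D\lhd G$, the main theorem of \cite{kul85} shows $B$ is Morita equivalent to $\cO_\alpha(D\rtimes E)$, with $E$ acting on $D$ as the inertial quotient and $\alpha\in H^2(E,\cO^\times)$ (trivial coefficients). As $|E|$ is odd, the kernel $1+\mathfrak{m}$ of $\cO^\times\twoheadrightarrow k^\times$ satisfies $H^n(E,1+\mathfrak{m})=0$ for $n\geq 1$, whence $H^2(E,\cO^\times)\cong H^2(E,k^\times)\cong\Hom(M(E),k^\times)\cong M(E)$, the Schur multiplier of $E$.

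Next I would run through the list of inertial quotients in Proposition~\ref{sambalecalcs}. Cyclic groups have trivial Schur multiplier, and $M(C_n\rtimes C_m)=1$ whenever $\gcd(m,n)=1$ (disposing of $C_7\rtimes C_3$ and $C_{31}\rtimes C_5$), so the only cases with $M(E)\neq 1$ are $E=C_3\times C_3$, where $M(E)\cong C_3\otimes C_3\cong C_3$, and $E=(C_7\rtimes C_3)\times C_3$, where $M(E)\cong M(C_7\rtimes C_3)\times M(C_3)\times\big((C_7\rtimes C_3)^{\mathrm{ab}}\otimes C_3\big)\cong C_3$. In every other case $\alpha$ is trivial and $B$ is Morita equivalent to $\cO(D\rtimes E)$, as required. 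In the two exceptional cases $H^2(E,k^\times)\cong C_3$, and its two nonzero classes are interchanged by the automorphism of $E$ that inverts the appropriate direct factor $C_3$; since that automorphism is induced by conjugation in $\GL_5(2)$ (an order-$3$ element of $\GL_2(2)$ is conjugate to its inverse), one obtains $\cO_\alpha(D\rtimes E)\cong\cO_{\alpha^2}(D\rtimes E)$, so for each of these two $E$ there is, up to isomorphism, a single nontrivial twisted group algebra.

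It then remains to identify that algebra with the nonprincipal block named in the statement. For $E=C_3\times C_3$, write $D=(C_2)^4\times C_2$ with $E$ acting faithfully as $C_3\times C_3$ on the $(C_2)^4$ and trivially on $C_2=C_D(E)$; the extraspecial group $3^{1+2}_+$ is a central extension of $E$ by $Z\cong C_3$ whose class generates $M(E)$, and $\Gamma=\big((C_2)^4\rtimes 3^{1+2}_+\big)\times C_2$ is the group of class~(a), with $3^{1+2}_+$ acting on $(C_2)^4$ through $3^{1+2}_+\twoheadrightarrow E$ (so $Z$ acts trivially). The blocks of $\cO\big((C_2)^4\rtimes 3^{1+2}_+\big)$ are indexed by $\Irr(Z)$: the trivial character gives the principal block $\cO\big((C_2)^4\rtimes(C_3\times C_3)\big)$, and each nontrivial character gives $\cO_\beta\big((C_2)^4\rtimes(C_3\times C_3)\big)$ with $\beta$ nontrivial; tensoring with the $\cO C_2$ direct factor, the nonprincipal block of $\Gamma$ is $\cO_\beta(D\rtimes E)\cong\cO_\alpha(D\rtimes E)$, so $B$ is Morita equivalent to~(a). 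The case $E=(C_7\rtimes C_3)\times C_3$ is parallel, with $3^{1+2}_+$ replaced by $C_7\rtimes 3^{1+2}_+$ (where $3^{1+2}_+$ acts on $C_7$ through a surjection onto $C_3\leq\Aut(C_7)$): this is a central extension of $E$ by $Z(3^{1+2}_+)\cong C_3$ whose commutator pairing generates $M(E)$, the group $\Gamma'=(C_2)^5\rtimes(C_7\rtimes 3^{1+2}_+)$ is the group of class~(b), and its nonprincipal block in question is again $\cO_\alpha(D\rtimes E)$, giving $B$ Morita equivalent to~(b).

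The step I expect to require the most care is the last one: checking that $3^{1+2}_+$ and $C_7\rtimes 3^{1+2}_+$ really are representation groups of the respective $E$ realising a generator of $M(E)$ with centre acting trivially on $D$, and that the associated nonprincipal blocks are twisted group algebras of $D\rtimes E$ for the nontrivial cocycle. The homological input in the first two steps — the $\cO^\times$-versus-$k^\times$ comparison and the Schur multiplier computations — is routine.
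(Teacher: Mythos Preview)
Your proof follows the same overall strategy as the paper: apply K\"ulshammer's theorem to reduce to a twisted group algebra $\cO_\alpha(D\rtimes E)$, compute Schur multipliers to see that $\alpha$ can be nontrivial only for $E\in\{C_3\times C_3,\,(C_7\rtimes C_3)\times C_3\}$, and then identify the nontrivial twists with the blocks~(a) and~(b). The one substantive difference is in how you collapse the two nontrivial classes of $H^2(E,k^\times)\cong C_3$ to a single Morita class. The paper invokes Corollary~\ref{isocliniccentralextensions}, whose proof uses the Picard-group machinery of Section~4; you instead observe that an element of $\GL_5(2)$ normalising $E$ and inverting one $C_3$ factor induces an automorphism of $D\rtimes E$ carrying $\alpha$ to $\alpha^{2}$, whence $\cO_\alpha(D\rtimes E)\cong\cO_{\alpha^{2}}(D\rtimes E)$. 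Your argument is more elementary and in fact yields an isomorphism of algebras rather than merely a Morita equivalence; the paper's route has the compensating advantage that Corollary~\ref{isocliniccentralextensions} is needed elsewhere and simultaneously disposes of the $3^{1+2}_-$ variants. The identification step you flag as requiring care---that the nonprincipal blocks of the groups in~(a) and~(b) really are the twisted algebras for the nontrivial cocycle---is treated in the paper by exactly the central-extension realisation you sketch (see \cite[10.5]{thevenaz}), so no additional idea is needed there.
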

\begin{proof} By \cite{kul85} a block with a normal defect group $D$ is Morita equivalent to a twisted group algebra $\cO_\alpha (D \rtimes E)$, where $E$ is the inertial quotient. Moreover, each possible $\alpha$ can be chosen as $\beta^{-1}$ where $ \beta \in H^2(E,k^\times)$ (see also \cite[6.14]{lin18}).

We have listed all possible inertial quotients in Proposition \ref{sambalecalcs}. Each group algebra $\cO (D\rtimes E)$ is also a block and, therefore, a representative of its class. 

It is a standard fact that twisted group algebras $\cO_\alpha H$ can be realised as blocks of the ordinary group algebra of a central extension $\hat{H}$ of $H$ by a $p'$-group \cite[10.5]{thevenaz}, so to produce examples of these Morita equivalence classes it is enough to look at odd central extensions of $D \rtimes E$, and hence at the Schur multiplier $M(E)$ of $E$. Now $M(E)$ is trivial whenever $E$ is cyclic (see \cite[1.2.10]{lin18}) and when $E=C_7 \rtimes C_3$ or $E=C_{31} \rtimes C_5$. When $E=C_3 \times C_3$ or $E=(C_7\rtimes C_3) \times C_3$, $M(E)=C_3$, giving two nontrivial possibilities for $\alpha$ in each case. 

When $E=C_3 \times C_3$, one of these corresponds to the Morita equivalence class whose representative is one of the nonprincipal blocks of $\cO ((C_2)^4 \rtimes 3^{1+2}_+) \times C_2$, where the center of the extraspecial group acts trivially. 

When $E=(C_7\rtimes C_3) \times C_3$, one of these corresponds to the Morita equivalence class whose representative is one of nonprincipal blocks of $\cO ((C_2)^5 \rtimes (C_7 \rtimes 3^{1+2}_+))$, where again the center acts trivially. 

From Corollary \ref{isocliniccentralextensions}, in each case choosing the other possibility for $\alpha$ (which corresponds to choosing $3^{1+2}_-$ instead of $3^{1+2}_+$) gives Morita equivalent blocks.\end{proof}

\noindent We prove our main result.

\begin{proof}[Proof of Theorem \ref{maintheorem}]
Let $B$ be a block of $\cO G$ for a finite group $G$ with defect group $D=(C_2)^5$, such that $B$ is not Morita equivalent to any of the blocks in the statement of the theorem and such that $([G:O_{2'}(Z(G))],|G|)$ is minimised in the lexicographic ordering.
First, we show that these hypotheses on $B$ imply three important facts: \begin{itemize}
\item[(I)] $B$ is quasiprimitive, that is, for any normal subgroup $N \lhd G$ any block of $\cO N$ covered by $B$ is $G$-stable.
In fact, let $N \lhd G$, and let $b$ be a block of $\cO N$ covered by $B$. We write $I_G(b)$ for the stabiliser of $b$ under conjugation by $G$. Then we can consider the Fong-Reynolds correspondent $B_I$ as in Theorem \ref{fong1}, the unique block of $I_G(b)$ covering $b$ and with Brauer correspondent $B$, that is Morita equivalent to $B$ and shares a defect group with it. By minimality, $I_G(b)=G$, and the same is true for any block of any normal subgroup of $G$.
\item[(II)] If there is a normal subgroup $N \lhd G$ such that $B$ covers a nilpotent block $b$ of $\cO N$, then $N \leq O_2(G)Z(G)$. This follows from minimality and quasiprimitivity, using Corollary \ref{kulshammerpuigcorollary}. In particular, note that this implies $O_{2'}(G) \leq Z(G)$.
\item[(III)] $G$ does not have any normal subgroup of index $2$: in fact, suppose by contradiction that there is $N \lhd G$ with index $2$. Let $b_N$ be the unique block of $\cO N$ covered by $B$. Then from \cite[5.3.5]{feit} $B$ is also the unique block covering $b_N$, so from \cite[15.1]{alp151} $D \cap N$ is a defect group of $b_N$ and $DN/N$ is a Sylow $2$-subgroup of $G/N$. Hence, $b_N$ has defect group $(C_2)^4$ and from \cite{ea17} we know all the possibilities for its Morita equivalence class and its inertial quotient. From the main theorem of \cite{kk96} $\overline{B}=B \otimes_\cO k$ is Morita equivalent to $(b_N \otimes_\cO k) \otimes kC_2$. Moreover, from Maschke's theorem (see \cite[3.3.2]{gor07}) there is an $E$-stable decomposition of $D$ as $(D\cap N) \times Q$ where $Q=C_2$. In particular then $Q \leq C_D(E)$. Then $E$ is cyclic of order $3$, $5$, $7$ or $15$ or it is one of $(C_3 \times C_3)$ or $(C_7 \rtimes C_3)_1$. Then by Proposition \ref{index2} $B$ is Morita equivalent to $b_N \otimes \cO C_2$, a contradiction since, from \cite{ea17}, all such blocks already appear in the list.
\end{itemize}
Recall the definition of the generalised Fitting subgroup $F^*(G)$ as in \cite{as00}: we write $E(G)$ for the layer of $G$, a normal subgroup that is the central product of the components of $G$ (the subnormal quasisimple subgroups). We write $F(G)$ for the Fitting subgroup. The generalised Fitting subgroup is defined as $F^*(G)=F(G)E(G) \lhd G$, and it is a central product of $F(G)$ and $E(G)$. A fundamental property is that $C_G(F^*(G))\leq F^*(G)$, so there is an injective group homomorphism from $G/F^*(G)$ to $\Out(F^*(G))$. 

In particular, in our situation, (II) implies that $F(G)=O_{2'}(Z(G))O_2(G)$, and (I) implies that there is a unique block of $F^*(G)$ covered by $B$. We call it $b^*$. By minimality, we can suppose that no subgroup of $O_{2'}(Z(G))$ is a direct factor of $G$. 

If $E(G)=1$, then $F^*(G)=O_2(G)Z(G)$ and since $D$ is a defect group   $O_2(G) \leq D$. Since $D$ is abelian: $$D \leq C_G(O_2(G)) = C_G(F^*(G)) \leq F^*(G) = O_2(G)Z(G)$$ 
Hence, $D=O_2(G)$, a contradiction by Proposition \ref{normalD}. So $E(G) \neq 1$.

Write $E(G) = L_1 * \dots * L_t$, where each $L_i$ is a component of $G$. We prove that $t \leq 2$. Let $b_E$ be the unique block of $E(G)$ covered by $B$, and let $D_E = D \cap E(G)$ be its defect group. For each $i$, let $b_i$ be the unique block of $L_i$ covered by $b_E$ (note that $E(G)=L_i C_{E(G)}(L_i)$, so $b_i$ is $E(G)$-stable). Let $D_i$ be its defect group. The components in $E(G)$ are permuted by the action of $G$ by conjugation. Consider a $G$-orbit of $u$ components $\{L_{i_1} * \dots * L_{i_u}\}$, and let $L$ be the subgroup generated by them. Note that all the components in the same orbit are isomorphic, with isomorphic blocks $b_{i_j}$ and defect groups $D_{i_j}$. Since $L \lhd G$, it has a unique block $b_L$ covered by $B$, that covers each $b_{i_j}$ as determined before. The block $b_L$ cannot be nilpotent because of (II). Hence, from Lemma \ref{centralproducts}, every block $b_{i_j}$ of $L_{i_j}$ is also not nilpotent, so the inertial quotient $E_{i_j}$ of $b_{i_j}$ is nontrivial. Then $[D_{i_j}, E_{i_j}] \geq (C_2)^2$, which implies that $D_{i_j} / (D_{i_j} \cap Z(L_{i_j})) \geq (C_2)^2$. Again from Lemma \ref{centralproducts} the defect group of $b_L$, $D_L = D \cap L$ is a central product of all $D_{i_j}$. Then $D_L \geq ((C_2)^2)^{u}$. Write the $G$-orbits of components in $E(G)$ as $L^j$, $j=1, \dots, w$, each the central product of $u_j$ isomorphic components. Then:
$$|D|= 2^5 = \sum_{i=1}^w |D_{L^i}| \geq (2^2)^{\sum_{j=1}^w u_j}$$
It follows that either there are two $G$-stable components $L_1$ and $L_2$, or there is a single orbit of components with $u_1=2$. In the latter case $G$ would have a normal subgroup of index $2$, the kernel of the homomorphism $G \to S_2$ given by the permutation of the components, a fact that contradicts (III). So $t \leq 2$, and if $t=2$ then both components are normal in $G$. 

\sloppy If $t=2$, so that $E(G)=L_1 * L_2$, then $(L_1 \cap L_2) \leq Z(E(G))$. In particular $O_2(Z(E(G))) \leq O_2(G) \leq D$, so if $O_2(Z(E(G)))$ is not trivial then it has order $2$, since for each component $|D_i/(D_i \cap Z(L_i)| \geq 2$. \\

Consider $O_2(G) \lhd G$. Then $C:=C_G(O_2(G))\lhd G$. Let $c$ be the unique block of $\cO C$ covered by $B$. Then it has defect group $D \cap C = D$, and $C_G(D) \leq C$ since $O_2(G) \leq D$. From \cite[15.1]{alp151} then $B$ is the unique block covering $c$, so in particular $[G:C]$ is odd since they share a defect group. Since $F^*(G) \leq C$, then $C/F^*(G) \hookrightarrow \operatorname{Out}(E(G))$, since $C$ centralises $F(G)=O_2(G)O_{2'}(Z(G))$, so $C/F^*(G)$ is solvable because of Schreier's conjecture (since $t \leq 2$). Moreover, $G/C$ is solvable since it has odd order. Hence since $(G/F^*(G))/(C/F^*(G)) = G/C$ is solvable and $C/F^*(G)$ is solvable then $G/F^*(G)$ is also solvable. From Lemma \ref{pcore}, $DF^*(G)/F^*(G)$ is a Sylow $2$-subgroup of $G/F^*(G)$.


Suppose that $t=1$. In this case $E(G)$ is a quasisimple group, so from Proposition \ref{quasisimpleblocks} we know all the possibilities that can occur: $b_E$ can be among blocks of $\SL_2(32)$, $\SL_2(16)$, $\SL_2(8)$, $J_1$, $\operatorname{Co}_3$, ${}^2G_2(3^{2n+1}) (n>0)$, $D_n(q)$, $E_7(q)$, or it can be nilpotent covered, or as in case (iv) of the Proposition.

First, note that if $L_1$ is as in case (iv) of Proposition \ref{quasisimpleblocks}, then $b_E$ is the block of a central extension by an elementary abelian $2$-group of a block $\overline{b_E}$ with defect group $C_2 \times C_2$. In particular, from Proposition 2.7 in \cite{ea17} $\overline{b_E}$ is source algebra equivalent to $\cO A_4$ or $B_0(\cO A_5)$, so from \cite[1.22]{sam14} $b_E$ has inertial quotient $(C_3)_1$. From Corollary 1.14 in \cite{pu01}\footnote{Note that whenever a block has an abelian defect group there are no essential pointed groups}, the first intermediate central extension by a $C_2$ of $\overline{b_E}$ is basic Morita equivalent to the principal block of a central extension by $C_2$ of $A_4$ or $A_5$, and the only central extension with abelian defect groups is the direct product. Now we consider a central extension by another $C_2$, and repeat the argument. A repeated application of Corollary 1.14 in \cite{pu01} yields that in this case $b_E$ is Morita equivalent to $\cO(A_4 \times (C_2)^y)$ or $B_0(\cO (A_5 \times (C_2)^y))$ for $y=0,1,2$ or $3$. 

Now we examine each possibility: 

If $E(G)=\SL_2(32)$, $b_E$ has defect group $D$, hence $D \leq F^*(G)$ and since $\Out(\SL_2(32))=C_5$ the only possibilities for $G$ are $\SL_2(32)$ and $\Aut(\SL_2(32))$, which are (xxiii), (xxxi) in the list.

If $E(G)=\SL_2(16)$, then again $\Out(F^*(G))=\Out(\SL_2(16))$ which has order $4$. In particular if $G \neq F^*(G)$ then $G$ has a normal subgroup of index $2$, which is a contradiction. So $O_2(G)=C_2$ and $G=F^*(G)=\SL_2(16) \times C_2$, which is (xii) in the list.

If $E(G)=\SL_2(8)$, ${}^2G_2(q)$, $\operatorname{Co}_3$ or $J_1$, then $|D\cap E(G)|=8$, and since $\Out(E(G))$ has odd order in each case we can assume that $D \leq F^*(G)$, so $O_2(G)=(C_2)^2$. We can then apply Proposition \ref{mixed} to obtain that the block is already in the list.

In every other case, from Lemma \ref{pcore} there is a chain of normal subgroups and blocks:
$$\begin{array}{ccccccccc}
F^*(G) &  \stackrel{\ell_o}{\lhd} &  N_o & \stackrel{\ell_e}{\lhd}& N_e & \stackrel{\ell_b}{\lhd} &  G[b_e] & \stackrel{\ell_g}{\lhd} & G \\
b^* &&  b_o && b_e && \hat{b} && B
\end{array}$$
where $N_o \leq G[b^*]$, $\ell_o, \ell_b, \ell_g$ are odd numbers and $\ell_e = [D:D\cap F^*(G)]$. Note that $\ell_e = 2$ is a contradiction to (III), and $\ell_e \leq 8$ otherwise $b^*$ is nilpotent, contradicting (II). 

We consider the corresponding block chain $b^*, b_o, b_e, \hat{b}, B$, noting that each block is covered by $B$ and, hence, is $G$-stable. From Lemma 3.1 in \cite{eali18c} we know that $D \leq G[b^*]$, and hence $N_e \leq G[b^*]$ since $DF^*(G)/F^*(G)$ is a Sylow $2$-subgroup of $G/F^*(G)$ from Lemma \ref{pcore}. Further, $b_o$ is source algebra equivalent to $b^*$ by Lemma \ref{gbstarlemma}. Therefore, $b_e$ is Morita equivalent to $b^* \otimes \cO (C_2)^{\log_2(\ell_e)}$, since if $D \leq F^*(G)$ then $\ell_e =1$ so $b_o = b_e$, and when $\ell_e=4 $ or $8$, $b_e$ satisfies the hypothesis of Proposition \ref{index2}, with an identical argument as in the proof of (III) above. Again from Lemma \ref{gbstarlemma}, $\hat{b}$ is source algebra equivalent to $b_e$. Note that, in light of the discussion preceding Lemma \ref{twocomponentscorollary}, the pair $(G[b_e],\hat{b})$ satisfies the hypothesis of Method \ref{clubsuit}.

Suppose that $\ell_e = 1$, so $D \leq F^*(G)$ and $b^*=b_e$. If $b_1$ is nilpotent covered then it is inertial, and hence so is $b^*$. Hence by Proposition \ref{nilpotentcovered} $B$ is inertial, a contradiction by Theorem \ref{normalD}. Otherwise $\hat{b}$ is Morita equivalent to (ii) or (iii), a contradiction by Proposition \ref{a4ora5}.

Now suppose that $\ell_e =4$. If $|D \cap L_1|=8$ then if $b^*=b_1$ is nilpotent covered then by Proposition \ref{quasisimpleblocks} it is of type $A_n$ or $E_6$, and hence by \cite[Table 5]{atlas} $G$ has a normal subgroup of index $2$, which is a contradiction to (III). If instead $b_1$ is Morita equivalent to $\cO (A_4 \times C_2)$ or $B_0(\cO(A_5 \times C_2))$, then $\hat{b}$ is as in the hypothesis of Proposition \ref{a4ora5}, which gives a contradiction. If $|D \cap L_1|=4$ then $O_2(G)=C_2$ and $b_1$ is Morita equivalent to $\cO (A_4 \times C_2)$ or $B_0(\cO(A_5 \times C_2))$, so $\hat{b}$ is again as in the hypothesis of Proposition \ref{a4ora5}, a contradiction.

Finally, suppose that $\ell_e = 8$. Then $|D \cap L_1|=4$ and $|O_2(G)|=1$, otherwise $b^*$ would be nilpotent contradicting (II). So $b^*$ is Morita equivalent to $\cO A_4$ or $B_0(\cO A_5)$, again a contradiction since then $\hat{b}$ satisfies the hypothesis of Proposition \ref{a4ora5}.\\

Now suppose that $t=2$, so $E(G)=L_1 * L_2$, where $L_i \lhd G$. Suppose that $D$ is not contained in $F^*(G)$. Since $|D \cap F^*(G)| \geq (2^2)^2$, then $|G/F^*(G)|_2=2$. Then $G$ has a normal subgroup of index $2$, contradicting (III). So we can assume that $D \leq F^*(G)$. We also assume without loss of generality that $|D \cap L_1| \geq |D \cap L_2|$.

We consider again the chain of normal subgroups and blocks, which now reduces to:
$$\begin{array}{cccccc}
F^*(G) & \stackrel{\ell_b}{\lhd} &  G[b^*] & \stackrel{\ell_g}{\lhd} & G \\
b^* &&  \hat{b} && B
\end{array}$$
since $D \leq F^*(G)$. Recall that $\hat{b}$ is source algebra equivalent to $b^*$ (Lemma \ref{gbstarlemma}). 

Let $L_\cap = L_1 \cap L_2$, and note that $L_\cap \leq Z(G)$. In particular the map $\alpha$ defined in \ref{clubsuit} with $N=F^*(G)$ factors through:
$$(G/Z(G)) / (F^*(G)/Z(G)) \to \Out_\star(L_1/L_\cap \times L_2/L_\cap) \leq \Out(L_1/Z(L_1)) \times \Out(L_2/Z(L_2))$$ 
where the last inclusion holds because each $L_i$ is normal in $G$ (see also \cite{bidwell} and then \cite[7.6]{sam14}). 

If $|D \cap L_1|=4$, then $O_2(G)=C_2$ and $L_\cap$, $Z(L_1)$ and $Z(L_2)$ have odd order. In this situation $b_1$ is source algebra equivalent to $\cO A_4$ or $B_0 (\cO A_5)$, and so is $b_2$. Then we can apply Lemma \ref{twocomponentscorollary} and Proposition \ref{twocomponents} (considering $H_1 = L_1 \times O_2(G)$) to obtain a contradiction. 

So without loss of generality we can suppose that $|D \cap L_1|=8$.

Suppose that $|D \cap L_2|=4$, so $b_2$ is source algebra equivalent to $\cO A_4$ or $B_0(\cO A_5)$. Then in particular $|O_2(G)|=1$, and $L_\cap$ has odd order. If $L_1 = \SL_2(8)$, ${}^2G_2(3^{2m+1})$, $\operatorname{Co}_3$ or $J_1$ then Lemma \ref{twocomponentscorollary} and Proposition \ref{mixed} give a contradiction. 

Otherwise, either $b_1$ is nilpotent covered, so basic Morita equivalent to a block with a normal defect group $(C_2)^3$, or $b_1$ is as in case (iii) of Proposition \ref{quasisimpleblocks}. In both cases, Lemma \ref{twocomponentscorollary} and Proposition \ref{twocomponents} give a contradiction. 

Then $|D \cap L_2|=8$, and $E(G)$ is a central product of $L_1, L_2$ such that $Z:= C_2 \leq L_\cap$. From Lemma \ref{centralproducts} we can suppose without loss of generality that $L_\cap = Z$. Note that $Z \leq O_2(G) \leq F^*(G)$.

Consider $\overline{b^*}$, the unique block of $E(G)/Z = L_1/Z \times L_2/Z$ dominated by $b^*$. Since $E(G)/Z = L_1/Z \times L_2/Z$, the direct product of two simple groups, Lemma \ref{centralproducts} and \cite{ea17} imply that $\overline{b^*}$ is source algebra equivalent to the principal block of one among $\cO(A_4 \times A_4)$, $\cO(A_4 \times A_5)$ or $\cO(A_5 \times A_5)$. In particular, both $b^*$ and $\overline{b^*}$ have inertial quotient isomorphic to $C_3 \times C_3$. Recall that source algebra equivalences are realised by trivial source bimodules, and hence are in particular basic Morita equivalences. Then the pair $(b^*,\overline{b^*})$ satisfies the hypothesis of Corollary 1.14 in \cite{pu01}, so $b^*$ is basic Morita equivalent to the principal block of a central extension by $C_2$ of $X \times Y$ where $X,Y \in \{A_4, A_5\}$: the only possibility for $b^*$ to have an abelian defect group is that the central extension is actually a direct product with $C_2$. Hence, there are three possibilities: \begin{itemize}
\item[(1)] That $b^*$ is basic Morita equivalent to $\cO (A_4 \times A_4 \times C_2)$. But then it is inertial, so by Proposition \ref{nilpotentcovered} the block $B$ is also inertial, which is a contradiction.
\end{itemize}
Since any central product of two perfect groups is perfect, we can use the same argument as in Lemma 7.6 in \cite{sam14} to show that $\Aut(E(G)) \leq \Aut(L_1/Z  \times L_2/Z)$. Then in particular $\Out_\star(E(G)) = \Out_\star(E(G)/Z) = \Out_\star(L_1) \times \Out_\star(L_2)$, since each $L_i$ is normal in $G$. Then by direct inspection of all the possibilities (listed in \cite{atlas}) $G/F^*(G)$ is a supersolvable group, as the only non-supersolvable outer automorphism group of a quasisimple group is given by triality in type $D_4(q)$, which however involves automorphisms with even order.
\begin{itemize}
\item[(2)] That $b^*$ is basic Morita equivalent to $B_0(\cO (A_5 \times A_5 \times C_2))$. We can apply Proposition \ref{a5a5c2} to show that $B$ is Morita equivalent to $b^*$ and hence to (x), a contradiction. 
\item[(3)] That $b^*$ is basic Morita equivalent to $B_0(\cO (A_4 \times A_5 \times C_2))$. Then $G/G[b^*]_\cO \leq \cT(b^*)$, and $\cT(b^*) \leq \cE (B_0(\cO (A_4 \times A_5 \times C_2)))$ since the equivalence is basic, so $G/G[b^*]_\cO \leq C_3$  by Proposition \ref{taugroups}. So using Method \ref{clubsuit} there is only one possible Morita equivalence class for $B$, the one of $B_0(\cO (A_5 \times (C_2)^3))$, realised when $F^*(G) = \PSL_3(7) \times A_5 \times C_2$. Then $B$ is Morita equivalent to (iii), a contradiction.
\end{itemize}

Therefore, in every possible case, we have a contradiction. To see that the classes are distinct it is enough to compute the Cartan matrices for each block, with the exception of (i) and (a) whose Cartan matrices are identical, for which we note that the number of irreducible characters $k(\cO((C_2)^4 \rtimes 3^{1+2}_+)b) \neq k(\cO D)$. 

The fact that the isomorphism class of an elementary abelian defect group is invariant under Morita equivalences is Corollary 1.6 in \cite{lin18p}.
\end{proof}
\begin{remark} \normalfont
A block Morita equivalent to (a) cannot be a principal block, since principal blocks with one simple module are nilpotent by \cite[6.13]{navarro}, but $k($(a)$)=16$. \\ Moreover, from the main theorem of \cite{kul85} together with Lemma 2.5 in \cite{puigusami93}, if a block of $\cO G$ with defect group $D$ and inertial quotient $E$ is principal then its Brauer correspondent is Morita equivalent to a non-twisted group algebra $\cO (D \rtimes E)$. Then in particular if Brou\'e's abelian defect group conjecture holds for blocks Morita equivalent to (b) or (c) then they cannot be principal blocks of any finite group, since the twisted group algebra (b) is distinguished from any non-twisted algebra of type $\cO(D\rtimes E)$ from its pair of values $k((b)), l((b))$.
\end{remark}

We can now improve Proposition \ref{sambalecalcs} with the exact values. As a corollary, we can immediately show that in most of our Morita equivalence classes the inertial quotient is preserved, as it is determined by the pair of values $(k(B),l(B))$. The only exceptions are two pairs of inertial quotients in which the numerical invariants coincide, where we give a partial result.
\begin{corollary} \label{truesambalecalcs}
Let $B$ be a block of $\cO G$ where $G$ is a finite group, with defect group $D=(C_2)^5$ and inertial quotient $E$. Then one of the following holds: 
\begin{itemize}\itemsep0em
\item $E= \{1\}$ and $k(B)=32$, $l(B)=1$.
\item $E \cong C_3$, $|C_D(E)|=8$ and $k(B)=32$, $l(B)=3$.
\item $E \cong C_3$, $|C_D(E)|=2$ and $k(B)=16$, $l(B)=3$.
\item $E \cong C_5$, and $k(B)=16$, $l(B)=5$.
\item $E \cong C_7$, and $k(B)=32$, $l(B)=7$.
\item $E \cong C_3 \times C_3$, and $k(B)=32$, $l(B)=9$ or $k(B)=16$, $l(B)=1$.
\item $E \cong C_{15}$, and $k(B)=32$, $l(B)=15$.
\item $E \cong C_7 \rtimes C_3$, $|C_D(E)|=4$, and $k(B)=32$, $l(B)=5$.
\item $E \cong C_{21}$, and $k(B)=32$, $l(B)=21$.
\item $E \cong C_7 \rtimes C_3$, $|C_D(E)|=1$ and $k(B)=16$, $l(B)=5$.
\item $E \cong C_{31}$, and $k(B)=32$, $l(B)=31$ .
\item $E \cong (C_7 \rtimes C_3) \times C_3$, and either $k(B)=32$, $l(B)=15$ or $k(B)=16$, $l(B)=7$.
\item $E \cong C_{31} \rtimes C_5$, and $k(B)=16$, $l(B)=11$.
\end{itemize}
In particular, Morita equivalent blocks have isomorphic inertial quotients with the same action on $D$, except possibly when the Morita equivalence class is (v), (xi) or (xii) in Theorem \ref{maintheorem}.
\end{corollary}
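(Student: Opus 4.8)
The plan is to exploit the fact that Theorem~\ref{maintheorem} already gives a complete list of the $34$ Morita equivalence classes together with an explicit representative of each. Both $k(B)$ and $l(B)$ are invariant under Morita equivalence: a Morita equivalence over $\cO$ specialises to one over $K$ and one over $k$, and over the splitting field $K$ of characteristic zero the number of simple $KB$-modules equals $k(B)$, while $l(B)$ is the number of simple $kB$-modules. So the first step is simply to compute the pair $(k,l)$ for each representative. These computations are routine: for a block of $\cO(D\rtimes E)$ one reads off $l$ as the number of $2$-regular classes of $E$ and $k$ via Clifford theory applied to $\widehat{D}$; for a direct product one multiplies the invariants of the factors; and the quasisimple cases $\SL_2(8)$, $\SL_2(16)$, $\SL_2(32)$, $J_1$, as well as the non-principal (twisted) blocks (a), (b), (c), are handled from their known character tables and decomposition matrices, taking care to use the correct block rather than the whole group algebra. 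Comparing the resulting values with the list of possibilities in Proposition~\ref{sambalecalcs} prunes, for each inertial quotient, the admissible pairs $(k,l)$ to exactly those actually realised; two pairs survive for $E\cong C_3\times C_3$ and for $E\cong (C_7\rtimes C_3)\times C_3$ precisely because Theorem~\ref{maintheorem} contains classes realising both. This yields the refined list in the statement.

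For the final assertion I would inspect the refined list and observe that the map sending an inertial quotient together with its action on $D$ to the pair $(k(B),l(B))$ is injective, with the sole exceptions that $(16,5)$ arises from both $E\cong C_5$ and $E\cong (C_7\rtimes C_3)_2$, and $(32,15)$ arises from both $E\cong C_{15}$ and $E\cong (C_7\rtimes C_3)\times C_3$. Since $(k,l)$ is a Morita invariant, the inertial quotient and its action are therefore constant on every Morita equivalence class whose associated pair is not one of these two, which already covers all classes except those with $(k,l)\in\{(16,5),(32,15)\}$, namely (v), (xx), (xxi) and (xi), (xii), (xxiv)--(xxix).

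To remove (xx), (xxi) and (xxiv)--(xxix) from the list of exceptions, I would use that the competing inertial quotients in those cases are the cyclic groups $C_5$ and $C_{15}$, for which $C_D(E)\neq 1$. I claim a block with defect group $(C_2)^5$ and inertial quotient $C_5$ (respectively $C_{15}$) is necessarily Morita equivalent to (v) (respectively to (xi) or (xii)): re-examining the proof of Theorem~\ref{maintheorem}, such an inertial quotient forces the layer $E(G)$ to be trivial, or, for $C_{15}$, to be $\SL_2(16)$ up to a central $2'$-extension, since no other quasisimple group occurring in Proposition~\ref{quasisimpleblocks} carries a block whose inertial quotient realises $C_5$ or exactly $C_{15}$; in the trivial-layer case $D=O_2(G)$ is normal, so Theorem~\ref{normalD} together with the triviality of the Schur multipliers of $C_5$ and $C_{15}$ identifies the Morita class. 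Consequently a block in class (xx), (xxi) or (xxiv)--(xxix) cannot have inertial quotient $C_5$ or $C_{15}$, as that would place it in the distinct Morita class (v), (xi) or (xii); its inertial quotient is therefore the stated non-cyclic one. For the classes (v), (xi), (xii) themselves the competing quotient $(C_7\rtimes C_3)_2$, respectively $(C_7\rtimes C_3)\times C_3$, satisfies $C_D(E)=1$, no analogous reduction to defect group $(C_2)^4$ is available, and it cannot be excluded; these remain genuine exceptions.

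The hard part will be the case analysis in the last paragraph: one must check carefully that a cyclic inertial quotient with nontrivial fixed space really does constrain the generalised Fitting subgroup enough to pin down the Morita class, which amounts to controlling the interaction between the inertial quotient of $B$ and those of the blocks of the components appearing in Proposition~\ref{quasisimpleblocks}. This is implicit in the proof of Theorem~\ref{maintheorem} but needs to be isolated and stated explicitly here.
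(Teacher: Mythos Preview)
Your first two steps---computing $(k(B),l(B))$ for each representative in Theorem~\ref{maintheorem} and observing that this pair determines the inertial quotient and its action except for $(16,5)$ and $(32,15)$---coincide with what the paper does. The divergence is in how you handle the exceptional pairs, i.e.\ how you show that a block with $E\cong C_5$ (resp.\ $C_{15}$) cannot lie in class (xx), (xxi) (resp.\ (xxiv)--(xxix)).

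The paper's route is short and computational: if $E$ is cyclic then Watanabe's theorem \cite{wa05} gives a perfect isometry between $B$ and $\cO(D\rtimes E)$, and a perfect isometry forces an isomorphism of centres. A Magma computation then separates the classes: $Z(\text{(v)})$ has Loewy length $4$ while $Z(\text{(xx)})$ and $Z(\text{(xxi)})$ have Loewy length $3$; and $\dim J^2(Z(\text{(xi)}))=15$ while the same invariant is $21$ for (xxiv)--(xxix). Since Morita equivalence also preserves the centre, a block with $E\cong C_5$ cannot be Morita equivalent to (xx) or (xxi), and similarly for $C_{15}$.

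Your alternative---rerunning the proof of Theorem~\ref{maintheorem} under the extra hypothesis $E\cong C_5$ or $C_{15}$ to constrain $E(G)$---has two genuine obstacles. First, that proof works with a minimal counterexample and reduces via Corollary~\ref{kulshammerpuigcorollary}; but Corollary~\ref{kulshammerpuigcorollary} is stated only as a Morita equivalence, so you cannot assume the inertial quotient survives the reduction to the minimal case. Second, even at the minimal stage, your assertion that ``no other quasisimple group \ldots\ carries a block whose inertial quotient realises $C_5$'' addresses the inertial quotient of the block $b_1$ of a component $L_1$, not the inertial quotient $E$ of $B$; relating the two requires a containment of fusion systems $\mathcal{F}(b_1)\subseteq\mathcal{F}(B)|_{D\cap L_1}$ together with a representation-theoretic argument that $C_5$ cannot act nontrivially on $(C_2)^k$ for $k\le 3$, neither of which you have set up. These gaps are not insurmountable, but closing them is substantially more work than the two Magma computations the paper uses, and the paper's approach has the virtue of bypassing the reduction machinery entirely.
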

\begin{proof}
Proposition \ref{sambalecalcs} implies that $E$ is among the groups listed above, and that moreover for each fixed pair $(k(B), l(B))$ of blocks appearing in Theorem \ref{maintheorem} there is a single possible isomorphism class for $E$, with two exceptions: the pairs of values $(k(B),l(B))$ that occur in two distinct isomorphism classes for $E$ are $(16,5)$ and $(32,15)$: \begin{itemize}
\item Let $B$ be such that $k(B)=16$, $l(B)=5$. Then $B$ is Morita equivalent to one of (v), (xx), (xxi) in Theorem \ref{maintheorem}.
Suppose that $E \cong C_5$. Then from the main theorem of \cite{wa05} $B$ is perfectly isometric to (v). In particular then $B$ is Morita equivalent to (v), since this block is not perfectly isometric to (xx) or (xxi): in fact, a perfect isometry implies an isomorphism of the centers, but the Loewy lengths of the center, computed with Magma \cite{magma}, is respectively $4$ for (v) and $3$ for (xx) and (xxi). Then if $B$ is Morita equivalent to (xx) or (xxi), $e(B) \neq 5$, so every block in (xx) and (xxi) has inertial quotient $E=C_7 \rtimes C_3$ with $|C_D(E)|=1$.
\item Let $B$ be a block of $\cO G$ with $k(B)=32$, $l(B)=15$. Then $B$ is Morita equivalent to one of (xi), (xii), (xxiv), (xxv), (xxvi), (xxvii), (xxviii), (xxix) in Theorem \ref{maintheorem}.
Suppose that $E \cong C_{15}$. Then from the main theorem of \cite{wa05} $B$ is perfectly isometric to (xi). In particular then $B$ is Morita equivalent to (xi) or (xii), since (xi) is not perfectly isometric to any of the other possibilities for the Morita equivalence class: in fact, as before, a perfect isometry implies an isomorphism of the centers. In this case all the centers have Loewy length $3$ but, as computed with Magma \cite{magma}, the dimension of $J^2(Z($(xi)$))$ is $15$, while it is $21$ for each representative between (xxiv)-(xxix). In particular then if $B$ is Morita equivalent to any block in (xxiv)-(xxix) then $E \not\cong C_{15}$, so every block in (xxiv)-(xxix) has inertial quotient $E=(C_7 \rtimes C_3) \times C_3$. \qedhere
\end{itemize}
\end{proof}
At the moment we are unable to show that an arbitrary block with defect group $(C_2)^5$ and inertial quotient $(C_7 \rtimes C_3)_2$ is not Morita equivalent to (v), and that a block with defect group $(C_2)^5$, inertial quotient $(C_7 \rtimes C_3) \times C_3$ and $15$ simple modules is not Morita equivalent to (xi) or (xii), so these Morita equivalence classes could contain blocks with different inertial quotients. However, we want to point out that any example of such blocks would provide a counterexample to Brou\'e's abelian defect group conjecture (hence, in particular, it would need to be a nonprincipal block).

In \cite{ea17}, to show that Brou\'e's conjecture holds for blocks with defect group $D=(C_2)^4$ the author implicitly uses Proposition 6.10.10 in \cite{lin18}, and the fact that Alperin's weight conjecture holds for blocks with defect group $D$ by Proposition 13.4 in \cite{sam14}. At the moment, we do not have the analogous result for $(C_2)^5$, so Brou\'e's abelian defect group conjecture will be dealt with in a subsequent paper.

We highlight, however, that by Proposition \label{truesambalecalcs} and Theorem 4.36 in \cite{benroq} two principal blocks in the list of Theorem \ref{maintheorem} are derived equivalent if and only if they have the same inertial quotient with the same action on $D$, and the same number of simple modules $l(B)$. The nonprincipal blocks in that list cannot be derived equivalent to any other block in the list, because they have different values for the pair $(k(B),l(B))$. We do not prove here that the blocks labeled as (b) and (c) are derived equivalent.

\section{Harada's conjecture}
We denote as $G^0$ the set of $p$-regular elements of $G$. Harada's conjecture states that, for a block $B$ of a finite group $G$, if a nonempty $J \subseteq \Irr_K(B)$ is such that:
\begin{equation}
\sum_{\chi \in J} \chi(1)\chi(g)=0 \quad \forall g \in G \setminus G^0 \tag{$\dagger$}
\end{equation}
then $J=\Irr(B)$.

Note that if a proper subset $J$ of $\Irr(B)$ satisfies Harada's conjecture, then the complement $\Irr(B) \setminus J$ also does. Lemma 1 in \cite{sam18} shows that the property ($\dagger$) above is equivalent to the existence of a vector $v \in \mathbb{Z}^{l(B)}$ such that for every row $d_\chi$ of the decomposition matrix of $B$ it holds that $(d_\chi, v) = \chi(1)$ if $\chi \in J$ and $0$ if $\chi \not\in J$. This implies that, if a block satisfies Harada's conjecture, then any other block Morita equivalent to it also does. Then, in particular, it is enough to prove it for each representative determined in Theorem \ref{maintheorem}. 

\sloppy We have checked each class computationally using Magma \cite{magma}, computing $\sum_{\chi \in J} \chi(1)\chi(g)$ on all $2$-singular elements for each subset $J$ with less than $\lfloor \frac{k(B)}{2}\rfloor$ elements, and Harada's conjecture holds for each of the classes and, hence, for every block with defect group $(C_2)^5$.

\section*{Acknowledgements}
This paper is part of the work done by the author during his PhD at the University of Manchester, supported by a Manchester Research Scholar Award and a President's Doctoral Scholar Award.\\
The author thanks Charles Eaton, his Ph.D supervisor, for his constant support, for many helpful discussions and for his careful reading of the manuscript. The author also thanks Elliot McKernon, Michael Livesey and Claudio Marchi for many helpful discussions, Kai Ino for helping with the translation of \cite{wa00} from Japanese to English, and Benjamin Sambale for his valuable comments on the proof of Proposition \ref{gblocalnoncyclic} and for pointing out a mistake in an earlier draft of Section $5$.

\end{document}